\font\bbbld=msbm10 scaled\magstephalf
\newcommand{\bM}{\bar{M}}
\newcommand{\bfR}{\hbox{\bbbld R}}
\newcommand{\bfS}{\hbox{\bbbld S}}
\newcommand{\tF}{\tilde{F}}
\newcommand{\ol}{\overline}
\newcommand{\ul}{\underline}
\newtheorem{theorem}{Theorem}[section]
\newtheorem{lemma}[theorem]{Lemma}
\newtheorem{proposition}[theorem]{Proposition}
 \theoremstyle{definition}
\theoremstyle{remark}
\newtheorem{remark}[theorem]{Remark}
\numberwithin{equation}{section}
\begin{document}

\title[fully nonlinear elliptic equations]
{
Second order estimates for Hessian type \\ fully nonlinear elliptic
equations\\ on Riemannian manifolds}
\author{Bo Guan}
\address{Department of Mathematics, Ohio State University,
         Columbus, OH 43210, USA}
\email{guan@math.osu.edu}
\author{Heming Jiao}
\address{Department of Mathematics, Harbin Institute of Technology,
         Harbin, 150001, China}
\email{jiaoheming@163.com}
\thanks{Research of the first author was supported in part by NSF grants
and MRI at OSU.
Research of the second author was supported in part
by a CRC graduate fellowship.}

\begin{abstract}

We derive  {\em a priori} estimates for second order derivatives of
solutions to a wide calss of fully nonlinear elliptic equations on 
Riemannian manifolds. The equations we consider naturally 
appear in geometric problems and other applications such as optimal transportation.
There are some fundamental assumptions in the literature to ensure the equations to be elliptic and that one can apply Evans-Krylov theorem 
once the $C^2$ estimates are derived. 
However, in previous work one needed extra assumptions which are
more technical in nature to overcome various difficulties. 
In this paper we are able to remove most of these technical assumptions. 
Indeed, we derive the estimates under conditions which are almost optimal, and prove existence results for the Dirichlet problem which 
are new even for bounded domains in Euclidean space. 
Moreover, our methods can be applied to other types 
of nonlinear elliptic and parabolic equations, including those on complex manifolds.

\end{abstract}

\maketitle

\section{Introduction}

\medskip

In the study of fully nonlinear elliptic or parabolic
equations, {\em a priori} $C^2$ estimates are crucial to the question of
existence and regularity of solutions. Such estimates are also important
in applications.
In this paper we are concerned with second derivative estimates
for solutions of the Dirichlet problem for
equations of the form
\begin{equation}
\label{3I-10}
f (\lambda (\nabla^2 u + A [u]))
  = \psi (x, u, \nabla u)
\end{equation}
on a Riemannian manifold $(M^n, g)$ of dimension $n \geq 2$ with smooth boundary $\partial M$,
with boundary condition
\begin{equation}
\label{3I-10d}
u = \varphi \;\; \mbox{on $\partial M$},
\end{equation}
where $f$ is a symmetric smooth function of $n$ variables,
$\nabla^2 u$ is the Hessian of $u$,
$A [u] = A (x, u, \nabla u)$ a $(0,2)$ tensor which may depend on $u$
and $\nabla u$,
and
\[ \lambda (\nabla^2 u + A [u]) = (\lambda_1 ,\ldots,\lambda_n) \]
denotes the eigenvalues of $\nabla^2 u + A [u]$ with respect to the
metric $g$.

Following the pioneer work of Caffarelli, Nirenberg and
Spruck~\cite{CNS3}
we assume $f$ to be defined in an open, convex, symmetric
cone $\Gamma \subset \mathbb{R}^{n}$ with vertex at the origin,
\[ \Gamma_n \equiv\{\lambda\in\mathbb{R}^{n}:
\mbox{each component } \lambda_{i}>0\} \subseteq \Gamma \neq \mathbb{R}^{n} \]
and to satisfy the standard structure conditions in the literature:
\begin{equation}
\label{3I-20}
f_{i} \equiv \frac{\partial f}{\partial \lambda_{i}} > 0
\mbox{ in }\Gamma,\  1 \leq i \leq n,
\end{equation}
\begin{equation}
\label{3I-30}
\mbox{$f$ is a concave function in $\Gamma$},
\end{equation}
and 
\begin{equation}
\label{3I-40}
\sup_{\partial \Gamma} f  \equiv \sup_{\lambda_0 \in \partial \Gamma}
                   \limsup_{\lambda \rightarrow \lambda_0} f (\lambda) \leq 0.
\end{equation}

We shall call a function $u \in C^{2}(\bM)$ {\em admissible} if
$\lambda (\nabla^{2} u + A [u]) \in \Gamma$.
By \eqref{3I-20}, equation \eqref{3I-10} is elliptic for admissible
solutions.

When $A =0$ or $A (x)$, the Dirichlet problem~\eqref{3I-10}-\eqref{3I-10d}
in $\bfR^n$ for was first studied by Ivochkina~\cite{Ivochkina80}
and Caffarelli, Nirenberg and Spruck~\cite{CNS3}, followed by
work in \cite{LiYY90}, \cite{Wang94}, \cite{Guan94}, \cite{Trudinger95},
\cite{TW99}, and \cite{CW01}, etc. Li~\cite{LiYY90} and Urbas~\cite{Urbas02}
studied equation~\eqref{3I-10} with $A = g$ on closed Riemannian manifolds;
see also \cite{Guan99a} where the Dirichlet
problem was treated for $A = \kappa u g$ ($\kappa$ is constant).

A critical issue to solve the Dirichlet problem for equation~\eqref{3I-10},
is to derive {\em a priori} $C^2$ estimates for admissible solutions.
By conditions \eqref{3I-20} and \eqref{3I-40},  equation~\eqref{3I-10} becomes
uniformly elliptic once $C^2$ estimates are established,
and one therefore
 obtains global $C^{2, \alpha}$ estimates using Evans-Krylov theorem
which crucially relies on the concavity condition \eqref{3I-30}.
From this point of view, conditions~\eqref{3I-20}-\eqref{3I-40} are
fundamental to the classical solvability of equation~\eqref{3I-10}.

In this paper we shall primarily focus on deriving {\em a priori} estimates
for second order derivatives.
In order to state our main results let us introduce some notations;
see also \cite{Guan12a}.

For $\sigma > 0$ let $\Gamma^{\sigma}
    = \{\lambda \in \Gamma: f (\lambda) > \sigma\}$
which we assume to be nonempty.
By assumptions~(\ref{3I-20}) and (\ref{3I-30}) the boundary of
$\Gamma^{\sigma}$,
$\partial \Gamma^{\sigma} = \{\lambda \in \Gamma: f (\lambda) = \sigma\}$ is
a smooth, convex and noncompact hypersurface in $\bfR^n$.
For $\lambda \in \Gamma$ we use
$T_{\lambda} = T_{\lambda} \partial \Gamma^{f (\lambda)}$ to denote
the tangent plane at $\lambda$ to the level surface
$\partial \Gamma^{f (\lambda)}$.

The following new condition is essential to our work in this paper:
\begin{equation}
\label{gj-I100}
\mbox{$\partial \Gamma^{\sigma} \cap
    T_{\lambda} \partial \Gamma^{f (\lambda)} $ is nonempty and compact},
\; \forall \, \sigma > 0, \; \lambda \in \Gamma^{\sigma}.
\end{equation}

Throughout the paper we assume $\bM := M \cup \partial M$ is
compact and $A [u]$ is smooth on $\bM$ for $u \in C^{\infty} (\bM)$,
$\psi \in C^{\infty} (T^*\bM \times \bfR)$ (for convenience we shall write
$\psi = \psi (x, z, p)$ for $(x, p) \in T^*\bM$ and $z \in \bfR$ though),
$\psi > 0$,
$\varphi \in C^{\infty} (\partial M)$.
Note that for fixed $x \in \bM$, $z \in \bfR$ and $p \in T^*_x M$,
\[ A (x, z, p): T^*_x M \times T^*_x M \rightarrow \bfR \]
is a symmetric bilinear map. We shall use the notation
\[ A^{\xi \eta} (x, \cdot, \cdot) := A (x, \cdot, \cdot) (\xi, \eta), \;\;
\xi,  \eta \in T^*_x M \]
and, for a function $v \in C^2 (M)$,  $A [v] := A (x, v, \nabla v)$,
$A^{\xi \eta} [v] := A^{\xi \eta} (x, v, \nabla v)$.

\begin{theorem}
\label{gj-th2}
Assume, in addition to \eqref{3I-20}-
\eqref{gj-I100}, that
\begin{equation}
\label{A2}
\mbox{$-\psi (x,z,p)$ and $A^{\xi \xi} (x,z,p)$ are concave
   in $p$},
\end{equation}
\begin{equation}
\label{A4}
- \psi_z, \; A^{\xi \xi}_{z}  \geq 0,
\;\; \forall \, \xi \in T_x M.
\end{equation}
and that there exists
an admissible subsolution $\ul{u} \in C^2 (\bM)$ satisfying
\begin{equation}
\label{3I-11s}
\left\{
\begin{aligned}
f (\lambda (\nabla^2 \ul{u} + A [\ul u]))
    \,& \geq \psi (x, \ul u, \nabla \ul u) \;\; \mbox{in $\bM$}, \\
    \ul u \,& = \varphi \;\; \mbox{on $\partial M$}.
    \end{aligned} \right.
\end{equation}
Let $u \in C^4 (\bM)$ be an admissible solution of
equation~\eqref{3I-10} with $u \geq \ul u$ on $\bM$.
Then
\begin{equation}
\label{hess-a10}
\max_{\bM} |\nabla^2 u| \leq
 C_2 \big(1 + \max_{\partial M}|\nabla^2 u|\big)
\end{equation}
where $C_2 > 0$ depends on $|u|_{C^1 (\bM)}$ and $|\ul u|_{C^2 (\bM)}$.
In particular, if $M$ is closed, i.e. $\partial M = \emptyset$, then
\begin{equation}
\label{hess-a10c}
|\nabla^2 u| \leq C_2 \;\; \mbox{on $M$}.
\end{equation}

Suppose that $u$ also satisfies the boundary condition
\eqref{3I-10d} and that
\begin{equation}
\label{3I-45}
\sum f_{i} (\lambda) \lambda_{i} \geq 0,
     \; \forall \, \lambda \in \Gamma.
\end{equation}
Then there exists $C_3 > 0$ depending on
$|u|_{C^1 (\bM)}$, $|\ul u|_{C^2 (\bM)}$ and
$|\varphi|_{C^4 (\partial M)}$ such that
\begin{equation}
\label{hess-a10b}
\max_{\partial M}|\nabla^2 u| \leq C_3.
\end{equation}
\end{theorem}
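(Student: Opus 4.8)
The plan is to establish the global bound \eqref{hess-a10} first, obtain \eqref{hess-a10c} as the special case $\partial M=\emptyset$, and then prove the boundary bound \eqref{hess-a10b}; together these give the $C^2$ estimate. Write $\mathfrak{w}[u]:=\nabla^2u+A[u]$, let $F$ be defined by $F(\nabla^2u+A[u])=f(\lambda(\nabla^2u+A[u]))$, and let $F^{ij}$ be the derivative of $F$ in the $(i,j)$ matrix entry evaluated at $\mathfrak{w}[u]$; by \eqref{3I-20} and \eqref{3I-30}, $(F^{ij})$ is positive definite and $F$ is concave. For \eqref{hess-a10} I would bound the largest eigenvalue $\lambda_1$ of $\mathfrak{w}[u]$ by testing $W=\log\lambda_1(\mathfrak{w}[u])+\phi(|\nabla u|^2)+a\,(u-\ul u)$, with $\phi$ increasing and $a>0$ chosen from $|u|_{C^1(\bM)}$ and $\ul u$; if the maximum of $W$ over $\bM$ is attained on $\partial M$ we are done (admissibility, via $\Gamma\subseteq\{\sum\lambda_i\ge0\}$, controls all eigenvalues once $\lambda_1$ is bounded), so assume it is interior, at $x_1$. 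In a local frame diagonalizing $\mathfrak{w}[u](x_1)$ with $\lambda_1=\mathfrak{w}_{11}$ maximal (treating $\lambda_1$ as smooth by the standard perturbation), $\nabla W(x_1)=0$ and $\nabla^2W(x_1)\le0$; contracting with $F^{ij}$, differentiating \eqref{3I-10} once and twice, commuting covariant derivatives, and using concavity \eqref{3I-30} (so $F^{ij,kl}\nabla_1\mathfrak{w}_{ij}\nabla_1\mathfrak{w}_{kl}\le0$) together with \eqref{A2}, \eqref{A4} and the subsolution \eqref{3I-11s} should yield a differential inequality forcing $\lambda_1(x_1)\le C$, hence \eqref{hess-a10}; taking $\partial M=\emptyset$ gives \eqref{hess-a10c}.

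For \eqref{hess-a10b} I would localize near an arbitrary $x_0\in\partial M$ in boundary-normal coordinates with $\partial M=\{x_n=0\}$ and treat the three blocks of $\nabla^2u(x_0)$ in turn. The pure tangential block follows directly from $u=\varphi$ on $\partial M$: for tangent directions $X,Y$ one has $\nabla^2u(X,Y)=\nabla^2\varphi(X,Y)-\mathrm{II}(X,Y)\,\partial_\nu(u-\varphi)$ on $\partial M$, so $|u_{\alpha\beta}(x_0)|\le C(|u|_{C^1(\bM)},|\varphi|_{C^2},\partial M)$. For the mixed derivatives, set $w=u-\ul u\ge0$ (vanishing on $\partial M$), let $\mathcal{L}=F^{ij}(\nabla_i\nabla_j+A^{ij}_{p_k}\nabla_k)-\psi_{p_k}\nabla_k$ be the first-order linearization of \eqref{3I-10} at $u$, and build the barrier $\Phi^{\pm}=A_1v+A_2|x-x_0|^2\pm\nabla_\alpha(u-\varphi)$ on a half-ball $\Omega_\delta=B_\delta(x_0)\cap M$, where $v=w+t\,d-Nd^2$ and $d$ is the distance to $\partial M$ (with $\varphi$ extended to be independent of $x_n$); differentiating \eqref{3I-10} tangentially gives $|\mathcal{L}(\nabla_\alpha(u-\varphi))|\le C(1+\sum F^{ii})$, and using concavity of $F$, the subsolution inequality, \eqref{A2} and \eqref{A4} one checks that $v\ge0$ and $\mathcal{L}v\le-\epsilon(1+\sum F^{ii})$ in $\Omega_\delta$ for suitable $t,\delta,\epsilon>0$ and large $N$. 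Choosing $A_2$ and then $A_1$ large makes $\mathcal{L}\Phi^{\pm}\le0$ in $\Omega_\delta$ and $\Phi^{\pm}\ge0$ on $\partial\Omega_\delta$; since $\Phi^{\pm}(x_0)=0$, the maximum principle and the inner-normal derivative at $x_0$ give $|u_{\alpha n}(x_0)|\le C$. To bound $u_{nn}(x_0)$ from above (the lower bound coming from $\lambda(\mathfrak{w}[u])\in\Gamma$, the controlled entries and $\Gamma\ne\bfR^n$), suppose $\mathfrak{w}_{nn}[u](x_0)\to\infty$ along a sequence of boundary points; the tangential block and mixed entries staying bounded, $n-1$ eigenvalues of $\mathfrak{w}[u](x_0)$ remain bounded while the last tends to $+\infty$, and then the concavity inequality $f(\lambda(\mathfrak{w}[u]))-f(\lambda(\mathfrak{w}[\ul u]))\ge F^{ij}(\mathfrak{w}[u])(\mathfrak{w}[u]-\mathfrak{w}[\ul u])_{ij}$, the subsolution property, the sign condition \eqref{3I-45} (to absorb the term $F^{nn}\mathfrak{w}_{nn}[u]$) and \eqref{gj-I100} (controlling the location of $\lambda(\mathfrak{w}[u])$ relative to $\partial\Gamma^{\psi_0}$, $\psi_0=\inf_{\bM}\psi>0$) should force $f(\lambda(\mathfrak{w}[u]))>\psi$, contradicting the equation; hence $|u_{nn}(x_0)|\le C_3$ and \eqref{hess-a10b} follows.

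The main obstacle throughout — and precisely what the new hypothesis \eqref{gj-I100} is meant to overcome — is the control of the degenerate (near-null) directions of the positive matrix $(F^{ij})$. In the global estimate this is the difficulty of absorbing the third-order terms $\sum_iF^{ii}(\nabla_i\mathfrak{w}_{11})^2/\lambda_1$ when the concavity gain $-F^{ij,kl}\nabla_1\mathfrak{w}_{ij}\nabla_1\mathfrak{w}_{kl}$ and the eigenvalue-gap contributions are small; in the mixed-derivative barrier it is the need for a lower bound on $F^{ij}\nabla_id\,\nabla_jd$ relative to $\sum F^{ii}$; and in the double-normal step it is making the contradiction quantitative with no further structural assumptions on $f$. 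In all three, the compactness of $\partial\Gamma^{\sigma}\cap T_\lambda\partial\Gamma^{f(\lambda)}$ supplies the missing control through a dichotomy — either the relevant part of $(F^{ij})$ is nondegenerate enough (equivalently, $\sum F^{ii}$ is large enough) to run the estimate directly, or \eqref{gj-I100} together with the admissible subsolution forces the concavity comparison between $\lambda(\mathfrak{w}[u])$ and $\lambda(\mathfrak{w}[\ul u])$ to produce the extra negativity that closes the argument. Establishing this dichotomy cleanly, and then carefully tracking the curvature terms produced by commuting covariant derivatives and the lower-order terms arising from the $(u,\nabla u)$-dependence of $A$ and $\psi$, is where the real work lies.
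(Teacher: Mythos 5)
Your overall plan mirrors the paper's: a maximum principle argument for the global bound with a test function of the form $\log\lambda_1 + (\text{gradient term}) + (\text{subsolution term})$, then separate treatment of the tangential, mixed, and double-normal second derivatives at the boundary using the second fundamental form, a barrier, and a third argument respectively. That skeleton is right, and your closing paragraph correctly identifies where \eqref{gj-I100} is needed. But there are gaps of varying severity.

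\emph{Sign of the subsolution perturbation.} You test with $W = \log\lambda_1 + \phi(|\nabla u|^2) + a(u-\ul u)$, $a>0$. At an interior maximum, contracting $\nabla^2W\leq 0$ against $F^{ij}$ gives, schematically, $\mathcal{L}(\log\lambda_1 + \phi) + a\,\mathcal{L}(u-\ul u)\leq 0$. The entire point of the subsolution is to supply the large positive contribution $\theta\sum F^{ii} + \theta$ from $\mathcal{L}(\ul u - u)$ via Proposition~\ref{gj-lem3}. Your sign gives $a\,\mathcal{L}(u - \ul u) = -a\,\mathcal{L}(\ul u - u)\leq -a\theta\sum F^{ii} - a\theta$, which helps the inequality hold rather than producing a contradiction. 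The paper takes $\log U_{11} + \tfrac{\delta}{2}|\nabla u|^2 + b(\ul u - u)$, so the sign should be $+a(\ul u - u)$.

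\emph{Mixed normal-tangential derivatives: the barrier is missing a term.} Your barrier $\Phi^{\pm} = A_1 v + A_2|x-x_0|^2 \pm\nabla_\alpha(u-\varphi)$ cannot close. The bound \eqref{hess-E170} is $|\mathcal{L}\nabla_\alpha(u-\varphi)|\leq C(1 + \sum f_i|\lambda_i| + \sum f_i)$, and $\sum f_i|\lambda_i|$ is \emph{not} controlled by $\sum F^{ii}$; the negative term $-A_1\varepsilon(1+\sum F^{ii})$ coming from Lemma~\ref{ma-lemma-E10} therefore cannot absorb it. The paper's barrier $\varPsi$ additionally contains $-A_3\sum_{\beta<n}|\nabla_\beta(u-\varphi)|^2$, whose linearization produces $A_3\sum_{\beta<n}F^{ij}U_{i\beta}U_{j\beta}$. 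By \eqref{eq5.170} this dominates $c_0\sum_{i\neq r}f_i\lambda_i^2$, and by the interpolation inequality \eqref{C210} (which is exactly where \eqref{3I-45} enters) this absorbs $\sum f_i|\lambda_i|$. Without this term the maximum principle step fails.

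\emph{Double normal: the contradiction argument does not work as stated.} You propose: if $U_{nn}(x_0)\to\infty$ along boundary points while the tangential block stays bounded, the concavity inequality $f(\lambda(\mathfrak{w}[u])) - f(\lambda(\mathfrak{w}[\ul u])) \geq F^{ij}(\mathfrak{w}[u])(\mathfrak{w}[u]-\mathfrak{w}[\ul u])_{ij}$, together with \eqref{3I-45}, the subsolution, and \eqref{gj-I100}, should force $f>\psi$. But the left side equals $\psi[u] - f(\lambda(\mathfrak{w}[\ul u]))$, a bounded quantity, while the right side equals $\sum f_i\lambda_i - F^{ij}\ul U_{ij}\geq -C\sum F^{ii}$ by \eqref{3I-45} and $|\ul U|\leq C$; this is just a (true and useless) lower bound, and no contradiction ensues. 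The genuine obstruction is that the equation $f=\psi$ is compatible with $U_{nn}$ unbounded precisely when $\tilde m := \min_{\partial M}\lim_{R\to\infty}\big(f(\lambda'[U_{\alpha\beta}],R)-\psi\big)=0$. Showing $\tilde m>0$ quantitatively is the hard content of Section~\ref{gj-B}. The paper does this by forming the concave limit functional $\tF[\cdot]$ with a supporting linear functional $\tF^{\alpha\beta}_0$, building $\varPhi = -\eta\nabla_n(u-\ul u) + H[u] + Q$ with $\varPhi(x_0)=0$ and $\varPhi\geq 0$ on $\partial M$ near $x_0$, applying the maximum principle with $\varPsi$ to obtain $\nabla_n\varPhi(x_0)\geq -C$, and then using the delicate signs (concavity of $H$ in $p_n$, $\nabla_n(u-\ul u)\geq 0$, $\nabla_{nn}(u-\ul u)\geq 0$) and the uniform lower bound $\eta(x_0)-\int_0^1 H_{p_n}[u^t]\,dt\geq\epsilon_1\tilde c>0$ to extract $\nabla_{nn}u(x_0)\leq C$. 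This is where the $(u,\nabla u)$-dependence of $A$ and $\psi$ is paid for; a one-sentence limit/contradiction argument does not substitute for it.
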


The assumption~\eqref{gj-I100} excludes linear elliptic equations but
is satisfied by a very general class of functions $f$.
In particular, Theorem~\ref{gj-th2}
applies to $f = \sigma_k^{\frac{1}{k}}$, $k \geq 2$ and
$f = (\sigma_k/\sigma_l)^{\frac{1}{k-l}}$, $1 \leq l <  k \leq n$ in
the Garding cone
\[  \Gamma_k = \{\lambda \in \bfR^n: \sigma_j (\lambda) > 0 ,\;
\mbox{$\forall$ $1 \leq j \leq k$}\}, \]
where $\sigma_k$ is the $k$-th elementary symmetric
function; see \cite{Guan12a}.
It is also straightforward to verify that the function $f = \log P_k$
satisfies assumptions~\eqref{3I-20}-
\eqref{gj-I100} where
\[ P_k (\lambda) := \prod_{i_1 < \cdots < i_k}
(\lambda_{i_1} + \cdots + \lambda_{i_k}), \;\; 1 \leq k \leq n\]
defined in the cone
\[ \mathcal{P}_k : = \{\lambda \in \bfR^n:
      \lambda_{i_1} + \cdots + \lambda_{i_k} > 0\}. \]

When both $A$ and $\psi$ are independent of $u$ and $\nabla u$,
Theorem~\ref{gj-th2} was proved by the first
author~\cite{Guan12a} under the weaker assumption
that \eqref{gj-I100} holds for all $\lambda \in \partial \Gamma^{\sigma}$,
which improves previous results
due to Caffarelli, Nirenberg and Spruck~\cite{CNS3}, Li~\cite{LiYY90},
Trudinger~\cite{Trudinger95}, Urbas~\cite{Urbas02} and the first
author~\cite{Guan94}, etc. Clearly, the two conditions are equivalent
if $f$ is homogeneous or more generally
$f (t \lambda ) = h (t) f (\lambda)$, $\forall \, t > 0$  in $\Gamma$
for some positive function $h$.

Some of the major difficulties in deriving the estimates
\eqref{hess-a10} and \eqref{hess-a10b} are caused
by the presence of curvature and lack of good globally defined functions
on general Riemannian manifolds, and by the arbitrary geometry of boundary.
As in \cite{Guan12a}, we make crucial use of the subsolution
in both estimates to overcome the difficulties. (We shall also use
key ideas from \cite{CNS3}, \cite{LiYY90},
\cite{Trudinger95}, \cite{Urbas02}, etc. all of which contain significant
contributions to the subject.)
However, the proofs in the
current paper are far more delicate than those in \cite{Guan12a},
especially for the boundary estimates.
The core of our approach is the following inequality

\begin{theorem}
\label{3I-th3}
Assume that \eqref{3I-20}, \eqref{3I-30} and
\eqref{gj-I100} hold.
Let $K$ be a compact subset of $\Gamma$ and 
$\sup_{\partial \Gamma}  f < a \leq b < \sup_{\Gamma}  f$.
There exist positive constants
$\theta = \theta (K, [a,b])$ and $R = R (K, [a,b])$ such that for any 
$\lambda \in \Gamma^{[a,b]} = \ol{\Gamma^a} \setminus \Gamma^b$, 
when $|\lambda| \geq R$,
\begin{equation}
\label{3I-100a'}
\sum f_i (\lambda) (\mu_{i} - \lambda_i) \geq
     \theta + \theta \sum f_i (\lambda) + f (\mu) - f (\lambda),
\;\; \forall \, \mu \in K.
\end{equation}
\end{theorem}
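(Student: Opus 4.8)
The plan is to exploit the geometric content of condition \eqref{gj-I100}: for $\lambda \in \Gamma^\sigma$ the intersection $\partial\Gamma^\sigma \cap T_\lambda$ is nonempty and compact. Since $\partial\Gamma^{f(\lambda)}$ is smooth and convex with outer normal proportional to $Df(\lambda)$, concavity of $f$ gives $\sum f_i(\lambda)(\mu_i - \lambda_i) \geq f(\mu) - f(\lambda)$ for every $\mu$, so the content of \eqref{3I-100a'} is the \emph{quantitative} gain $\theta + \theta\sum f_i(\lambda)$ when $|\lambda|$ is large. Normalizing, write $\sum f_i(\lambda)(\mu_i - \lambda_i) - (f(\mu) - f(\lambda)) = \left(\sum f_i\right)\left[\sum \hat f_i(\lambda)(\mu_i - \lambda_i) - \frac{f(\mu)-f(\lambda)}{\sum f_i}\right]$ where $\hat f_i = f_i/\sum f_j$ is the unit vector in the direction of $Df(\lambda)$; then it suffices to show that the bracketed quantity is bounded below by a positive constant $\theta'$ once $|\lambda| \geq R$, because the $f(\mu)-f(\lambda)$ correction term can be absorbed separately (see below). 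Geometrically, $\sum \hat f_i(\lambda)(\mu_i - \lambda_i)$ is the signed distance from $\mu$ to the tangent hyperplane $T_\lambda$, measured toward the side into which $\Gamma^{f(\lambda)}$ curves; we must show this distance stays uniformly positive.

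The key step is a compactness/contradiction argument. Suppose not: there are $\lambda^{(k)} \in \Gamma^{[a,b]}$ with $|\lambda^{(k)}| \to \infty$ and $\mu^{(k)} \in K$ with the signed distance from $\mu^{(k)}$ to $T_{\lambda^{(k)}}$ tending to $0$ (or negative). Passing to subsequences, $\mu^{(k)} \to \mu^* \in K$, $f(\lambda^{(k)}) \to \sigma^* \in [a,b]$, and the unit normals $\hat f(\lambda^{(k)}) \to \nu^*$ for some unit vector $\nu^*$; by \eqref{3I-20}, $\nu^*$ has nonnegative entries, and since the $\lambda^{(k)}$ escape to infinity inside $\Gamma$ one checks (using convexity of the $\Gamma^\sigma$ and that $\partial\Gamma^\sigma$ is noncompact with recession directions in $\overline\Gamma$) that $\nu^*$ is a limiting normal direction of $\partial\Gamma^{\sigma^*}$ ``at infinity.'' The tangent hyperplanes $T_{\lambda^{(k)}}$ then converge, after recentering, to a hyperplane $H$ with normal $\nu^*$ that supports $\overline{\Gamma^{\sigma^*}}$ and contains a half-line in $\partial\Gamma^{\sigma^*}$ — i.e. $\partial\Gamma^{\sigma^*} \cap H$ is unbounded. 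I claim this forces some $\partial\Gamma^\tau \cap T_{\lambda}$ to be noncompact for an honest $\lambda$, contradicting \eqref{gj-I100}: indeed a supporting hyperplane of the convex set $\overline{\Gamma^{\sigma^*}}$ whose contact set is unbounded is, by a limiting argument on nearby level sets and nearby base points (here one uses that the map $\lambda \mapsto T_\lambda$ is continuous and that \eqref{gj-I100} is assumed for \emph{all} $\sigma > 0$ and all $\lambda \in \Gamma^\sigma$), a tangent plane $T_{\lambda'}$ at some $\lambda' \in \Gamma^{\tau}$ with $\partial\Gamma^{\tau} \cap T_{\lambda'}$ unbounded — the desired contradiction. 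This yields a uniform lower bound $\theta'$ on the signed distance for $|\lambda| \geq R$, with $R$ and $\theta'$ depending only on $K$ and $[a,b]$ because all the limiting objects live in the compact data $K$, $[a,b]$.

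It remains to convert the normalized bound into \eqref{3I-100a'} and to handle the correction term. From $\sum \hat f_i(\lambda)(\mu_i - \lambda_i) \geq \theta'$ we get $\sum f_i(\lambda)(\mu_i - \lambda_i) \geq \theta' \sum f_i(\lambda)$. Since $\mu \in K$ and $f(\lambda) \in [a,b]$, the quantity $f(\mu) - f(\lambda)$ is bounded by a constant $C(K,[a,b])$; writing $\theta' \sum f_i = \frac{\theta'}{2}\sum f_i + \frac{\theta'}{2}\sum f_i$ and noting $\sum f_i(\lambda) \geq c_0(K,[a,b]) > 0$ on the compact-in-direction set of relevant gradients — this positivity is part of what the compactness argument delivers, or follows because $Df$ cannot vanish on $\Gamma^{[a,b]}$ — we can choose $\theta = \min\{\theta'/2, \, \theta' c_0 / (2(C+1))\}$, and possibly enlarge $R$, so that $\frac{\theta'}{2}\sum f_i(\lambda) \geq \theta + (f(\mu) - f(\lambda)) - \left(\sum f_i(\lambda) \cdot \text{[slack]}\right)$; more cleanly, absorb $f(\mu)-f(\lambda)$ using $\sum f_i(\lambda)(\mu_i-\lambda_i) \geq f(\mu)-f(\lambda)$ from concavity \emph{and} the strict gain, splitting $\mu-\lambda$ between the two inequalities via $\mu - \lambda = \tfrac12(\mu-\lambda) + \tfrac12(\mu-\lambda)$. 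The main obstacle is the middle step: making rigorous the passage ``to infinity'' in the family $\partial\Gamma^{f(\lambda^{(k)})}$ and identifying the limiting supporting hyperplane with an actual tangent plane $T_{\lambda'}$ so that \eqref{gj-I100} can be invoked; this requires care with the noncompactness of the level sets and with ensuring the limiting normal direction is not a ``trivial'' recession direction of $\Gamma$ itself (which is excluded because $f(\lambda) \geq a > \sup_{\partial\Gamma} f$ keeps $\lambda$ a definite distance into $\Gamma$ in the relevant sense).
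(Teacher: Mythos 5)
Your overall strategy — reduce to a uniform positivity claim about the normalized quantity, then argue by compactness and contradiction — is genuinely different from the paper's proof, which is a direct geometric construction: a uniform ``angle gap'' $\nu_\mu \cdot \nu_\lambda \leq \beta < 1$ (Lemmas~\ref{gj-C-lemma10}--\ref{gj-C-lemma30}, proven via a Gauss-map compactness argument close in spirit to yours), a ``spherical cap'' step producing a point $\zeta \in \mu + T_\lambda$ with $f(\zeta) \geq f(\mu) + 2\theta$ and $\sum f_i(\lambda)(\mu_i-\lambda_i)=\sum f_i(\lambda)(\zeta_i-\lambda_i)\geq f(\zeta)-f(\lambda)$, and then a shift $\mu \mapsto \mu - t\mathbf{1}$ over the enlarged compact set $K^\epsilon$. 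However, your proposal has two concrete gaps.

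First, in the contradiction step you assert that the tangent hyperplanes $T_{\lambda^{(k)}}$ converge to a hyperplane $H$ whose intersection with $\partial\Gamma^{\sigma^*}$ is unbounded, and that this can be transferred to an actual tangent plane $T_{\lambda'}$ at a finite point, contradicting \eqref{gj-I100}. This does not follow from the hypothesis of the contradiction: the convergence $T_{\lambda^{(k)}} \to H$ is controlled only near the compact set $K$ (through $\mu^{(k)}\to\mu^*$ and the shrinking signed distance), while $\lambda^{(k)}\in T_{\lambda^{(k)}}$ escapes to infinity. Since $|\nu^{(k)} - \nu^*|\cdot|\lambda^{(k)}|$ need not tend to zero, the points $\lambda^{(k)}$ may drift arbitrarily far from $H$, so $H$ need not contain any unbounded piece of $\partial\Gamma^{\sigma^*}$. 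Even granting that, $H$ is at best a supporting hyperplane of $\overline{\Gamma^{\sigma^*}}$; the additional argument that it coincides with $T_{\lambda'}$ for an honest $\lambda' \in \Gamma^\tau$ (so that \eqref{gj-I100} applies) is only sketched and is the heart of the matter. The paper instead extracts the uniform angle $\beta < 1$ directly and never needs a limiting supporting hyperplane at infinity.

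Second, your conversion to \eqref{3I-100a'} relies on the unproven claim that $\sum f_i(\lambda)\geq c_0(K,[a,b])>0$ on $\Gamma^{[a,b]}\cap\{|\lambda|\geq R\}$. Your justification (``$Df$ cannot vanish on $\Gamma^{[a,b]}$'') is not valid on a noncompact set; $Df$ nowhere zero does not give a uniform lower bound. Such a bound does follow from \eqref{3I-45} (since then $\sum f_i \geq (f(\mu_0)-b)/|\mu_0|_\infty$ by concavity), but \eqref{3I-45} is not among the hypotheses of Theorem~\ref{3I-th3}. Your alternative ``splitting $\mu-\lambda$'' does not produce the additive $\theta$ either: splitting the concavity inequality only weakens the multiplicative gain. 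The paper sidesteps the issue entirely: having proven $\sum f_i(\lambda)(\mu'_i-\lambda_i)\geq 2\theta+f(\mu')-f(\lambda)$ for all $\mu'\in K^\epsilon$, it sets $\mu'=\mu-t\mathbf{1}$ and reads off $\sum f_i(\lambda)(\mu_i-\lambda_i)\geq 2\theta+f(\mu-t\mathbf{1})-f(\lambda)+t\sum f_i$, with $f(\mu-t\mathbf{1})\geq f(\mu)-\theta$ by continuity of $f$ on the compact $K$. This produces both the additive $\theta$ and the $\theta\sum f_i$ without any lower bound on $\sum f_i$. You should adopt this shift device; without it the endgame of your argument is incomplete.
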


Perhaps the most important contribution of this paper is the new idea 
introduced in the proof of Theorem~\ref{3I-th3}. 
It will be further developed in our forthcoming work (e.g. \cite{Guan}). 
Note that by the concavity of $f$ we always have
\[ \sum f_i (\lambda) (\mu_{i} - \lambda_i) \geq f (\mu) - f (\lambda),
 \;\; \forall \, \mu, \lambda \in \Gamma. \]
It may also be worthwhile to point out that in Theorem~\ref{3I-th3} the
function $f$ is {\em not} assumed to be {\em strictly} concave.

In general, without assumption~\eqref{3I-11s} the Dirichlet problem
for equation~\eqref{3I-10} is not always solvable either if
$A$ or $\psi$ is dependent on $u$ and $\nabla u$,
or if there is no geometric restrictions to $\partial M$ being imposed.

The recent work of Guan, Ren and Wang~\cite{GRW13} shows that the convexity
assumption on $\psi$ in $\nabla u$ can not be dropped in general from
Theorem~\ref{gj-th2}. On the other hand, they
derived the second order estimates for $f = \sqrt{\sigma_2}$
without the assumption;
such estimates are also known to hold for the Monge-Amp\`ere equation
($f = \sigma_n^{1/n}$).
It seems an interesting open question whether it is still true for
$f = \sigma_k^{1/k}$, $3 \leq k < n$; see \cite{GRW13}.

Our motivation to study equation~\eqref{3I-10} comes in part
from its natural connection to geometric problems, and
the problem of optimal transportation which turns out to
be very closely related and to have interesting applications to
differential geometry.
The potential function of an optimal mass transport satisfies
a Monge-Amp\`ere type equation of form \eqref{3I-10}
where $f = \sigma_n^{1/n}$ and $A$ is determined by the cost function.
In \cite{MTW05} Ma, Trudinger and Wang introduced the following condition
to establish interior regularity for optimal transports:
there exists $c_0 > 0$ such that
\begin{equation}
\label{A3}
A^{\xi \xi}_{p_k p_l} (x,z,p) \eta_k \eta_l \leq - c_0 |\xi|^{2}|\eta|^{2}, \;
\forall \, \xi, \eta \in T_x M, \; \xi \perp \eta,
\end{equation}
now often referred as the MTW condition.  In this paper we derive the
following interior estimate which also extends Theorem 2.1 in
\cite{Trudinger06}.

\begin{theorem}
\label{gj-th4}
In addition to \eqref{3I-20}-\eqref{3I-40}, \eqref{A2} and \eqref{A3},
assume that
\begin{equation}
\label{gj-I105}
 \lim_{t \rightarrow \infty} f (t {\bf 1}) = + \infty
\end{equation}
where ${\bf 1}= (1, \ldots, 1) \in \bfR^n$.
Let $\ul{u} \in C^2 (\bar{B_r})$ be an admissible function
and $u \in C^4 (B_r)$ an admissible solution of \eqref{3I-10}
 in a geodesic ball $B_r \subset M$ of radius $r > 0$. Then
\begin{equation}
\label{gj-G185}
\sup_{B_{\frac{r}{2}}} |\nabla^2 u| \leq C_4
\end{equation}
where $C_4$ depends on $r^{-1}$, $|u|_{C^1 (B_r)}$,
$|\ul u|_{C^2 (B_r)}$, and other known data.
\end{theorem}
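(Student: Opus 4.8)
Since $\lambda(W)\in\Gamma$ and $\Gamma_n\subseteq\Gamma\neq\bfR^n$ force $\tr W=\sum_i\lambda_i\geq0$ (so that $|\lambda_i|\leq(n-1)\lambda_1(W)$), while $|u|_{C^1(B_r)}$ controls $A[u]=A(x,u,\nabla u)$ in $C^0$, a one-sided bound $\lambda_1(W)\leq C$ on $B_{r/2}$ for $W:=\nabla^2u+A[u]$ already yields $\Delta u\geq-C$ there and hence \eqref{gj-G185}. The plan is therefore to bound the largest eigenvalue $\lambda_1=\lambda_1(W)$ from above on $B_{r/2}$ by a localized maximum principle applied to
\[ \Phi=\log W_{\xi\xi}+\phi(|\nabla u|^2)+\log\eta, \]
maximized over $x\in\ol{B_r}$ and unit vectors $\xi\in T_xM$; here $\eta$ is a standard cutoff equal to $1$ on $B_{r/2}$ and vanishing on $\partial B_r$, and $\phi$ is a convex increasing function with $\phi''\geq2(\phi')^2$ and $\phi'$ as large as will be needed. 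If this maximum is nonpositive we are done; otherwise it is attained at an interior point $x_0$ in a direction $e_1$, and I would work in geodesic normal coordinates at $x_0$ in which $W$ is diagonal with $W_{11}=\lambda_1$, extending $e_1$ by parallel transport along geodesics so that the derivatives of $\xi$ only produce curvature-order terms.

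At $x_0$, $\nabla\Phi=0$ gives $W_{11;i}=-W_{11}\big(\eta_i/\eta+\phi'\,(|\nabla u|^2)_i\big)$, and $\sum_if_i\Phi_{ii}\leq0$, where $f_i=f_i(\lambda(W))$. Differentiating \eqref{3I-10} once and twice along $e_1$ and discarding the nonpositive second-order term in $f$ by concavity \eqref{3I-30} gives $\sum_if_iW_{ii;11}\geq D_{11}\psi$, the full second covariant derivative of $\psi(x,u,\nabla u)$ along $e_1$; commuting covariant derivatives then replaces $\sum_if_iW_{ii;11}$ by $\sum_if_iW_{11;ii}$ up to a commutator/curvature error linear in $W$, hence $\leq C(1+\lambda_1)\sum_if_i$. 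In $D_{11}\psi$ the term $\psi_{p_kp_l}u_{k1}u_{l1}\geq0$ by the convexity of $\psi$ in $p$ from \eqref{A2}; the terms carrying $W_{11;i}$ are removed through $\nabla\Phi=0$; and the gradient weight supplies the good term $\phi'\sum_{i,k}f_iu_{ki}^2\geq\phi'\sum_if_i\lambda_i^2-C\phi'\big(1+\sum_if_i\big)$, together with $\phi''\sum_if_i\big((|\nabla u|^2)_i\big)^2$, which absorbs $-\sum_if_i(W_{11;i})^2W_{11}^{-2}$.

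The genuinely new difficulty, absent when $A=A(x)$, is the second covariant derivative of the tensor $A[u]$ arising on commuting. Using $u_{ki}=\lambda_i\delta_{ki}-A_{ki}$ at $x_0$, its second-order-in-$u$ part is, modulo lower-order pieces and terms $\leq C\lambda_1\sum_if_i$, of the form $\lambda_1^2\sum_if_iA^{e_ie_i}_{p_1p_1}+\sum_if_iA^{11}_{p_ip_i}\lambda_i^2$. The concavity of $A^{\xi\xi}$ in $p$ from \eqref{A2} gives the first, potentially worst, term---which is of size $\lambda_1^2\sum_if_i$---the favourable sign; and in the second, the Ma--Trudinger--Wang condition \eqref{A3}, applicable for $i\neq1$ since $e_1\perp e_i$, supplies the decisive sign $A^{11}_{p_ip_i}\leq-c_0$, so that these contributions, together with the gradient-weight term $\phi'\sum_if_i\lambda_i^2$, are all absorbable once $\phi'$ is large (in terms of $c_0$ and $\sup|A^{\xi\xi}_{pp}|$). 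What remains is an inequality of the schematic form
\[ \phi'\sum_if_i\lambda_i^2\leq C(1+\lambda_1)\Big(1+\sum_if_i\Big)+Cr^{-2}\Big(1+\sum_if_i\Big), \]
which has to be converted into $\lambda_1\leq C_4$. Here \eqref{gj-I105} is indispensable, and it is what replaces---for the interior estimate---the stronger condition \eqref{gj-I100} used in Theorem~\ref{3I-th3}: comparing $\lambda(W)$ with $t{\bf1}$ for $t$ so large that $f(t{\bf1})>\sup\psi$ and using the concavity of $f$ gives $\sum_if_i\lambda_i\leq t_0\sum_if_i$, and from this (with $\sum_i\lambda_i\geq0$) one extracts the coercivity of $\sum_if_i\lambda_i^2$ needed to dominate the right-hand side once $\lambda_1$ is large. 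Fixing the power of $\eta$, the function $\phi$, and the constants then gives \eqref{gj-G185} with $C_4$ depending on $r^{-1}$, $|u|_{C^1(B_r)}$, $|\ul u|_{C^2(B_r)}$ and the data.

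I expect the main obstacle to be exactly this last step, together with the bookkeeping of $\nabla^2A[u]$: the only good term controlling the ``bad'' third-order direction $W_{11;1}$ is $f_1\lambda_1^2$, which can degenerate because $f_1$ is the smallest of the $f_i$, and only the growth hypothesis \eqref{gj-I105}---through the comparison with the diagonal ray---rules out this degeneration; meanwhile the second covariant derivatives of $A[u]$, which unlike in the Euclidean or $A=A(x)$ setting feed $\nabla^2u$ back into the estimate, have to be balanced against the \eqref{A3} term and the gradient-weight term so that no single absorption costs more than a controlled share of the good quantity $\phi'\sum_if_i\lambda_i^2$.
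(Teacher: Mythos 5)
Your framework---a localized maximum principle for $\log W_{\xi\xi}+\phi(|\nabla u|^2)+\log\eta$, commutation, concavity of $F$, convexity of $\psi$ in $p$, and the MTW condition---matches the paper's in outline, but the proposal has three substantive gaps that together prevent it from closing.

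First, your test function never brings in $\ul u$ at all, whereas the paper takes $\eta=(\ul u - u)+\log\zeta$. The term $\ul u - u$ is precisely where the admissibility of $\ul u$ and condition~\eqref{gj-I105} enter: admissibility gives $\lambda(Bg+\ul U)\in\Gamma_n$ for $B$ large, and concavity of $F$ then yields
\[ F^{ii}(\ul U_{ii}-U_{ii})\ \geq\ F(Bg)\ -\ 2B\sum F^{ii}\ -\ \psi, \]
with $F(Bg)\to\infty$ by~\eqref{gj-I105}. Without this term there is no mechanism to produce a constant that beats the free $+C$ in the final inequality when $\sum F^{ii}$ degenerates.

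Second, you apply the MTW condition to the wrong piece of $\nabla^2A$. After commuting, $F^{ii}\nabla_{ii}U_{11}$ picks up $F^{ii}(\nabla_{ii}A^{11}-\nabla_{11}A^{ii})$, with leading quadratics $\sum_i F^{ii}A^{11}_{p_ip_i}U_{ii}^2 - U_{11}^2\sum_i F^{ii}A^{ii}_{p_1p_1}$. The decisive \emph{positive} term comes from the second piece: MTW for $i\neq1$ gives $-A^{ii}_{p_1p_1}\geq c_0$, hence a good term $c_0U_{11}^2\sum_{i\neq1}F^{ii}$. Concavity alone (which you invoke for this piece) gives only $\geq0$, which is not enough. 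You apply MTW to the first piece, $A^{11}_{p_ip_i}\leq -c_0$; but that piece carries a $+$ sign, so MTW there produces a \emph{negative} contribution $\leq -c_0\sum_{i\neq1}F^{ii}U_{ii}^2$---a liability to be absorbed, not an asset. In the paper this piece is just bounded in modulus by $C\sum_{i\geq2}F^{ii}U_{ii}^2$ and absorbed by the gradient weight.

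Third, and because the good term $c_0U_{11}\sum_{i\neq1}F^{ii}$ is missing, your ``what remains'' inequality
\[ \phi'\sum f_i\lambda_i^2\ \leq\ C(1+\lambda_1)\Big(1+\sum f_i\Big)\ +\ Cr^{-2}\Big(1+\sum f_i\Big) \]
cannot be converted into $\lambda_1\leq C$ by the proposed ``coercivity of $\sum f_i\lambda_i^2$''. Take $f=\sigma_n^{1/n}$ and $\lambda=(t,s,\ldots,s)$ on the level set $f=\psi$, so $ts^{n-1}=\psi^n$; then $\sum f_i\lambda_i^2\sim t$ while $\lambda_1\sum f_i\sim t\cdot s^{-1}\sim t^{n/(n-1)}$, so the right-hand side grows strictly faster than the left and no choice of $\phi'$ closes the estimate. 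It is exactly the MTW term $c_0U_{11}\sum_{i\neq1}F^{ii}$, together with the lower bound $F(Bg)$ from $\ul u$ and~\eqref{gj-I105}, that the paper uses to dominate the $\sum F^{ii}$-errors and finish. Your remark that the diagonal-ray comparison gives $\sum f_i\lambda_i\leq t_0\sum f_i$ is true but is an upper bound in the wrong direction and does not yield the claimed coercivity.

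In short: the core scaffolding is right, but the proof as sketched misses the actual source of the good term (MTW on $A^{ii}_{p_1p_1}$), omits the role of $\ul u$, and rests the final step on a coercivity that is false in the generality of the theorem.
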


\begin{remark}
The function $\ul{u} \in C^2 (\bar{B_r})$ does not have to be a subsolution.
\end{remark}

Equations of form \eqref{3I-10} appear in many
interesting geometric problems. These include
the Minkowski problem (\cite{Nirenberg53},  \cite{Pogorelov52},
\cite{Pogorelov78},
\cite{CY76b}) and its generalizations proposed by
Alexandrov~\cite{Alexandrov56}
and Chern~\cite{Chern59} (see also \cite{GG02}),
the Christoffel-Minkowski problem (cf. eg. \cite{GM03}),
and the Alexandrov problem of prescribed curvature measure
(cf, e.g. \cite{GL97}, \cite{GLL12}),
which are associated with equation~\eqref{3I-10}
 on $\bfS^n$
for $f = \sigma_n^{\frac{1}{n}}, \, (\sigma_n/\sigma_l)^{\frac{1}{n-l}}$
or $\sigma_k^{\frac{1}{k}}$ and $A = u g$.
Another classical example is the Darboux equation
\begin{equation}
\label{gj-I45}
\det (\nabla^2 u + g) = K (x) (-2 u - |\nabla u|^2) \det g
\end{equation}
on a positively curved surface $(M^2, g)$, which appears
in isometric embedding,
e.g. the Weyl problem (\cite{Nirenberg53}, \cite{GL94}, \cite{HZ95}).
In \cite{GW98} Guan and Wang studied the Monge-Amp\`ere type equation on $\bfS^n$
\begin{equation}
\label{gj-I50}
\det \Big(\nabla^2 u - \frac{u^2 + |\nabla u|^2}{2 u} g\Big)
= K (x) \Big(\frac{u^2 + |\nabla u|^2}{2 u}\Big)^n \det g
\end{equation}
which arises from reflector antenna designs in engineering,
while the Schouten tensor equation
\begin{equation}
\label{gj-I60}
\sigma_k \Big(\nabla^2 u + d u \otimes du
    - \frac{1}{2} |\nabla u|^2 g + S_g\Big) = \psi (x) e^{-2ku}
\end{equation}
(where $S_g$ is the Schouten tensor of $(M^n, g)$)
introduced by Viaclovsky~\cite{Viaclovsky00} is connected with a natural
fully nonlinear version of the Yamabe problem and has very interesting
applications in conformal geometry; see for instance \cite{CGY02a},
\cite{GeW06}, 
\cite{GW03b}, 
\cite{GV07}, 
\cite{LL03}, 
\cite{STW07}
and references therein.

Interior second order estimates were obtained in \cite{GW98} for
equation~\eqref{gj-I50} and in  \cite{GW03a} for equation~\eqref{gj-I60};
see also \cite{ChenS05} for a simplified proof.
The reader is also referred to \cite{Pogorelov78} for the classical
Pogorelov estimate for the Monge-Amp\`ere equation,
and to \cite{CW01}, \cite{SUW04} for its generalizations to the Hessian
equation ($f = \sigma_k^{1/k}$ in \eqref{3I-10})
and equations of prescribed curvature, respectively.

The rest of this paper is organized as follows.
In Section~\ref{gj-P} we present a brief review of some elementary
formulas and a consequence of Theorem~\ref{3I-th3};
see Proposition~\ref{gj-lem3}.
In Section~\ref{gj-C} we give a proof of Theorem~\ref{3I-th3}.
In Section~\ref{gj-S} we prove the maximum principle \eqref{hess-a10}
in Theorem~\ref{gj-th2} and derive the interior estimate \eqref{gj-G185},
while the boundary estimate \eqref{hess-a10b} is established in
Section~\ref{gj-B}.
Section~\ref{gj-G} is devoted to the gradient estimates.
Finally in Section~\ref{gj-E} we state some existence results for
the Dirichlet problem~\eqref{3I-10}-\eqref{3I-10d} based on the gradient
and second order estimates in Theorem~\ref{gj-th2} and Section~\ref{gj-G},
which can be proved using the standard method of continuity and degree theory.

Part of this work was done while the second author was visiting
Department of Mathematics at Ohio State University. He wishes to
thank the Department and the University for their hospitality.
The first author wishes to thank Xinan Ma for bringing the functions $P_k$ to
his attention in a conversation several years ago.

\bigskip

\section{Preliminaries}
\label{gj-P}
\setcounter{equation}{0}

\medskip

Throughout the paper let $\nabla$ denote the Levi-Civita connection
of $(M^n, g)$. The curvature tensor is defined by
\[ R (X, Y) Z = - \nabla_X \nabla_Y Z + \nabla_Y \nabla_X Z
                 + \nabla_{[X, Y]} Z. \]
Under a local frame $e_1, \ldots, e_n$ on $M^n$ we denote
$g_{ij} = g (e_i, e_j)$, $\{g^{ij}\} = \{g_{ij}\}^{-1}$, while
the Christoffel symbols $\Gamma_{ij}^k$ and curvature
coefficients are given respectively by
$\nabla_{e_i} e_j = \Gamma_{ij}^k e_k$ and
\[  R_{ijkl} = g( R (e_k, e_l) e_j, e_i), \;\; R^i_{jkl} = g^{im} R_{mjkl}.  \]
We shall use the notation $\nabla_i = \nabla_{e_i}$,
$\nabla_{ij} = \nabla_i \nabla_j - \Gamma_{ij}^k \nabla_k $, etc.

For a differentiable function $v$ defined on $M^n$, we usually
identify $\nabla v$ with its gradient, and use
$\nabla^2 v$ to denote its Hessian which is locally given by
$\nabla_{ij} v = \nabla_i (\nabla_j v) - \Gamma_{ij}^k \nabla_k v$.
We recall that $\nabla_{ij} v =\nabla_{ji} v$ and
\begin{equation}
\label{hess-A70}
 \nabla_{ijk} v - \nabla_{jik} v = R^l_{kij} \nabla_l v,
\end{equation}
\begin{equation}
\label{hess-A80}
\begin{aligned}
\nabla_{ijkl} v - \nabla_{klij} v
= R^m_{ljk} \nabla_{im} v & + \nabla_i R^m_{ljk} \nabla_m v
      + R^m_{lik} \nabla_{jm} v \\
  & + R^m_{jik} \nabla_{lm} v
      + R^m_{jil} \nabla_{km} v + \nabla_k R^m_{jil} \nabla_m v.
\end{aligned}
\end{equation}

Let $u \in C^4 (\bM)$ be an admissible solution of
equation~\eqref{3I-10}.
For simplicity we shall denote $U := \nabla^2 u + A (x, u, \nabla u)$
and, under a local frame $e_1, \ldots, e_n$,
\[ U_{ij} \equiv U (e_i, e_j) = \nabla_{ij} u + A^{ij} (x, u, \nabla u), \]
\begin{equation}
\label{gj-P10}
\begin{aligned}
\nabla_k U_{ij}
  \equiv \,& \nabla U (e_i, e_j, e_k)
             = \nabla_{kij} u + \nabla_k A^{ij} (x, u, \nabla u)  \\
  \equiv \,& \nabla_{kij} u  + \nabla'_k A^{ij} (x, u, \nabla u)
              + A^{ij}_z (x, u, \nabla u) \nabla_k u \\
           & + A^{ij}_{p_l} (x, u, \nabla u) \nabla_{kl} u
 \end{aligned}
\end{equation}
 where $A^{ij} = A^{e_{i} e_{j}}$ and
$\nabla'_k A^{ij}$ denotes the {\em partial} covariant derivative of $A$
when viewed as depending on $x \in M$ only, while the meanings of
$A^{ij}_z$ and $A^{ij}_{p_l}$, etc are obvious. Similarly we can calculate
 $\nabla_{kl} U_{ij} = \nabla_k \nabla_l U_{ij} - \Gamma_{kl}^m \nabla_m U_{ij}$, etc.

In the rest of this paper we shall always use orthonormal local frames.
Write equation~\eqref{3I-10} locally in the form
\begin{equation}
\label{3I-10'}
F (U) = \psi (x, u, \nabla u)
\end{equation}
where we identify $U \equiv \{U_{ij}\}$ and $F$ is the function defined by
\[ F (B) = f (\lambda (B)) \]
for a symmetric matrix $B$ with $\lambda (B) \in \Gamma$;
throughout the paper we shall use the notation
\[ F^{ij} = \frac{\partial F}{\partial B_{ij}} (U), \;\;
  F^{ij, kl} = \frac{\partial^2 F}{\partial B_{ij} \partial B_{kl}} (U). \]
The matrix $\{F^{ij}\}$ has eigenvalues $f_1, \ldots, f_n$ and
is positive definite by assumption (\ref{3I-20}), while (\ref{3I-30})
implies that $F$ is a concave function (see \cite{CNS3}).
Moreover, when $U_{ij}$ is diagonal so is $\{F^{ij}\}$, and the
following identities hold
\[   F^{ij} U_{ij} = \sum f_i \lambda_i, \;\;
F^{ij} U_{ik} U_{kj} = \sum f_i \lambda_i^2 \]
 where $\lambda (U) = (\lambda_1, \ldots, \lambda_n)$.

The following result can be found in \cite{Guan12a}
(Proposition 2.19 and Corollary 2.21).

\begin{proposition}[\cite{Guan12a}]
\label{g-prop1}
Suppose f satisfies \eqref{3I-20}.
There is $c_0 > 0$ and an index $r$ such that
\begin{equation}
\label{eq5.170}
\sum_{l<n} F^{ij} U_{il} U_{lj} \geq c_0 \sum_{i \neq r}  f_i \lambda_i^2.
\end{equation}

If in addition $f$ satisfies \eqref{3I-30} and \eqref{3I-45}  then
 for any index $r$,
\begin{equation}
\label{C210}
\sum f_i |\lambda_i| \leq \epsilon \sum_{i\neq r} f_i \lambda_i^2
  + C \Big(1 + \frac{1}{\epsilon} \sum f_i\Big).
\end{equation}
\end{proposition}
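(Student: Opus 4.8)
The plan is to handle the two inequalities separately; both come down to elementary linear algebra once the right quantities are isolated.

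For \eqref{eq5.170}, the point is that $\sum_{l<n}F^{ij}U_{il}U_{lj}$ is the full trace $\sum_{l\le n}F^{ij}U_{il}U_{lj}=\sum_i f_i\lambda_i^2$ (noted above) with the single direction $e_n$ removed, and deleting one direction can only cost one eigenvalue's worth. Concretely, I would pick an orthonormal basis $v_1,\ldots,v_n$ of eigenvectors of $U$, so that $\{F^{ij}\}$ is simultaneously diagonal with entries $f_1,\ldots,f_n$, and write $e_n=\sum_i c_i v_i$ with $\sum_i c_i^2=1$. The $l=n$ term is $\sum_{i,j}F^{ij}U_{in}U_{nj}=(UFU)_{nn}=\sum_i c_i^2 f_i\lambda_i^2$, so
\[ \sum_{l<n}F^{ij}U_{il}U_{lj}=\sum_i(1-c_i^2)f_i\lambda_i^2 . \]
Now take $r$ with $c_r^2=\max_i c_i^2$; then for $i\ne r$ one has $2c_i^2\le c_i^2+c_r^2\le 1$, hence $1-c_i^2\ge\tfrac12$. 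Discarding the nonnegative $i=r$ term and using $f_i>0$ gives \eqref{eq5.170} with $c_0=\tfrac12$. Only \eqref{3I-20} is used here.

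For \eqref{C210}, fix $r$. Young's inequality applied to the off-$r$ indices gives $f_i|\lambda_i|\le\epsilon f_i\lambda_i^2+\tfrac{1}{4\epsilon}f_i$, hence
\[ \sum_{i\ne r}f_i|\lambda_i|\le\epsilon\sum_{i\ne r}f_i\lambda_i^2+\frac{1}{4\epsilon}\sum_i f_i , \]
so it remains to absorb the single term $f_r|\lambda_r|$, which I would do by bounding $f_r\lambda_r$ from both sides. From \eqref{3I-45}, $f_r\lambda_r\ge-\sum_{i\ne r}f_i\lambda_i\ge-\sum_{i\ne r}f_i|\lambda_i|$, which by the displayed estimate is $\ge-\epsilon\sum_{i\ne r}f_i\lambda_i^2-\tfrac{1}{4\epsilon}\sum f_i$. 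For the upper bound, concavity of $f$ (the inequality $\sum f_i(\lambda)(\mu_i-\lambda_i)\ge f(\mu)-f(\lambda)$) with $\mu={\bf 1}\in\Gamma_n\subseteq\Gamma$ yields $\sum_i f_i\lambda_i\le f(\lambda)-f({\bf 1})+\sum_i f_i$; since $\lambda=\lambda(U)$ for an admissible solution, $f(\lambda)=\psi(x,u,\nabla u)$ is a priori bounded, so $\sum_i f_i\lambda_i\le C(1+\sum_i f_i)$ and therefore $f_r\lambda_r\le C(1+\sum f_i)-\sum_{i\ne r}f_i\lambda_i\le\epsilon\sum_{i\ne r}f_i\lambda_i^2+C(1+\tfrac1\epsilon\sum f_i)$, again via the displayed estimate. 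Combining the two bounds, $f_r|\lambda_r|\le\epsilon\sum_{i\ne r}f_i\lambda_i^2+C(1+\tfrac1\epsilon\sum f_i)$; adding this to the off-$r$ estimate and replacing $\epsilon$ by $\epsilon/2$ proves \eqref{C210}.

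The only genuinely delicate point is the upper bound on $\sum_i f_i\lambda_i$ in the second part: it cannot follow from the structure conditions alone — for $f=\sigma_1$ one can make $\sum f_i|\lambda_i|$ arbitrarily large while $\sum_{i\ne r}f_i\lambda_i^2$ and $\sum f_i$ stay bounded (take $\lambda=(t,\delta,\ldots,\delta)$ with $t\to\infty$, $\delta\to0$) — so one must use that $\lambda$ is the eigenvalue vector of an admissible solution, so that $f(\lambda)=\psi$ is controlled; accordingly $C$ in \eqref{C210} depends, besides $n$ and $f$, on $\sup\psi$. Everything else is the trace identities above together with Young's inequality.
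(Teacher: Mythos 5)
Your proof is correct: the eigenbasis computation giving $\sum_{l<n}F^{ij}U_{il}U_{lj}=\sum_i(1-c_i^2)f_i\lambda_i^2$ with $c_r^2=\max_i c_i^2$ yields \eqref{eq5.170} with $c_0=\tfrac12$, and \eqref{C210} follows as you say from Young's inequality, \eqref{3I-45}, and the concavity inequality at $\mu={\bf 1}$ combined with the bound $f(\lambda)=\psi(x,u,\nabla u)\leq C$ available for admissible solutions. The paper itself gives no proof of this proposition -- it cites Proposition 2.19 and Corollary 2.21 of \cite{Guan12a}, whose arguments are essentially what you reconstructed -- and your observation that the constant in \eqref{C210} must depend on an upper bound for $\psi[u]$ (hence on $|u|_{C^1(\bM)}$), as your $f=\sigma_1$ example shows, is accurate and consistent with how the estimate is used in the boundary estimates.
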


Let $\mathcal{L}$ be the linear operator locally defined by
\begin{equation}
\label{gj-lin}
\mathcal{L} v := F^{ij} \nabla_{ij} v
    + (F^{ij} A^{ij}_{p_k} - \psi_{p_k}) \nabla_k v, \;\; v \in C^2 (M)
\end{equation}
where $A^{ij}_{p_k} \equiv A^{ij}_{p_k} [u]
\equiv  A^{ij}_{p_k}  (x, u, \nabla u)$,
$\psi_{p_k} \equiv \psi_{p_k} [u] \equiv \psi_{p_k} (x, u, \nabla u)$.
The following is a consequence of Theorem~\ref{3I-th3}.

\begin{proposition}
\label{gj-lem3}
There exist uniform positive constants $R$, $\theta$
 such that
\begin{equation}
\label{gj-basic1}
\mathcal{L} (\ul u - u)
        \geq \theta \sum F^{ii} + \theta
\;\; \mbox{whenever $|\lambda (U)| \geq R$}.
\end{equation}
\end{proposition}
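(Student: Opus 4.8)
The plan is to deduce Proposition~\ref{gj-lem3} from Theorem~\ref{3I-th3} by inserting the right compact set $K$ and the right admissible value interval $[a,b]$. Since $u$ is a given admissible solution with $u \geq \ul u$ on a compact manifold, $|u|_{C^1}$ and $|\ul u|_{C^2}$ are controlled; hence $\mu := \lambda(\nabla^2 \ul u + A[\ul u])$ ranges over a fixed compact subset $K \subset \Gamma$ (its entries are bounded by $|\ul u|_{C^2(\bM)}$ and the bounds on $A$, and by admissibility of $\ul u$ it stays in $\Gamma$; on a closed manifold $K$ is literally compact, and in the boundary case one still gets a compact set after the gradient bound). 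Likewise, along the solution $f(\lambda(U)) = \psi(x,u,\nabla u)$ is pinched between two positive constants, so setting $a := \inf \psi$ (which exceeds $\sup_{\partial\Gamma} f$ by \eqref{3I-40} and $\psi>0$) and $b := \sup \psi$, every point $\lambda := \lambda(U)$ with $|\lambda| \geq R$ lies in $\Gamma^{[a,b]}$, where $R = R(K,[a,b])$ and $\theta = \theta(K,[a,b])$ are the constants produced by Theorem~\ref{3I-th3}.

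Then I would simply translate inequality \eqref{3I-100a'} back into matrix/operator form. Work at a point $x_0$ where $|\lambda(U)| \geq R$ and choose an orthonormal frame in which $U = U(x_0)$ is diagonal, so $\{F^{ij}\}$ is diagonal with entries $f_i(\lambda)$ and $\sum F^{ii} = \sum f_i(\lambda)$. Writing $\mu_i$ for the eigenvalues of $\ul U := \nabla^2 \ul u + A[\ul u]$ at $x_0$, concavity of $F$ as a function on symmetric matrices gives
\[
F^{ij}(\ul U_{ij} - U_{ij}) \geq F(\ul U) - F(U) \geq \sum f_i(\lambda)(\mu_i - \lambda_i) - (f(\mu) - f(\lambda)),
\]
where in the last step I used the standard fact $F^{ij}(\ul U_{ij} - U_{ij}) \geq \sum f_i(\mu_i - \lambda_i)$ together with... actually more directly: $F^{ij} \ul U_{ij} \geq F(\ul U) - F(U) + F^{ij} U_{ij}$ and $F^{ij} U_{ij} = \sum f_i \lambda_i$, while $F^{ij}\ul U_{ij} = \sum f_i \mu_i$ in the diagonalizing frame; combining,
\[
F^{ij}(\ul U_{ij} - U_{ij}) = \sum f_i(\lambda)(\mu_i - \lambda_i) \geq \theta + \theta\sum f_i(\lambda) + f(\mu) - f(\lambda)
\]
by Theorem~\ref{3I-th3}. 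Now $f(\mu) = F(\ul U) \geq \psi(x,\ul u, \nabla \ul u)$ since $\ul u$ is a subsolution, and $f(\lambda) = \psi(x,u,\nabla u)$; so $f(\mu) - f(\lambda) \geq \psi[\ul u] - \psi[u]$, a bounded quantity. Hence $F^{ij}(\ul U_{ij} - U_{ij}) \geq \theta + \theta \sum F^{ii} + (\psi[\ul u] - \psi[u])$, and since $\sum F^{ii} \to \infty$ is not available, I instead absorb the bounded error into $\theta$ by shrinking it: replacing $\theta$ by $\theta/2$, the term $\frac{\theta}{2}\sum F^{ii} + \frac{\theta}{2}$ dominates any fixed constant once... no — this needs care, since $\sum F^{ii}$ need not be large. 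The clean fix is to note $\psi[\ul u]-\psi[u]$ is bounded by a constant $C$ depending only on the stated data, so from the start one chooses $a,b$ generously and uses that for $|\lambda|\geq R$ the full strength of \eqref{3I-100a'} already carries a $\theta + \theta\sum f_i$ on the right; enlarging $R$ if necessary so that also $\sum f_i(\lambda)$ is... this is not automatic either. Therefore the correct move, and the one I expect the authors make, is to absorb $\psi[\ul u] - \psi[u]$ directly: it is bounded, and one simply takes the $\theta$ in the Proposition to be $\theta_0$ minus that bound is wrong in sign; rather one keeps $\theta + \theta \sum F^{ii} + (f(\mu)-f(\lambda))$ and observes $f(\mu) - f(\lambda) \geq -C$, then redefines the Proposition's constant as $\theta - C$ only if positive. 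The honest resolution: enlarge $R$ is irrelevant; instead shrink the interval and use that Theorem~\ref{3I-th3}'s $\theta$ can be taken as large as one wants relative to $C$ by... it cannot. Hence the actual argument must keep $f(\mu)$ on the good side — and indeed $f(\mu) - f(\lambda) = \psi[\ul u] - \psi[u] \geq -C_0$ where $C_0$ depends on $|u|_{C^1}, |\ul u|_{C^1}$; so the final inequality reads $\mathcal{L}(\ul u - u)(x_0) \geq \theta + \theta\sum F^{ii} - C_0$, and one concludes by noting $\theta + \theta \sum F^{ii} - C_0 \geq \frac{\theta}{2}(1 + \sum F^{ii})$ provided $\theta \geq 2C_0$, which one arranges by choosing $[a,b]$ small around a value of $\psi$ — here I will defer to Theorem~\ref{3I-th3}'s uniformity and state the constant accordingly.

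Finally I must convert $F^{ij}(\ul U_{ij} - U_{ij})$ into $\mathcal{L}(\ul u - u)$. Recall $U_{ij} = \nabla_{ij} u + A^{ij}(x,u,\nabla u)$ and $\ul U_{ij} = \nabla_{ij}\ul u + A^{ij}(x,\ul u, \nabla\ul u)$, so
\[
F^{ij}(\ul U_{ij} - U_{ij}) = F^{ij}\nabla_{ij}(\ul u - u) + F^{ij}\big(A^{ij}[\ul u] - A^{ij}[u]\big).
\]
Comparing with \eqref{gj-lin}, $\mathcal{L}(\ul u - u) = F^{ij}\nabla_{ij}(\ul u - u) + (F^{ij} A^{ij}_{p_k} - \psi_{p_k})\nabla_k(\ul u - u)$, so I need to control the discrepancy $F^{ij}(A^{ij}[\ul u] - A^{ij}[u]) - (F^{ij} A^{ij}_{p_k})\nabla_k(\ul u - u) + \psi_{p_k}\nabla_k(\ul u - u)$. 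The term $\psi_{p_k}\nabla_k(\ul u - u)$ is bounded by $C(1)$ via the $C^1$ bounds. For the $A$-terms, the concavity of $A^{\xi\xi}$ in $p$ from \eqref{A2} gives $A^{ij}[\ul u] - A^{ij}[u] \geq A^{ij}_{p_k}[u]\,\nabla_k(\ul u - u) + (A^{ij}[\ul u] \text{ vs } A^{ij}(x,\ul u,\nabla u))$ — more precisely, splitting $A^{ij}[\ul u] - A^{ij}[u] = \big(A^{ij}(x,\ul u,\nabla \ul u) - A^{ij}(x,\ul u,\nabla u)\big) + \big(A^{ij}(x,\ul u,\nabla u) - A^{ij}(x,u,\nabla u)\big)$, the first bracket is $\geq A^{ij}_{p_k}(x,\ul u, \nabla u)\nabla_k(\ul u - u)$ by concavity in $p$ (applied after contracting against the positive-definite $\{F^{ij}\}$ — this is where the $\xi\xi$ form matters, one uses that $F^{ij} = \sum_\xi$ of rank-one pieces), and the second bracket is $\geq A^{ij}_z \cdot (\ul u - u) \cdot F^{ij}$-weighted $\geq 0$ by \eqref{A4} together with $\ul u \leq u$... wait sign: $A^{\xi\xi}_z \geq 0$ and $\ul u - u \leq 0$ gives $\leq 0$, so this bracket, once $F$-contracted, is $\geq -C\sum F^{ii}$ in the wrong direction; I expect the authors handle it by noting it equals $A^{ij}_z\theta(\ul u - u)$ for intermediate $z$ and absorbing $C|\ul u - u|\sum F^{ii}$ — but that would swamp the good $\theta\sum F^{ii}$, so in fact one must have used \eqref{A4} with the correct orientation, namely that the term is actually nonnegative. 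I will present this carefully in the write-up. The main obstacle, then, is precisely this bookkeeping: showing that all the correction terms coming from the $u$- and $\nabla u$-dependence of $A$ and $\psi$ either vanish, have a favorable sign by \eqref{A2}–\eqref{A4}, or are bounded by a constant that can be absorbed into $\theta$ after shrinking; the genuinely new input, Theorem~\ref{3I-th3}, does the heavy lifting, and what remains is to verify that the structure conditions are exactly what is needed to pass from the eigenvalue inequality to the operator inequality without losing the coercive term $\theta\sum F^{ii}$.
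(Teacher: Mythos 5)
You correctly identify the general strategy: apply Theorem~\ref{3I-th3} with $K$ a compact set containing the eigenvalues of $\ul U = \nabla^2\ul u + A[\ul u]$ and $[a,b]$ containing the range of $\psi$ along the solution, and then transfer the resulting eigenvalue inequality to the operator $\mathcal{L}$ via the structure conditions. However, the execution has two concrete gaps and, as written, does not constitute a proof.

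First, the step converting the eigenvalue inequality to the matrix inequality is wrong. You assert that ``$F^{ij}\ul U_{ij} = \sum f_i\mu_i$ in the diagonalizing frame,'' but in the frame where $U$ is diagonal, $F^{ij}\ul U_{ij} = \sum_i f_i\,\ul U_{ii}$, and the diagonal entries $\ul U_{ii}$ are \emph{not} the eigenvalues $\mu_i$ of $\ul U$ unless $\ul U$ is simultaneously diagonalized. What is actually needed is the inequality $\sum f_i\,\ul U_{ii} \ge \sum f_i\,\mu_i$, with $\lambda$ and $\mu$ both sorted decreasingly and $f_i$ therefore sorted increasingly (a consequence of concavity and symmetry). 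This is a nontrivial von Neumann trace-type inequality; the paper invokes it as Lemma 6.2 of \cite{CNS3}, and that citation is precisely the step missing from your argument.

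Second, the $A$- and $\psi$-corrections cannot be handled by ``absorbing a bounded error into $\theta$'': the constant $\theta$ from Theorem~\ref{3I-th3} is fixed and may well be smaller than $\sup|\psi[\ul u]-\psi[u]|$, and $\sum F^{ii}$ need not be large. You notice this and also worry, rightly, about the sign coming from $A^{ij}_z$, but neither concern is resolved. The paper's argument is a clean \emph{cancellation}, not an absorption. Apply the concavity from \eqref{A2} with the $z$-slot fixed at $u$ (not at $\ul u$, as your decomposition does), then use \eqref{A4} and $u\ge\ul u$ to switch $z$:
\[
F^{ij}A^{ij}_{p_k}[u]\,\nabla_k(\ul u - u)
\;\ge\; F^{ij}\bigl(A^{ij}(x,u,\nabla\ul u)-A^{ij}(x,u,\nabla u)\bigr)
\;\ge\; F^{ij}\bigl(A^{ij}[\ul u]-A^{ij}[u]\bigr),
\]
so the $A$-corrections in $\mathcal{L}(\ul u - u)-F^{ij}(\ul U_{ij}-U_{ij})$ are genuinely nonnegative. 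Similarly $-\psi_{p_k}\nabla_k(\ul u - u)\ge \psi[u]-\psi[\ul u]$, which \emph{exactly} cancels the $F(\ul U)-F(U)\ge \psi[\ul u]-\psi[u]$ appearing on the right of \eqref{gj-01}. With these two points in place the conclusion follows immediately and no constant needs to be absorbed or shrunk.
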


\begin{proof}
For any $x \in M$, choose a smooth orthonormal local frame
$e_1, \ldots, e_n$ about $x$ such that $\{U_{ij} (x)\}$ is diagonal.
From Lemma 6.2 in \cite{CNS3} and Theorem~\ref{3I-th3} we see
that there exist positive constants
$R$, $\theta$ such that when
$|\lambda (U)| \geq R$,
\begin{equation}
\label{gj-01}
F^{ii} (\ul U_{ii}  - U_{ii})
      \geq F (\ul U) -  F (U) + \theta \sum F^{ii} + \theta
\end{equation}
where $\ul U = \{\ul U_{ij}\} = \{\nabla_{ij} {\ul u} + A^{ij} [\ul u]\}$.
By (\ref{A2}) and \eqref{A4} we have
\[\begin{aligned}
A^{ii}_{p_k} \nabla_k (\ul u - u) \,& \geq
     A^{ii} (x, u, \nabla \ul u) - A^{ii} (x, u, \nabla u) \\
  \geq \,& A^{ii} (x, \ul u, \nabla \ul u) - A^{ii} (x, u, \nabla u)
\end{aligned}\]
and
\[\begin{aligned}
-\psi_{p_k} \nabla_k (\ul u - u) \,& \geq
     - \psi (x, u, \nabla \ul u) + \psi (x, u, \nabla u) \\
            \geq  \,& -\psi (x, \ul u, \nabla \ul u) + \psi (x, u, \nabla u).
\end{aligned}\]
Thus  \eqref{gj-basic1} follows from \eqref{gj-01}.
\end{proof}

\bigskip

\section{Proof of Theorem~\ref{3I-th3} }
\label{gj-C}
\setcounter{equation}{0}

\medskip
 
In this section we prove Theorem~\ref{3I-th3}. Throughout the section we
assume \eqref{3I-20}, \eqref{3I-30} and \eqref{gj-I100} hold. 
To give the reader some idea about the proof, we shall first 
prove the following simpler version of Theorem~\ref{3I-th3}.

\begin{theorem}
\label{3I-th3'}
Let $\mu \in \Gamma$ and 
$\sup_{\partial \Gamma}  f < \sigma < \sup_{\Gamma}  f$.
There exist positive constants
$\theta$, $R$ such that for any
$\lambda \in \partial \Gamma^{\sigma}$, when $|\lambda| \geq R$,
\begin{equation}
\label{3I-100a}
\sum f_i (\lambda) (\mu_{i} - \lambda_i) \geq
     \theta_{\mu} 
     + f (\mu) - f (\lambda).
\end{equation}
\end{theorem}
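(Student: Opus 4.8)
The plan is to argue by contradiction, exploiting the geometry encoded in \eqref{gj-I100}: namely that the tangent plane $T_\lambda\partial\Gamma^{f(\lambda)}$ meets the lower level set $\partial\Gamma^\sigma$ in a \emph{compact} set. Fix $\mu\in\Gamma$ and $\sigma$ as in the statement. Suppose the conclusion fails; then there is a sequence $\lambda^k\in\partial\Gamma^\sigma$ with $|\lambda^k|\to\infty$ along which
\[
\sum_i f_i(\lambda^k)(\mu_i-\lambda^k_i) - \big(f(\mu)-f(\lambda^k)\big)\longrightarrow 0,
\]
since by concavity of $f$ the left side is always $\ge 0$, and if it stayed bounded below by a positive constant we could take $\theta_\mu$ to be that constant (after handling finitely many $\lambda$ with $|\lambda|<R$, which is why we only need $|\lambda|\ge R$). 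Normalizing the gradients, write $\nu^k = Df(\lambda^k)/|Df(\lambda^k)|$; after passing to a subsequence $\nu^k\to\nu$ with $\nu\ge 0$ (componentwise, by \eqref{3I-20}) and $|\nu|=1$. The key quantity $\sum_i f_i(\lambda^k)(\mu_i-\lambda^k_i)$ divided by $|Df(\lambda^k)|$ is $\langle\nu^k,\mu-\lambda^k\rangle$, which measures (up to sign and scale) the signed distance from $\mu$ to the tangent hyperplane $T_{\lambda^k}$.

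Next I would separate two regimes according to whether $|Df(\lambda^k)|$ stays bounded or tends to infinity. If $|Df(\lambda^k)|\to\infty$, then since $f(\mu)-f(\lambda^k)=f(\mu)-\sigma$ is a fixed constant, the hypothesis forces $\langle\nu^k,\mu-\lambda^k\rangle\to 0$, i.e.\ $\mu$ lies asymptotically on the hyperplane $T_{\lambda^k}$. But $\mu$ is an interior point of $\Gamma$ with $f(\mu)$ possibly above $\sigma$; the concavity of $f$ gives $\langle Df(\lambda^k),\mu-\lambda^k\rangle\ge f(\mu)-\sigma>-\infty$, and more importantly, for any $\mu'\in\Gamma^\sigma$ (the open superlevel set) we get $\langle\nu^k,\mu'-\lambda^k\rangle \ge (f(\mu')-\sigma)/|Df(\lambda^k)|>0$. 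Taking a small ball $B_\rho(\mu_0)\subset\Gamma^\sigma$ around a point with $f(\mu_0)>\sigma$ and letting $\mu'$ range over it shows $\langle\nu^k,\mu_0-\lambda^k\rangle\ge \rho - o(1)$, so the signed distance from $\lambda^k$ to that ball along $\nu^k$ is bounded below. Combined with $\langle\nu^k,\mu-\lambda^k\rangle\to 0$, the point $\mu$ and the ball $B_\rho(\mu_0)$ lie on the same side of, and at bounded distance from, the moving hyperplane; passing to the limit produces a fixed hyperplane (through $\mu$, normal $\nu$) that supports $\Gamma^\sigma$ — contradicting that $T_{\lambda^k}\cap\partial\Gamma^\sigma$ is compact and nonempty, because a supporting hyperplane at infinity to the noncompact convex region $\Gamma^\sigma$ whose recession directions fill out the cone would force $T_{\lambda^k}\cap\partial\Gamma^\sigma$ to be unbounded. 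In the complementary regime $|Df(\lambda^k)|$ bounded, after extracting a limit $Df(\lambda^k)\to \ell\ge 0$; here I would use that $f$ is concave and $f(\lambda^k)=\sigma$ is fixed while $|\lambda^k|\to\infty$, together with \eqref{gj-I100} applied with the fixed value $\sigma$, to again locate a limiting tangent plane meeting $\partial\Gamma^\sigma$ in an unbounded set, the desired contradiction.

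The main obstacle, and the place where assumption \eqref{gj-I100} must be used most carefully, is the passage to the limit of the tangent planes $T_{\lambda^k}$: one must show that the limiting affine hyperplane genuinely intersects $\partial\Gamma^\sigma$ in an unbounded set (violating \eqref{gj-I100}), rather than drifting off to infinity or degenerating. This requires controlling both the direction $\nu^k$ (via the normalization and the sign condition $f_i>0$) \emph{and} the "height" $\langle\nu^k,\lambda^k\rangle$ of the plane, which is exactly what the convergence $\langle\nu^k,\mu-\lambda^k\rangle\to 0$ pins down once $\mu$ is held fixed. I would handle this by first establishing, from concavity and the noncompactness of $\partial\Gamma^\sigma$, that $T_\lambda\partial\Gamma^{f(\lambda)}$ always meets $\overline{\Gamma^\sigma}$ when $f(\lambda)\ge\sigma$, then showing that along $\lambda^k$ the intersection $T_{\lambda^k}\cap\partial\Gamma^\sigma$ contains points escaping to infinity — e.g.\ by following recession directions of $\Gamma^\sigma$ that are tangent to the limiting plane — which contradicts the assumed compactness. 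The quantitative constants $\theta_\mu$ and $R$ then come out of the contradiction argument in the usual way: $R$ bounds the finitely many "bad" $\lambda$ and $\theta_\mu$ is the infimum of the (positive, by the above) functional over the remaining closed set, which is attained or bounded away from $0$ by the same compactness.
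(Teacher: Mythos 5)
The paper proves Theorem~\ref{3I-th3'} by a direct construction, splitting into two cases $f(\mu)\ge\sigma$ and $f(\mu)<\sigma$ and, in each case, exhibiting a point on a level surface (resp.\ a spherical cap $\partial B_\delta^\beta(\mu)$) at uniform positive distance from $\partial\Gamma^{f(\mu)}$. Your proposal is a genuinely different route: a compactness--contradiction argument built around the normalized normals $\nu^k=Df(\lambda^k)/|Df(\lambda^k)|$. The overall framing (pin down the direction and the height of $T_{\lambda^k}$, pass to a limiting hyperplane, contradict the compactness in \eqref{gj-I100}) is in the right spirit, but as written it has a real gap.

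The gap is in how the contradiction is actually derived from \eqref{gj-I100}. Your sequence $\lambda^k$ lies in $\partial\Gamma^\sigma$, so $f(\lambda^k)=\sigma$ and $\lambda^k\notin\Gamma^\sigma$; hence \eqref{gj-I100} simply does not apply to $T_{\lambda^k}$ at the level $\sigma$, and the phrase ``contradicting that $T_{\lambda^k}\cap\partial\Gamma^\sigma$ is compact and nonempty'' has no foundation in the stated hypothesis. What you actually produce in the limit is a hyperplane $\Pi$ (normal $\nu$, through $\mu$) that supports the convex set $\Gamma^\sigma$ from one side; but a supporting hyperplane of $\Gamma^\sigma$, possibly touching $\overline{\Gamma^\sigma}$ only at infinity, is not by itself in tension with \eqref{gj-I100}, which constrains $T_\lambda\cap\partial\Gamma^\sigma$ only for $\lambda$ strictly inside $\Gamma^\sigma$. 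The closest you come to a genuine contradiction is the interior-point observation ($\Pi$ through $\mu\in\Gamma^\sigma$ cannot support $\Gamma^\sigma$), but that works only when $f(\mu)>\sigma$ \emph{and} $|Df(\lambda^k)|\to\infty$. When $f(\mu)<\sigma$ the point $\mu$ lies outside $\overline{\Gamma^\sigma}$, so a supporting hyperplane through $\mu$ is perfectly possible and no contradiction results; this is precisely why the paper treats the two cases separately, invoking Lemma~\ref{gj-C-lemma10} to locate $\mu_0\in\partial\Gamma^\sigma$ with $T_{\mu_0}$ parallel to $T_\mu$ and then applying \eqref{gj-I100} at $\mu_0$ with respect to the \emph{lower} level $f(\mu)$. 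Your proposal does not distinguish these cases. The complementary regime where $|Df(\lambda^k)|$ stays bounded — in which $\langle\nu^k,\mu-\lambda^k\rangle$ need not tend to $0$, so $\Pi$ does not pass through $\mu$ at all — is dispatched in one sentence without an argument. Finally, because $f$ is not assumed strictly concave (the paper emphasizes this), ruling out the functional vanishing at a finite limit point $\lambda^*$ also requires an argument you have not supplied. To repair the proposal one would need, at minimum, to identify a point of $\Gamma^\sigma$ at which to apply \eqref{gj-I100} in each case (as the paper does with $\mu$ and $\mu_0$), and to handle the bounded-gradient regime and the case $f(\mu)<\sigma$ explicitly.
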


Recall that for $\sigma \in (\sup_{\partial \Gamma} f, \sup_{\Gamma} f)$ we have
$\Gamma^{\sigma} := \{f > \sigma\} \neq \emptyset$ and 
by assumptions \eqref{3I-20} and \eqref{3I-30}, 
$\partial \Gamma^{\sigma}$ is a smooth convex noncompact complete
hypersurface contained in $\Gamma$.
Let $\mu, \lambda \in \partial \Gamma^{\sigma}$. By the convexity of
$\partial \Gamma^{\sigma}$, the open segment
\[ (\mu, \lambda) \equiv \{t \mu + (1-t)\lambda: 0 < t < 1\} \]
is either completely contained in or does not intersect with
$\partial \Gamma^{\sigma}$. Therefore,
\[ f (t \mu + (1-t) \lambda) - \sigma > 0, \;\; \forall \, 0 < t < 1 \]
by condition~\eqref{3I-20}, unless
$(\mu, \lambda) \subset \partial \Gamma^{\sigma}$.

For $\lambda \in \Gamma$ we shall use $T_{\lambda}$ and $\nu_{\lambda}$
to denote the tangent plane and unit normal vector at 
$\lambda$ to $\partial \Gamma^{f(\lambda)}$, respectively. Note that 
$\nu_{\lambda} = Df (\lambda)/|Df (\lambda)|$.

\begin{proof}[Proof of Theorem~\ref{3I-th3'}]
We divide the proof into two cases: ({\bf a}) $f (\mu) \geq \sigma$
and ({\bf b}) $f (\mu) < \sigma$. 
For the first case we use ideas from \cite{Guan12a} where the
 case $f (\mu) = \sigma$ is proved. For case ({\bf b}) we introduce 
some new ideas which will be used in  the proof of Theorem~\ref{3I-th3}.

Case ({\bf a}) $f (\mu) \geq \sigma$. 
By assumption~\eqref{gj-I100} there is $R_0 > 0$ such that
$T_{\mu} \cap \partial \Gamma^{\sigma}$ is contained in the ball
$B_{R_0} (0)$.  By the convexity of
$\partial \Gamma^{\sigma}$, there exists $\delta > 0$ such that
for any $\lambda \in  \partial \Gamma^{\sigma}$ with
$|\lambda| \geq 2 R_0$,
the open segment from $\mu$ and $\lambda$
\[ (\mu, \lambda) \equiv \{t \mu + (1-t)\lambda: 0 < t < 1\} \]
intersects the level surface $\partial \Gamma^{f (\mu)}$ at a
unique point $\eta$ with $|\eta- \mu| > 2 \delta$.
Since $\nu_{\mu} \cdot (\eta- \mu)/|\eta- \mu|$ has a uniform 
positive lower bound (independent of $\lambda \in  \partial \Gamma^{\sigma}$ with
$|\lambda| \geq 2 R_0$) and the level hypersurface $\partial \Gamma^{f (\mu)}$
is smooth, the point $\mu + \delta|\eta- \mu|^{-1} (\eta - \mu)$ has a uniform 
positive distance  from $\partial \Gamma^{f (\mu)}$, and therefore
\[ f (\mu + \delta |\eta- \mu|^{-1} (\eta - \mu))
\geq f (\mu) + \theta_{\mu} \]
for some uniform constant $\theta > 0$. 
By the concavity of $f$,
\begin{equation}
\label{gj-C30}
\begin{aligned}
\sum f_i (\lambda) (\mu_i - \lambda_i)
\geq \,& \sum f_i (\lambda) (\eta_i - \lambda_i)
     + \sum f_i (\eta) (\mu_i - \eta_i) \\
\geq \,& f (\eta) - f (\lambda)
     + \sup_{0 \leq t \leq 1} f (t \mu + (1-t) \eta) - f (\mu) \\
\geq \,& f (\mu + \delta |\eta- \mu|^{-1} (\eta - \mu)) - f (\lambda) \\
\geq \,& \theta + f (\mu) - f (\lambda).
\end{aligned}
\end{equation}

We now assume $f (\mu) < \sigma$.
Assumption~\eqref{gj-I100} implies (see Lemma~\ref{gj-C-lemma10}
below) that there is
$\mu_0 \in \partial \Gamma^{\sigma}$ such that
$\mbox{dist} (\mu_0, T_{\mu}) = \mbox{dist} (\partial \Gamma^{\sigma}, T_{\mu})$
and therefore $T_{\mu_0}$ is parallel to $T_{\mu}$.
By assumption~\eqref{gj-I100} again there is $R_0 > 0$ such that
$T_{\mu_0} \cap \partial \Gamma^{f (\mu)}$ is contained in the ball
$B_{R_0} (0)$.
By the convexity of $\partial \Gamma^{\sigma}$ we have for any
$\lambda \in \partial \Gamma^{\sigma}$ with $|\lambda| \geq 2 R_0$,
\begin{equation}
\label{gj-C40}
\nu_{\mu_0} \cdot \nu_{\lambda}
      \leq \max_{\zeta \in \partial \Gamma^{\sigma}, \; |\zeta| = 2 R_0}
            \nu_{\mu_0} \cdot \nu_{\zeta}
       \equiv \beta < 1. 
\end{equation}
To see this one can consider the Gauss map
$G: \partial \Gamma^{\sigma} \rightarrow \bfS^n$;
the geodesic on $\bfS^n$ from $G (\mu_0) = \nu_{\mu_0}$ to 
$G ({\lambda}) = \nu_{\lambda}$ must intersect the image of
$\partial \Gamma^{\sigma} \cap \partial B_{R_0} (0)$.

Since $\partial \Gamma^{f (\mu)}$ is smooth, there exists $\delta > 0$ such that
\[ \mbox{dist} (\partial B_{\delta}^{\beta} (\mu), \partial \Gamma^{f (\mu)}) > 0 \]
where
$\partial B_{\delta}^{\beta} (\mu) = \{\zeta \in \partial B_{\delta} (\mu): \;
    \nu_{\mu} \cdot (\zeta - \mu)/\delta \geq \sqrt{1-\beta^2}\}$.
Therefore,
\begin{equation}
\label{gj-C50}
 \theta \equiv \inf_{\zeta \in \partial B_{\delta}^{\beta} (\mu)} f (\zeta) 
        - f(\mu) > 0. 
\end{equation}

For any $\lambda \in \partial \Gamma^{\sigma}$ with $|\lambda| \geq 2 R_0$,
let $P$ be the $2$-plane through $\mu$ spanned by $\nu_{\mu}$ and 
$\nu_{\lambda}$ (translated to $\mu$), and $L$ the line through $\mu$ and 
parallel to $P \cap T_{\lambda}$. 
From \eqref{gj-C40} we see that $L$ intersects
 $\partial B_{\delta}^{\beta} (\mu)$ at a unique point $\zeta$. 
By the concavity of $f$,
\begin{equation}
\label{gj-C60}
  \sum f_i (\lambda) (\mu_i - \lambda_i)
 = \sum f_i (\lambda) (\zeta_i - \lambda_i)
  \geq f (\zeta) - f (\lambda) \geq \theta + f(\mu) - f (\lambda). 
\end{equation}
This completes the proof for case ({\bf b}). 
\end{proof}

A careful examination of the above proof for case ({\bf b}) shows that it
actually works for case ({\bf a}) too (with some obvious modifications).
In what follows we shall extend the ideas to give a proof of 
Theorem~\ref{3I-th3}. 

We need the following lemma from \cite{Guan12a}.

\begin{lemma}[\cite{Guan12a}]
\label{3I-C-lemma2}
Let $\mu \in \partial \Gamma^{\sigma}$.
Then for any $t > 0$, the part of $\partial \Gamma^{\sigma}$
\[ \Sigma_t = \Sigma_t  (\mu) := \{\lambda \in \partial \Gamma^{\sigma}:
       (\lambda - \mu) \cdot \nu_{\mu} \leq t\} \]
is a convex cap with smooth compact boundary on the plane
$t \nu_{\mu} + T_{\mu} \partial \Gamma^{\sigma}$.
\end{lemma}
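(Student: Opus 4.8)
The plan is to reduce the whole statement to one fact: that $\Sigma_t$ is bounded for every $t>0$. Write $K:=\ol{\Gamma^\sigma}=\{\lambda\in\Gamma: f(\lambda)\geq\sigma\}$. By \eqref{3I-30} this is convex, by \eqref{3I-20} its boundary $\partial\Gamma^\sigma=\{f=\sigma\}$ is a smooth hypersurface with $Df\neq0$, and by \eqref{3I-40} (together with $\sigma>0$) $K$ is closed in $\bfR^n$; so $K$ is a noncompact closed convex body with smooth boundary, inner unit normal $\nu_\mu=Df(\mu)/|Df(\mu)|$ at $\mu$, and (since $\nu_\mu$ has strictly positive entries by \eqref{3I-20}, and $\Gamma_n\subseteq\Gamma$ forces $\Gamma_n\subseteq\mathrm{rec}(K)$) the ray $\mu+s\nu_\mu$ lies in $K$ for all $s\geq0$, so $K$ is a graph $x_n=\rho(x')$ of a smooth convex function $\rho\geq0$, $\rho(0)=0$, over its projection to $T_\mu=T_\mu\partial\Gamma^\sigma$. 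With $h(\lambda):=(\lambda-\mu)\cdot\nu_\mu$ one has $h\geq0$ on $K$, $h(\mu)=0$, and $\Sigma_t=\partial\Gamma^\sigma\cap\{0\leq h\leq t\}$ is exactly the graph of $\rho$ over $\Omega_t:=\{\rho\leq t\}$, a convex subset of $T_\mu$.

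The crux is the boundedness of $\Sigma_t$, and this is the only place assumption \eqref{gj-I100} enters. I would argue by contradiction: if $\Sigma_t$ were unbounded, choose $\lambda_k\in\Sigma_t$ with $|\lambda_k|\to\infty$ and (passing to a subsequence) $\lambda_k/|\lambda_k|\to v$; closedness and convexity of $K$ give $v\in\mathrm{rec}(K)$, while boundedness of $h$ on $\Sigma_t$ gives $v\cdot\nu_\mu=0$, so the ray $\{\mu+sv:s\geq0\}$ lies in $K\cap T_\mu=\partial\Gamma^\sigma\cap T_\mu$. Now fix $\sigma'$ with $0<\sigma'<\sigma$; then $\mu\in\Gamma^{\sigma'}$ and $T_\mu\partial\Gamma^{f(\mu)}=T_\mu$, so \eqref{gj-I100} forces $\partial\Gamma^{\sigma'}\cap T_\mu$ to be nonempty and compact. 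On the other hand $K\subset\ol{\Gamma^{\sigma'}}$, so $\ol{\Gamma^{\sigma'}}\cap T_\mu$ is a convex subset of the affine space $T_\mu$ containing the ray $\{\mu+sv\}$, hence unbounded; it is moreover proper in $T_\mu$ (otherwise $f\equiv\sigma'$ on a hyperplane, forcing all level sets of $f$ to be parallel hyperplanes and contradicting the non-emptiness half of \eqref{gj-I100}), and it is bounded away from $\partial\Gamma$ since $f\to\leq0<\sigma'$ there. For $n\geq3$ the convex-geometry fact that a proper convex subset of $\bfR^{n-1}$ with bounded relative boundary is itself bounded then contradicts the compactness of $\partial\Gamma^{\sigma'}\cap T_\mu$; for $n=2$ this last step degenerates and one must use in addition \eqref{3I-20}, which makes $\nu_\mu$ have strictly positive entries and thereby rules out $\partial\Gamma^\sigma$ carrying a straight ray tangent to $T_\mu$. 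Since $\Sigma_t$ is a closed subset of the closed set $\partial\Gamma^\sigma$, boundedness yields compactness. (Alternatively, this boundedness may simply be quoted from \cite{Guan12a}, where it is established inside the proof of Proposition~2.19.)

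Granting that every $\Sigma_t$ is compact, the remaining assertions are formal. Since $\partial\Gamma^\sigma$ is noncompact it lies in no slab $\{0\leq h\leq t\}$; hence no supporting hyperplane of $K$ other than $T_\mu$ is parallel to $T_\mu$ (a parallel supporting hyperplane at $\lambda\neq\mu$ would trap $K$ in a slab), so for each $t>0$ the affine hyperplane $\{h=t\}$ meets $\partial\Gamma^\sigma$ transversally. Therefore $\partial\Sigma_t=\partial\Gamma^\sigma\cap\{h=t\}$ is a smooth hypersurface, compact as a closed subset of the compact $\Sigma_t$, and contained by definition in $t\nu_\mu+T_\mu\partial\Gamma^\sigma$. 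Finally $\Sigma_t=\mathrm{graph}(\rho|_{\Omega_t})$ with $\Omega_t$ a compact (by the crux) convex (since $\rho$ is convex) domain and $\rho$ smooth and convex, so $\Sigma_t$ is a convex cap with the stated properties.

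I expect the only real obstacle to be the boundedness of $\Sigma_t$: one must pass from "$\partial\Gamma^\sigma\cap T_\mu$ contains a ray'' to "$\partial\Gamma^{\sigma'}\cap T_\mu$ is unbounded'' and so contradict the compactness half of \eqref{gj-I100}, which is transparent in dimension $\geq3$ but in dimension $2$ genuinely needs the positivity \eqref{3I-20} (equivalently, that the normals $\nu_\lambda$ have strictly positive components). Everything else is either convexity bookkeeping or a transversality remark.
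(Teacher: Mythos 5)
The lemma is cited from \cite{Guan12a} and not proved in this paper, so there is no in-paper proof to compare against; I will address the proposal on its own merits.

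Your reduction is the right one: view $\ol{\Gamma^\sigma}$ as an epigraph over $T_\mu$, note that $\Sigma_t$ is the graph of the convex function $\rho$ over $\Omega_t=\{\rho\le t\}$, observe that unboundedness of some $\Sigma_t$ forces (by a recession--cone limit) a full ray of $\partial\Gamma^\sigma$ to lie in $T_\mu$, and then try to contradict \eqref{gj-I100} by descending to a smaller level $\sigma'<\sigma$ so that $\mu\in\Gamma^{\sigma'}$. The transversality remark and the ``convex cap'' bookkeeping at the end are fine. Two points need repair, one minor and one genuine.

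\emph{Minor: properness.} The parenthetical ``otherwise $f\equiv\sigma'$ on a hyperplane'' is not what follows from $\ol{\Gamma^{\sigma'}}\cap T_\mu=T_\mu$; that hypothesis gives only $f\ge\sigma'$ on $T_\mu$. The clean argument is: since $\mu\in\Gamma^{\sigma'}\cap T_\mu$, the set $\ol{\Gamma^{\sigma'}}\cap T_\mu$ has nonempty relative interior $\Gamma^{\sigma'}\cap T_\mu$, and its relative boundary in $T_\mu$ equals $\partial\Gamma^{\sigma'}\cap T_\mu$; if it were all of $T_\mu$ this relative boundary would be empty, contradicting the nonemptiness half of \eqref{gj-I100}. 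This is what you need anyway to identify the relative boundary with $\partial\Gamma^{\sigma'}\cap T_\mu$ before invoking the compactness half.

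\emph{Genuine gap: the case $n=2$.} You correctly observe that the convex--geometry fact (proper closed convex set with compact relative boundary is bounded) degenerates when $T_\mu$ is one-dimensional, since a closed half-line in $\bfR^1$ is a proper convex set whose relative boundary is a single point. But the proposed fix --- that positivity of the components of $\nu_\mu$ ``rules out $\partial\Gamma^\sigma$ carrying a straight ray tangent to $T_\mu$'' --- does not obviously do so: for $n=2$ the direction $v\perp\nu_\mu$ has one positive and one negative component, which is perfectly compatible with $v\in\ol\Gamma$ for many admissible cones (e.g.\ $\Gamma=\Gamma_1$), and \eqref{3I-20} only forces the normal $\nu_{\mu+sv}$ to be constant along the putative ray, which by itself is not a contradiction. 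In your framework, if a ray lies in $\partial\Gamma^\sigma\cap T_\mu$, then $\ol{\Gamma^{\sigma'}}\cap T_\mu$ is a closed half-line whose single endpoint is exactly $\partial\Gamma^{\sigma'}\cap T_\mu$; this is compact and nonempty, so \eqref{gj-I100} is not violated by the argument as written. To close the case $n=2$ one has to use \eqref{gj-I100} in a genuinely different way (for instance, by taking $\lambda$ to be a point far out on the ray itself --- note $f(\lambda)=\sigma>\sigma'$, and $T_\lambda\partial\Gamma^{f(\lambda)}=T_\mu$ because the level curve is straight there --- and then varying $\sigma'$ and tracking how the compact sets $\partial\Gamma^{\sigma'}\cap T_\mu$ behave, or some comparable quantitative argument). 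As it stands, the dimension-two case of the key boundedness claim is not established, so the proof is incomplete.
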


\begin{lemma}
\label{gj-C-lemma10}
The set 
\[  L^{\sigma} (\mu) = \Big\{\zeta \in \partial \Gamma^{\sigma}:
\nu_{\mu} \cdot (\zeta - \mu) 
    = \min_{\lambda \in \partial \Gamma^{\sigma}}
                              \nu_{\mu} \cdot (\lambda - \mu)\Big\} \]
is nonempty and compact. Moreover, $\nu_{\lambda} = \nu_{\mu}$
for all $\lambda \in L^{\sigma} (\mu)$. 
\end{lemma}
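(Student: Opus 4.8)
The plan is to show both parts of Lemma~\ref{gj-C-lemma10} by exploiting the convexity and noncompactness of $\partial \Gamma^{\sigma}$ together with assumption~\eqref{gj-I100}. First I would observe that the affine function $h(\lambda) = \nu_{\mu} \cdot (\lambda - \mu)$ restricted to $\partial \Gamma^{\sigma}$ is bounded below: since $\partial \Gamma^{\sigma}$ is convex and lies on one side of its tangent plane $T_{\mu}$ at $\mu$, with $\nu_{\mu}$ the inward normal, we have $h \geq 0$ on $\partial \Gamma^{\sigma}$, so the infimum $m := \min_{\partial \Gamma^{\sigma}} h \geq 0$ exists as a finite value (a priori just an infimum). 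To see it is attained and that $L^{\sigma}(\mu)$ is compact, I would use Lemma~\ref{3I-C-lemma2}: for any $t > m$ the slice $\Sigma_t(\mu)$ is a compact convex cap, and $L^{\sigma}(\mu) = \bigcap_{t > m} \Sigma_t(\mu)$ is a nested intersection of nonempty compact sets (nonempty because a minimizing sequence eventually enters every $\Sigma_t$), hence nonempty and compact. Actually it is cleaner to fix one $t_0 > m$: then $\Sigma_{t_0}(\mu)$ is compact, the continuous function $h$ attains its minimum over this compact cap, and that minimum equals $m$ since points outside $\Sigma_{t_0}$ have $h > t_0 > m$; thus $L^{\sigma}(\mu) = \{h = m\} \cap \Sigma_{t_0}(\mu)$ is a closed subset of a compact set, hence compact and nonempty.

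Next I would prove the normal statement. Let $\lambda \in L^{\sigma}(\mu)$, so $\lambda$ minimizes the affine functional $h$ over $\partial \Gamma^{\sigma}$. Since $\partial \Gamma^{\sigma}$ is a smooth convex hypersurface and $\lambda$ is an interior minimizer of a linear function along it (recall $L^{\sigma}(\mu)$ is compact, so $\lambda$ is not ``at infinity''), the gradient of $h$, namely $\nu_{\mu}$, must be normal to $\partial \Gamma^{\sigma}$ at $\lambda$; that is, $\nu_{\mu}$ is parallel to $\nu_{\lambda}$. To pin down the sign, note that convexity forces $\partial \Gamma^{\sigma}$ to lie in the halfspace $\{h \geq m\}$ locally near $\lambda$ on the side the inward normal $\nu_{\lambda}$ points, and since $h$ increases in the direction $\nu_{\mu}$, we must have $\nu_{\lambda} = \nu_{\mu}$ rather than $\nu_{\lambda} = -\nu_{\mu}$. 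Equivalently, the supporting hyperplane to $\Gamma^{\sigma}$ at $\lambda$ is $T_{\mu}$ translated to $\lambda$, and since $\nu_{\mu}$ is the inner normal of this supporting plane for the convex body $\Gamma^{\sigma}$, it coincides with $\nu_{\lambda}$.

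The step I expect to require the most care is verifying that $m > 0$ is not actually needed but that the minimizing set does not ``escape to infinity'' — this is precisely where assumption~\eqref{gj-I100} enters, via Lemma~\ref{3I-C-lemma2}. Without it, $\partial \Gamma^{\sigma}$ could be a ``cylinder-like'' surface (as in the linear case) whose slices $\Sigma_t$ are noncompact, and the minimum of $h$ might only be an asymptotic infimum. So the crux is: invoke Lemma~\ref{3I-C-lemma2} to guarantee some slice $\Sigma_{t_0}(\mu)$ is compact, which is exactly what makes the minimization a genuine compactness argument. Once that is secured, the rest is the standard first-order condition for a linear functional on a smooth convex hypersurface, and the sign determination is immediate from the orientation of the inward normal. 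I would write the argument in that order: (i) $h \geq 0$ and $m$ finite; (ii) pick $t_0 > m$, apply Lemma~\ref{3I-C-lemma2} to get compact $\Sigma_{t_0}$; (iii) conclude $L^{\sigma}(\mu)$ nonempty and compact; (iv) first-order condition gives $\nu_{\lambda} \parallel \nu_{\mu}$; (v) sign check gives equality.
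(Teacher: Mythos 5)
Your argument silently assumes $\mu \in \partial \Gamma^{\sigma}$, but the lemma is stated (and used) for general $\mu \in \Gamma$, including $f(\mu) \neq \sigma$. Two steps break down outside that special case. First, you write that $\partial \Gamma^{\sigma}$ ``lies on one side of its tangent plane $T_{\mu}$ at $\mu$,'' so $h \geq 0$ on $\partial \Gamma^{\sigma}$. But in the paper's notation $T_{\mu}$ and $\nu_{\mu}$ refer to the level hypersurface $\partial \Gamma^{f(\mu)}$ through $\mu$, not to $\partial \Gamma^{\sigma}$. When $f(\mu) < \sigma$ one still has $h > 0$ by concavity of $f$ (since $Df(\mu)\cdot(\lambda-\mu) \geq f(\lambda)-f(\mu) = \sigma - f(\mu)$), but when $f(\mu) > \sigma$ assumption \eqref{gj-I100} makes $T_{\mu}$ actually cross $\partial \Gamma^{\sigma}$, so $h$ takes both signs and $h \geq 0$ is simply false; you then have no lower bound for $h$ from the argument as written. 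Second, you invoke Lemma~\ref{3I-C-lemma2} to produce a compact cap $\Sigma_{t_0}(\mu)$, but that lemma is stated for $\mu \in \partial \Gamma^{\sigma}$ — the slices there are taken with respect to the normal of $\partial \Gamma^{\sigma}$ at a point of $\partial \Gamma^{\sigma}$. If $f(\mu) \neq \sigma$, you cannot directly slice $\partial \Gamma^{\sigma}$ by $\nu_{\mu}$ via that lemma without first locating a point of $\partial \Gamma^{\sigma}$ with normal $\nu_{\mu}$ — which is essentially what $L^{\sigma}(\mu)$ being nonempty asserts, so the argument is circular.

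The paper's (very terse) proof explicitly splits into the two nontrivial cases: for $f(\mu) < \sigma$ it appeals to Lemma~\ref{3I-C-lemma2}, and for $f(\mu) > \sigma$ it appeals directly to \eqref{gj-I100}, which says precisely that $T_{\mu} \cap \partial \Gamma^{\sigma}$ is nonempty and compact, and from which one deduces that the piece of $\partial \Gamma^{\sigma}$ cut off by $T_{\mu}$ on the $h \leq 0$ side is compact. Your write-up handles neither of these cases; it only covers $f(\mu) = \sigma$, where the conclusion is essentially immediate ($\mu$ itself is a minimizer and $m = 0$). Part (iv)--(v) of your outline — the first-order condition forcing $\nu_{\lambda} \parallel \nu_{\mu}$, and the sign determination — is fine, and can be shortened by noting both $\nu_{\lambda}$ and $\nu_{\mu}$ are unit vectors in the open positive orthant $\Gamma_n$ (since $f_i > 0$), so parallel forces equal.
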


\begin{proof}
If $f (\mu) < \sigma$ this follows from Lemma~\ref{3I-C-lemma2}
while if $f (\mu) > \sigma$ we see this directly from 
assumption~\eqref{gj-I100}.
\end{proof}

\begin{lemma}
\label{gj-C-lemma20}
If $R' \geq R > \max \{|\lambda|: 
     \lambda \in L^{\sigma} (\mu)\}$ 
then $\beta_{R'} (\mu, \sigma) \leq \beta_R (\mu, \sigma) < 1$.
\end{lemma}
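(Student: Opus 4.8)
Write $\rho_0:=\max\{|\lambda|:\lambda\in L^\sigma(\mu)\}$, finite by the compactness in Lemma~\ref{gj-C-lemma10}, and recall that $\beta_R(\mu,\sigma)$ records how close the normals $\nu_\lambda$ of points $\lambda\in\partial\Gamma^\sigma$ with $|\lambda|\ge R$ (or $|\lambda|=R$) can come to $\nu_\mu$, i.e.\ it is a supremum (resp.\ maximum) of $\nu_\mu\cdot\nu_\lambda$; in either reading the assertion of the lemma is that this quantity is $<1$ and nonincreasing in $R$ as soon as $R>\rho_0$. My plan reduces everything to two facts, a characterization and a Gauss-map crossing.

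\emph{Characterization.} For $\lambda\in\partial\Gamma^\sigma$, $\nu_\lambda=\nu_\mu$ if and only if $\lambda\in L^\sigma(\mu)$. The implication ``$\Leftarrow$'' is part of Lemma~\ref{gj-C-lemma10}. For ``$\Rightarrow$'', note that $T_\lambda$ is a supporting hyperplane of the convex set $\overline{\Gamma^\sigma}$ at $\lambda$ with inner normal $\nu_\lambda$; so if $\nu_\lambda=\nu_\mu$ then $\nu_\mu\cdot(\zeta-\lambda)\ge0$ for all $\zeta\in\overline{\Gamma^\sigma}\supseteq\partial\Gamma^\sigma$, with equality at $\zeta=\lambda$, which says exactly that $\lambda$ minimizes $\zeta\mapsto\nu_\mu\cdot(\zeta-\mu)$ over $\partial\Gamma^\sigma$, i.e.\ $\lambda\in L^\sigma(\mu)$. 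Granting this, $\beta_R(\mu,\sigma)<1$ for $R>\rho_0$ is a compactness argument: $\partial\Gamma^\sigma\cap\partial B_R(0)$ is compact and, by the characterization, disjoint from $L^\sigma(\mu)$ (which lies in the ball of radius $\rho_0$), so $\nu_\mu\cdot\nu_\lambda<1$ at each of its points, and continuity of the Gauss map $G:\lambda\mapsto\nu_\lambda$ yields a uniform strict bound.

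\emph{Monotonicity.} This I would get from the Gauss-map idea sketched after \eqref{gj-C40}. Given $\lambda'\in\partial\Gamma^\sigma$ with $|\lambda'|\ge R>\rho_0$, let $\gamma\subset\bfS^{n-1}$ be the spherical geodesic from $\nu_\mu$ to $\nu_{\lambda'}$; it stays in the normal image $\mathcal N:=G(\partial\Gamma^\sigma)$, which is geodesically convex, being the intersection of $\bfS^{n-1}$ with the dual of the recession cone of $\overline{\Gamma^\sigma}$. Put $A:=G(\{|\lambda|\le R\})$ and $B:=G(\{|\lambda|\ge R\})$; then $A\cup B=\mathcal N$, and by the characterization $A\cap B=G(\partial\Gamma^\sigma\cap\partial B_R(0))$ — a direction attained both at a point of norm $\le R$ and at one of norm $\ge R$ is attained all along the segment between them, which lies in a flat face of $\partial\Gamma^\sigma$ and so crosses $\partial B_R(0)$. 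Since $\nu_\mu\in A\setminus B$ and $\nu_{\lambda'}\in B$, connectedness of $\gamma$ forces $\nu_{\lambda'}\in A\cap B$ or $\gamma\cap(A\cap B)\ne\emptyset$; in either case there is $\zeta\in\partial\Gamma^\sigma$ with $|\zeta|=R$ and $\nu_\zeta\in\gamma$, and as $\omega\mapsto\nu_\mu\cdot\omega$ decreases along $\gamma$ away from $\nu_\mu$ we get $\nu_\mu\cdot\nu_\zeta\ge\nu_\mu\cdot\nu_{\lambda'}$; hence $\beta_R(\mu,\sigma)\ge\nu_\mu\cdot\nu_{\lambda'}$, and taking the supremum over such $\lambda'$ gives $\beta_{R'}(\mu,\sigma)\le\beta_R(\mu,\sigma)$.

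I expect the crossing to be the main obstacle. The two delicate points are that $\gamma$ really remains inside $\mathcal N$ — one must control limiting normal directions that are not genuinely attained on $\partial\Gamma^\sigma$ — and that $B$, the Gauss image of the \emph{unbounded} outer part of $\partial\Gamma^\sigma$, is relatively closed in $\gamma$; the latter is precisely where assumption~\eqref{gj-I100} is used, since compactness of each face $\partial\Gamma^\sigma\cap T_\lambda$ is what allows one to pass to the limit along a sequence $\lambda_k$ with $G(\lambda_k)\to\omega$ and $|\lambda_k|\to\infty$ and conclude $\omega\in B$. Once these are settled, the two facts combine to give $\beta_{R'}(\mu,\sigma)\le\beta_R(\mu,\sigma)<1$ for all $R'\ge R>\rho_0$.
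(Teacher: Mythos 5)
Your proposal takes essentially the same approach as the paper: the first part rests on the observation that $\nu_\lambda=\nu_\mu$ exactly on $L^\sigma(\mu)$ (which the paper states more briefly as ``$\nu_\mu\neq\nu_\lambda$ by the convexity of $\partial\Gamma^\sigma$''), and the monotonicity is precisely the paper's Gauss-map crossing argument — the geodesic from $\nu_\mu$ to $\nu_{\lambda'}$ must pass through $G(\partial\Gamma^\sigma\cap\partial B_R(0))$, after which the dot product decreases monotonically along the geodesic. The two ``delicate points'' you flag at the end (that $\gamma$ stays in the Gauss image and that the outer piece is relatively closed) are also left implicit in the paper's terse proof, so your version is, if anything, somewhat more explicit about what needs checking.
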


\begin{proof}
We first note that 
 $\nu_{\mu} = \nu_{\mu^{\sigma}}$ and therefore 
$\beta_R (\mu, \sigma) = \beta_R (\mu^{\sigma}, \sigma)$
for $\mu^{\sigma} \in L^{\sigma} (\mu)$. 
For any $\lambda \in \partial \Gamma^{\sigma} \setminus L^{\sigma} (\mu)$ we have $0 < \nu_{\mu} \cdot \nu_{\lambda} < 1$
since $\nu_{\mu}, \nu_{\lambda} \in \Gamma_n$ and 
$\nu_{\mu} \neq \nu_{\lambda}$ 
by the convexity of $\partial \Gamma^{\sigma}$. Therefore
$\beta_R < 1$ for 
$R > \max \{|\mu^{\sigma}|: \mu^{\sigma} \in L^{\sigma} (\mu)\}$
by compactness (of
$\partial \Gamma^{\sigma} \cap \partial B_R$).
To see that $\beta_R$ is non-increasing in $R$
we consider the Gauss map
$G: \partial \Gamma^{\sigma} \rightarrow \bfS^n$. 
Suppose $\lambda \in \partial \Gamma^{\sigma}$
and $|\lambda| > R > \max \{|\mu^{\sigma}|: \mu^{\sigma} \in L^{\sigma} (\mu)\}$. Then 
the geodesic on $\bfS^n$ from $G (L^{\sigma} (\mu))$ to $G ({\lambda})$
must intersect the image of
$\partial \Gamma^{\sigma} \cap \partial B_{R} (0)$, that is, it
contains a point $G (\lambda_R)$ for some 
$\lambda_R \in \partial \Gamma^{\sigma} \cap \partial B_{R} (0)$. 
Consequently, 
\[ \nu_{\mu} \cdot \nu_{\lambda} 
     \leq \nu_{\mu} \cdot \nu_{\lambda_R} \leq \beta_R. \]
So Lemma~\ref{gj-C-lemma20} holds.
\end{proof}

\begin{lemma}
\label{gj-C-lemma30}
Let $K$ be a compact subset of $\Gamma$ 
and $\sup_{\partial \Gamma}  f < a \leq b < \sup_{\Gamma}  f$. 
Then
\begin{equation}
\label{gj-C100}
  R_0 (K, [a, b]) \equiv
  \sup_{\mu \in K} \sup_{a \leq \sigma \leq b} \max_{\lambda \in L^{\sigma} (\mu)} 
    |\lambda| < \infty 
\end{equation}
and 
\begin{equation}
\label{gj-C110}
0 < \beta_{R'} (K, [a, b]) \leq \beta_R (K, [a, b])  < 1, 
       \;\; \forall \,R' \geq R > R_0 (K, [a, b]), 
\end{equation}
where 
\[ \beta_R (K, [a, b]) \equiv \sup_{\mu \in K} \beta_R (\mu, [a, b])
   \equiv \sup_{\mu \in K} \sup_{a \leq \sigma \leq b} \beta_R (\mu, \sigma). \]
\end{lemma}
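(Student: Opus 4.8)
The statement bundles together uniform‑in‑$(\mu,\sigma)$ versions of Lemmas~\ref{gj-C-lemma10} and~\ref{gj-C-lemma20}, and I would prove it by a compactness argument on the compact set $K\times[a,b]$. Two preliminary observations make this work. First, by \eqref{3I-20} the Gauss map $\mu\mapsto\nu_\mu=Df(\mu)/|Df(\mu)|$ is continuous on $\Gamma$, and each level set $\partial\Gamma^\sigma=f^{-1}(\sigma)$ depends locally $C^1$‑continuously on $\sigma$. Second, one has the identity
\[ L^\sigma(\mu)=\{\lambda\in\partial\Gamma^\sigma:\nu_\lambda=\nu_\mu\}, \]
where ``$\subseteq$'' is Lemma~\ref{gj-C-lemma10} and ``$\supseteq$'' follows because, for $\mu^\sigma\in L^\sigma(\mu)$, $T_\lambda\partial\Gamma^\sigma$ and $T_{\mu^\sigma}\partial\Gamma^\sigma$ are then parallel supporting hyperplanes of the convex body $\overline{\Gamma^\sigma}$ with the same outward normal $-\nu_\mu$, hence coincide (the relevant supporting function is finite since $L^\sigma(\mu)\ne\emptyset$), so $\lambda\in\partial\Gamma^\sigma\cap T_{\mu^\sigma}\partial\Gamma^\sigma=L^\sigma(\mu)$. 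This identity makes the multifunction $(\mu,\sigma)\mapsto L^\sigma(\mu)$ accessible to limit arguments.

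Granting \eqref{gj-C100}, I would derive \eqref{gj-C110} as follows. The monotonicity $\beta_{R'}(K,[a,b])\le\beta_R(K,[a,b])$ for $R'\ge R>R_0(K,[a,b])$ is immediate from Lemma~\ref{gj-C-lemma20} applied pointwise (its hypothesis $R>\max\{|\lambda|:\lambda\in L^\sigma(\mu)\}$ holds uniformly once $R>R_0(K,[a,b])$) followed by taking suprema. For the strict bound $\beta_R(K,[a,b])<1$, suppose it fails; then there are $(\mu_k,\sigma_k)\in K\times[a,b]$ and, by the Gauss‑map argument in the proof of Lemma~\ref{gj-C-lemma20}, points $\lambda_k\in\partial\Gamma^{\sigma_k}$ with $|\lambda_k|=R$ and $\nu_{\mu_k}\cdot\nu_{\lambda_k}\to1$. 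A subsequential limit $(\mu_*,\sigma_*,\lambda_*)$ satisfies $|\lambda_*|=R$, $\lambda_*\in\partial\Gamma^{\sigma_*}$ and (equality in Cauchy--Schwarz) $\nu_{\lambda_*}=\nu_{\mu_*}$, hence $\lambda_*\in L^{\sigma_*}(\mu_*)$ by the identity above, so $R=|\lambda_*|\le R_0(K,[a,b])<R$, a contradiction. Finally $\beta_R(K,[a,b])>0$ because $\nu_\mu,\nu_\lambda$ lie in the open positive orthant, so each pairing $\nu_\mu\cdot\nu_\lambda$ is positive.

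The substance is \eqref{gj-C100}, which I would prove by contradiction. If $R_0(K,[a,b])=\infty$, pick $\mu_k\in K$, $\sigma_k\in[a,b]$, $\lambda_k\in L^{\sigma_k}(\mu_k)$ with $|\lambda_k|\to\infty$, and pass to a subsequence with $\mu_k\to\mu_*$, $\sigma_k\to\sigma_*$, $\lambda_k/|\lambda_k|\to w$, $|w|=1$. Since $\lambda_k\in\overline{\Gamma^{\sigma_k}}\subseteq\overline{\Gamma^a}$, $w$ is a nonzero recession direction of $\overline{\Gamma^a}$; and since $T_{\lambda_k}\partial\Gamma^{\sigma_k}$ supports $\overline{\Gamma^{\sigma_k}}$ with outward normal $-\nu_{\mu_k}$, testing against a fixed $\bar\lambda$ with $f(\bar\lambda)>b$ gives $\langle\nu_{\mu_k},\lambda_k\rangle\le\langle\nu_{\mu_k},\bar\lambda\rangle$, a bounded quantity; dividing by $|\lambda_k|$ yields $\langle\nu_{\mu_*},w\rangle\le0$. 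To contradict this I would show \eqref{gj-I100} forces $\langle Df(\eta),w\rangle>0$ for every $\eta\in\Gamma^a$: concavity and $w\in\mathrm{rec}(\overline{\Gamma^a})$ give $\langle Df(\eta),w\rangle\ge0$, and if equality held then $t\mapsto f(\eta+tw)$ would be concave, bounded below, with vanishing derivative at $0$, hence constant equal to $c:=f(\eta)$; the ray $\{\eta+tw:t\ge0\}$ would then lie in $\partial\Gamma^c\cap T_\eta\partial\Gamma^c$, contradicting the compactness of that set (a consequence of Lemma~\ref{3I-C-lemma2}, applied at $\eta\in\partial\Gamma^c$). Feeding in $\eta=\lambda_k+\epsilon_k\nu_{\lambda_k}\in\Gamma^{\sigma_k}\subseteq\Gamma^a$ for suitable $\epsilon_k\downarrow0$ (so that $\nu_\eta\to\nu_{\mu_*}$) and making this transversality quantitative and uniform --- again via \eqref{gj-I100}, and via the homogeneity of $Df$ along rays where available --- one contradicts $\langle\nu_{\mu_*},w\rangle\le0$, proving $R_0(K,[a,b])<\infty$.

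The step I expect to be the main obstacle is precisely this last one: upgrading the qualitative strict transversality $\langle Df(\eta),w\rangle>0$ to an estimate that survives the degenerate limit $\eta\to\infty$ --- equivalently, bounding $\min_{\overline{\Gamma^a}}\langle\nu_{\mu_k},\cdot\rangle$ from below uniformly in $k$, i.e. keeping the normals $\nu_{\mu_k}$ uniformly inside the interior of the dual cone of $\mathrm{rec}(\overline{\Gamma^a})$. This is exactly where the non‑pointwise force of \eqref{gj-I100} is needed (ruling out not merely rays but also ``almost flat'' behaviour of the level surfaces in their tangent planes near infinity), and it is why a subsolution‑free, purely convex‑geometric treatment is delicate, especially when $f$ is not homogeneous and when $\mu_*\notin\overline{\Gamma^a}$. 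Once \eqref{gj-C100} is secured, \eqref{gj-C110} follows as above, and the proof of Theorem~\ref{3I-th3} then proceeds exactly as the proof of Theorem~\ref{3I-th3'} with the constants $R_0,\beta$ replaced throughout by the uniform constants $R_0(K,[a,b]),\beta_R(K,[a,b])$ just constructed.
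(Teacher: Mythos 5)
Your argument for \eqref{gj-C110} given \eqref{gj-C100} matches the paper's. Your treatment of \eqref{gj-C100}, however, diverges and is left incomplete at exactly the step you flag. The gap is real in your write-up, but the obstacle you fear --- a quantitative transversality surviving the limit $\eta\to\infty$ --- is illusory. After extracting $\mu_k\to\mu_*\in K$, $\sigma_k\to\sigma_*$ and $w=\lim\lambda_k/|\lambda_k|\in\mathrm{rec}(\overline{\Gamma^a})$, the normal $\nu_{\mu_*}$ is a \emph{fixed} vector, and to rule out $\langle\nu_{\mu_*},w\rangle\le 0$ it suffices to test at one fixed point: take $\zeta\in L^a(\mu_*)$, which is nonempty with $\nu_\zeta=\nu_{\mu_*}$ by Lemma~\ref{gj-C-lemma10}. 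The function $t\mapsto f(\zeta+tw)$, $t\ge 0$, stays in $\overline{\Gamma^a}\cap\Gamma$, is concave, $\ge a$, with derivative $|Df(\zeta)|\,\langle\nu_{\mu_*},w\rangle$ at $t=0$; if $\langle\nu_{\mu_*},w\rangle\le 0$ the function must be constant $\equiv a$, so $\{\zeta+tw:t\ge 0\}$ is an unbounded subset of $\Sigma_\epsilon(\zeta)\subset\partial\Gamma^a$ for every $\epsilon>0$, contradicting the compactness in Lemma~\ref{3I-C-lemma2}. No homogeneity of $Df$ and no uniformity in $k$ are needed. Thus your approach is salvageable, but as written the crucial last step is missing.

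The paper's own proof is genuinely different and bypasses the recession cone entirely. It first proves the \emph{pointwise} statements $R_0(\mu,[a,b])<\infty$ (trapping $L^{[a,b]}(\mu)$ between $\Sigma_t(\mu)$ and $T_{\mu^b}$, via Lemmas~\ref{3I-C-lemma2} and~\ref{gj-C-lemma10}) and $\beta_R(\mu,[a,b])<1$ together with its monotonicity in $R$ (a compactness argument over $\sigma\in[a,b]$ combined with Lemma~\ref{gj-C-lemma20}). The uniform \eqref{gj-C100} then falls out by contradiction: if $\mu_k\to\mu_*\in K$ and $|\lambda_k|\to\infty$ with $\lambda_k\in L^{\sigma_k}(\mu_k)$, then $\nu_{\mu_k}\cdot\nu_{\lambda_k}=1$ and $\nu_{\mu_k}\to\nu_{\mu_*}$ force $\nu_{\mu_*}\cdot\nu_{\lambda_k}\to 1$, whereas $\nu_{\mu_*}\cdot\nu_{\lambda_k}\le\beta_{R_1}(\mu_*,[a,b])<1$ for a fixed $R_1>R_0(\mu_*,[a,b])$ once $|\lambda_k|>R_1$. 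This route is slicker than yours: it recycles the already-proved pointwise $\beta$-bound at the limit point $\mu_*$ rather than analysing the asymptotic geometry of $\overline{\Gamma^a}$ directly, and in particular never needs a recession-direction identity or the positivity claim you were trying to establish.
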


\begin{proof}
We first show that for any $\mu \in \Gamma$, 
\begin{equation}
\label{gj-C120}
 R_0 (\mu, [a, b]) \equiv
  \sup_{a \leq \sigma \leq b} \max_{\lambda \in L^{\sigma} (\mu)} 
    |\lambda| < \infty. 
\end{equation}
We consider two cases: ({\bf i}) $f (\mu) \leq a$ and 
({\bf ii}) $f (\mu) \geq b$; the case $a \leq f (\mu) \leq b$ follows 
obviously. 
In case ({\bf i}) the set 
\[ L^{[a,b]} (\mu) \equiv \bigcup_{a \leq \sigma \leq b}  L^{\sigma} (\mu) \]
is contained in the region bounded by $\Sigma_t (\mu)$ and $T_{\mu^b}$, 
where $t = \nu_{\mu} \cdot (\mu^b - \mu)$ for any $\mu^b \in L^b (\mu)$ which 
is nonempty by Lemma~\ref{gj-C-lemma10},
while in case ({\bf ii}) it is in the (bounded) subregion of $\Gamma^b$
cut by $T_{\mu}$. So in both cases \eqref{gj-C120} holds.

Next, we show by contradiction that 
\begin{equation}
\label{gj-C130}
\beta_R (\mu, [a, b]) \equiv 
       \sup_{a \leq \sigma \leq b} \beta_R (\mu, \sigma)  < 1,  
       \;\; \forall \, R > R_0 (\mu, [a, b]). 
\end{equation}
Suppose for each integer $k \geq 1$
there exists $\lambda_k \in \Gamma$ with $|\lambda_k| = R$
and $a \leq f (\lambda_k) \leq b$ such that 
\[ \nu_{\mu} \cdot \nu_{\lambda_k} \geq 1 - \frac{1}{k}. \]
Then by compactness we obtain a point $\lambda \in \Gamma$
with $a \leq f (\lambda) \leq b$, $\nu_{\mu} \cdot \nu_{\lambda} = 1$ 
and $|\lambda| = R > R_0 (\mu, f (\lambda))$. 
This contradicts Lemma~\ref{gj-C-lemma20}
from which we also see that $\beta_R (\mu, [a, b])$ is 
nonincreasing in $R$ for $R > R_0 (\mu, [a, b])$. 

Suppose now that for each integer $k \geq 1$
there exists $\mu_k \in K$, $\sigma_k \in [a, b]$ and 
$\lambda_k \in  L^{\sigma_k} (\mu_k)$ with $|\lambda_k| \geq k$. 
By the compactness of $K$ 
we may assume $\mu_k \rightarrow \mu \in K$ as $k \rightarrow \infty$.
Since $\nu_{\mu_k} \cdot \nu_{\lambda_k} = 1$, and 
$\partial \Gamma^{\sigma_k}$ is smooth
we have
\[ \lim_{k \rightarrow \infty} \nu_{\mu} \cdot \nu_{\lambda_k} 
   = 1 + \lim_{k \rightarrow \infty} (\nu_{\mu} - \nu_{\mu_k}) \cdot \nu_{\lambda_k} 
   = 1. \]
This contradicts \eqref{gj-C130} and the monotonicity of 
$\beta_R (\mu, [a, b])$, proving \eqref{gj-C100}. 
The proof of \eqref{gj-C110} is now obvious.
\end{proof}

We are now ready to prove Theorem~\ref{3I-th3}. 

\begin{proof}[Proof of Theorem~\ref{3I-th3}]
The proof below is a straightforward modification of the second part of 
the proof of Theorem~\ref{3I-th3'}; we include it here for
completeness and the reader's convenience. 

Let $\epsilon = \frac{1}{2} \mbox{dist} (K, \partial \Gamma)$ and 
\[ K^{\epsilon} = \{\mu - t {\bf 1}: \mu \in K, \; 0 \leq t \leq \epsilon\} \]
where ${\bf 1} = (1, \ldots, 1) \in \bfR^n$. Then $K^{\epsilon}$ is 
a compact subset of $\Gamma$. Let $R = 2 R_0 (K^{\epsilon}, [a, b])$,
and $\beta = \beta_R (K^{\epsilon}, [a, b])$. 
By Lemma~\ref{gj-C-lemma30}, $0 < \beta < 1$ and therefore,
since $f$ is smooth and $Df \neq 0$ everywhere, by the compactness of 
$K^{\epsilon}$ there exists 
$\delta > 0$ depending on $\beta$ and bounds on the (principal) 
curvatures of $\partial \Gamma^{f (\mu)}$ for all $\mu \in K^{\epsilon}$,
such that
\[ \inf_{\mu \in K^{\epsilon}} 
 \mbox{dist} (\partial B_{\delta}^{\beta} (\mu), \partial \Gamma^{f (\mu)}) > 0 \]
where $\partial B_{\delta}^{\beta} (\mu)$ denotes the spherical cap as
in the proof of Theorem~\ref{3I-th3'}.
Consequently,
\begin{equation}
\label{gj-C150}
 \theta \equiv \frac{1}{2} \inf_{\mu \in K^{\epsilon}} 
  \inf_{\zeta \in \partial B_{\delta}^{\beta} (\mu)}  (f (\zeta) - f(\mu)) > 0. 
\end{equation}

Next, for any 
$\lambda \in \Gamma^{[a,b]} = \ol{\Gamma^a} \setminus \Gamma^b$ 
with $|\lambda| \geq R$
and ${\mu} \in K^{\epsilon}$, as in the proof of Theorem~\ref{3I-th3'}
let $P$ be the $2$-plane through $\mu$ spanned by $\nu_{\mu}$ and 
$\nu_{\lambda}$ (translated to $\mu$), and $L$ the line through $\mu$ and 
parallel to $P \cap T_{\lambda}$. 
Since $\nu_{\mu} \cdot \nu_{\lambda} \leq \beta < 1$ by 
Lemma~\ref{gj-C-lemma30},
$L$ intersects $\partial B_{\delta}^{\beta} (\mu)$ at a unique point $\zeta$, 
and therefore by the concavity of $f$, 
\begin{equation}
\label{gj-C160}
  \sum f_i (\lambda) (\mu_i - \lambda_i)
 = \sum f_i (\lambda) (\zeta_i - \lambda_i)
  \geq f (\zeta) - f (\lambda) \geq 2 \theta + f(\mu) - f (\lambda). 
\end{equation}

Finally, since $K$ is compact, by the continuity of $f$ there exists 
$\epsilon_0 \in (0, \epsilon]$ such that 
\[ f (\mu - t {\bf 1}) \geq f (\mu) - \theta, 
   \;\; \forall \, \mu \in K, \; 0 \leq t \leq \epsilon_0. \]
Combining this with \eqref{gj-C160} gives \eqref{3I-100a'}
for $\theta (K, [a,b]) = \min \{\epsilon_0, \theta\}$. 
\end{proof}

\bigskip

\section{Interior and global estimates for second derivatives}
\label{gj-S}
\setcounter{equation}{0}

\medskip

In this section we derive
the estimates for second derivatives in Theorem~\ref{gj-th2} and
Theorem~\ref{gj-th4}.

Let
\[ W (x) = \max_{\xi\in T_{x}M,|\xi|=1}
     (\nabla_{\xi\xi} u + A^{\xi\xi} (x, u,\nabla u)) e^{\phi} \]
where $\phi$ is a function to be determined,
and assume
\[ W (x_0) = \max_{\bM} W \]
for an interior point $x_{0} \in M$.
Choose a smooth orthonormal local frame $e_{1}, \ldots, e_{n}$ about $x_{0}$ such that $\nabla_{e_i} e_j = 0$ and
$U_{ij}$ is diagonal at $x_0$.
We assume
\[ U_{11} (x_0) \geq \cdots \geq U_{nn} (x_0) \]
so
$W (x_0) = U_{11} (x_0) e^{\phi (x_0)}$.

The function
$\log U_{11} + \phi$ attains its maximum at $x_{0}$ where
\begin{equation}
\label{gj-S20}
\frac{\nabla_i U_{11}}{U_{11}} + \nabla_i \phi = 0,
\end{equation}
\begin{equation}
\label{gj-S30}
\begin{aligned}
\frac{\nabla_{ii} U_{11}}{U_{11}}
   - \Big(\frac{\nabla_i U_{11}}{U_{11}}\Big)^2 + \nabla_{ii} \phi \leq 0.
\end{aligned}
\end{equation}
By simple calculation,
\begin{equation}
\label{gj-S40}
(\nabla_i U_{11})^2 \leq (\nabla_1 U_{1i})^2 + C U_{11}^2,
\end{equation}
\begin{equation}
\label{gj-S50}
\nabla_{ii} U_{11}
  \geq \nabla_{11} U_{ii} + \nabla_{ii} A^{11} - \nabla_{11} A^{ii} - C U_{11}.
\end{equation}

Differentiating equation (\ref{3I-10'}) twice, we obtain at $x_{0}$,
\begin{equation}
\label{gj-S60}
F^{ii} \nabla_{k} U_{ii} = \nabla_k \psi + \psi_{u} \nabla_{k} u
            + \psi_{p_j} \nabla_{kj} u,
\end{equation}
and, by \eqref{gj-S20},
\begin{equation}
\label{gj-S70}
\begin{aligned}
F^{ii} \nabla_{11} U_{ii}
     + \,& F^{ij,kl} \nabla_{1} U_{ij} \nabla_{1} U_{kl} \\
  \geq \,& \psi_{p_{j}} \nabla_{11j} u + \psi_{p_{l} p_{k}} \nabla_{1k} u \nabla_{1l} u
       - C U_{11} \\
  \geq \,& \psi_{p_{j}} \nabla_j U_{11} + \psi_{p_{1} p_{1}} U_{11}^2 - C U_{11} \\
    = \,& - U_{11} \psi_{p_{j}} \nabla_j \phi + \psi_{p_{1} p_{1}} U_{11}^2 - C U_{11}.
\end{aligned}
\end{equation}
Next,
\begin{equation}
\label{gj-S80}
\begin{aligned}
F^{ii}(\nabla_{ii} A^{11} - \nabla_{11} A^{ii})
 \geq \,& F^{ii} (A^{11}_{p_{j}} \nabla_{iij} u - A^{ii}_{p_{j}} \nabla_{11j} u) \\
      \,& + F^{ii} (A^{11}_{p_i p_i} U_{ii}^2 - A^{ii}_{p_1 p_1} U_{11}^2)
          - C U_{11} \sum F^{ii} \\
 \geq \,& U_{11} F^{ii} A^{ii}_{p_{j}} \nabla_{j} \phi
          - C U_{11} \sum F^{ii} - C U_{11} \\
        & - C \sum_{i \geq 2} F^{ii} U_{ii}^2
          - U_{11}^2 \sum_{i \geq 2} F^{ii} A^{ii}_{p_1 p_1}.
\end{aligned}
\end{equation}

By \eqref{gj-S30}, \eqref{gj-S50},
\eqref{gj-S70} and \eqref{gj-S80}
we obtain
\begin{equation}
\label{gj-S90}
\begin{aligned}
\mathcal{L} \phi
  \leq \,& U_{11} \sum_{i \geq 2} F^{ii} A^{ii}_{p_1 p_1} - \psi_{p_{1} p_{1}} U_{11}
           + E + \frac{C}{U_{11}} F^{ii} U_{ii}^2 + C \sum F^{ii} + C
\end{aligned}
\end{equation}
where
\[ E = \frac{1}{U_{11}^2} F^{ii} (\nabla_i U_{11})^2
            + \frac{1}{U_{11}} F^{ij,kl} \nabla_{1} U_{ij} \nabla_{1} U_{kl}. \]

 Let
 \[ \phi = \frac{\delta |\nabla u|^2}{2} + b \eta \]
where $b$, $\delta$ are undetermined constants, $0 < \delta < 1 \leq b$, and $\eta$ is a $C^{2}$
function which may depend on $u$ but not on its derivatives.
We have
\begin{equation}
\label{gs3}
\begin{aligned}
\nabla_{i} \phi
     = \,& \delta \nabla_{j} u \nabla_{ij} u + b \nabla_{i} \eta
     = \delta \nabla_i u U_{ii} - \delta \nabla_{j} u A^{ij} + b \nabla_{i} \eta
\end{aligned}
\end{equation}
\begin{equation}
\label{gs4}
\begin{aligned}
\nabla_{ii} \phi
     = \,& \delta  \nabla_{ij} u \nabla_{ij} u + \delta \nabla_{j} u \nabla_{iij} u
            + b \nabla_{ii} \eta  \\
   \geq \,& \frac{\delta}{2} U_{ii}^2 - C \delta + \delta \nabla_{j} u \nabla_{iij} u
            + b \nabla_{ii} \eta.
\end{aligned}
\end{equation}
By (\ref{hess-A70}), \eqref{gj-P10}, 
and (\ref{gj-S60}) we see that
\begin{equation}
\label{gs7}
\begin{aligned}
F^{ii} \nabla_{j} u \nabla_{iij} u
  \geq \,& F^{ii} \nabla_{j} u (\nabla_{j} U_{ii} - \nabla_{j} A^{ii})
            - C |\nabla u|^2 \sum F^{ii} \\
  \geq \,& (\psi_{p_{l}} - F^{ii} A^{ii}_{p_l}) \nabla_{j} u \nabla_{jl} u
           - C |\nabla u|^2 \sum F^{ii}- C |\nabla u|^2.
\end{aligned}
\end{equation}
It follows that
\begin{equation}
\label{gj-S100}
\begin{aligned}
\mathcal{L} \phi \geq b \mathcal{L} \eta + \frac{\delta}{2} F^{ii} U_{ii}^2
   - C \sum F^{ii} - C.
\end{aligned}
\end{equation}

We now go on to prove \eqref{hess-a10} in Theorem~\ref{gj-th2}.
Let $\eta = \ul u - u$ so by \eqref{gs3},
\begin{equation}
\label{gs3.5}
\begin{aligned}
(\nabla_{i} \phi)^2
  \leq C \delta^2 (1 + U_{ii}^2) + 2 b^2 (\nabla_{i} \eta)^2
  \leq C \delta^2 U_{ii}^2 + C b^2.
\end{aligned}
\end{equation}
Next we estimate $E$. For fixed $0 < s \leq 1/3$ let
\[  \begin{aligned}
J \,& = \{i: U_{ii} \leq - s U_{11}\}, \;\;
K = \{i:  U_{ii} > - s U_{11} \}.
  \end{aligned} \]
We have
\begin{equation}
\label{gj-S130}
\begin{aligned}
 - F^{ij, kl} \nabla_1 U_{ij} \nabla_1 U_{kl}
\geq \,& \sum_{i \neq j} \frac{F^{ii} - F^{jj}}{U_{jj} - U_{ii}}
           (\nabla_1 U_{ij})^2 \\
 \geq \,& 2 \sum_{i \geq 2} \frac{F^{ii} - F^{11}}{U_{11} - U_{ii}}
            (\nabla_1 U_{i1})^2 \\
 \geq \,& \frac{2}{(1+s) U_{11}} \sum_{i \in K} (F^{ii} - F^{11})
            (\nabla_1 U_{i1})^2 \\
\geq \,& \frac{2 (1-s)}{(1+s) U_{11}} \sum_{i \in K} (F^{ii} - F^{11})
            ((\nabla_i U_{11})^2 - C U_{11}^2/s).
\end{aligned}
\end{equation}
The first inequality in \eqref{gj-S130} is a consequence of an
inequality due to Andrews~\cite{Andrews94}
and Gerhardt~\cite{Gerhardt96}; it was also included in an original version of \cite{CNS3}.
By \eqref{gj-S130}, \eqref{gj-S20} and \eqref{gs3.5} we obtain
\begin{equation}
\label{gj-S140}
\begin{aligned}
 E \leq \,& \frac{1}{U_{11}^2} \sum_{i \in J} F^{ii} (\nabla_i U_{11})^2
             + C \sum_{i \in K} F^{ii}
             + \frac{C F^{11}}{U_{11}^2}  \sum_{i \in K} (\nabla_i U_{11})^2  \\
   \leq \,& \sum_{i \in J} F^{ii} (\nabla_i \phi)^2
             +  C \sum F^{ii} + C F^{11} \sum (\nabla_i \phi)^2 \\
   \leq \,& C b^2 \sum_{i \in J} F^{ii}  + C \delta^2 \sum F^{ii} U_{ii}^2
            +  C \sum F^{ii} + C (\delta^2 U_{11}^2 + b^2) F^{11}.
\end{aligned}
\end{equation}

It follows from \eqref{gj-S90}, \eqref{gj-S100} and \eqref{gj-S140}
that
\begin{equation}
\label{gj-S150}
\begin{aligned}
b \mathcal{L} \eta
   \leq \,& \Big(C \delta^2 - \frac{\delta}{2} + \frac{C}{U_{11}}\Big) F^{ii} U_{ii}^2
            + C b^2 \sum_{i \in J} F^{ii} + C \sum F^{ii} \\
          & + C (\delta^2 U_{11}^2 + b^2) F^{11} + C.
\end{aligned}
\end{equation}
By Proposition~\ref{gj-lem3}
there exist uniform positive constants $\theta$, $R$ satisfying
\[\mathcal{L} (\ul u - u) \geq \theta \sum F^{ii} + \theta\]
provided that $U_{11} (x_0) > R$ and hence, when $b$ is sufficiently large,
\[\Big(C \delta^2 - \frac{\delta}{2} + \frac{C}{U_{11}}\Big) F^{ii} U_{ii}^2
            + C b^2 \sum_{i \in J} F^{ii} + C (\delta^2 U_{11}^2 + b^2) F^{11}
\geq 0. \]
This implies a bound $U_{11} (x_0) \leq C$ as otherwise the
first term would be negative for $\delta$ chosen sufficiently small, and
$|U_{ii}| \geq s U_{11}$ for $i \in J$.
 The proof of \eqref{hess-a10} in Theorem~\ref{gj-th2} is therefore
 complete.


We now turn to the interior estimate \eqref{gj-G185}
in Theorem~\ref{gj-th4}.
Following \cite{GW03a} we choose a cutoff function
$\zeta\in C_{0}^{\infty}(B_{r})$
such that
\begin{equation}
\label{2-22}
0\leq\zeta\leq 1, ~~\zeta|_{B_{\frac{r}{2}}}\equiv 1,
~~|\nabla \zeta| \leq \frac{C_r}{\sqrt{\zeta}},
~~|\nabla^{2} \zeta| \leq C_r
\end{equation}
where $C_r$ is a constant depending on $r$.

Let $\eta = \ul u - u + \log \zeta$.
By \eqref{gj-S20} and \eqref{gs3.5},
\begin{equation}
\label{gj-S105}
\begin{aligned}
E \leq F^{ii} (\nabla_i \phi)^2
   \leq \,& C \delta^2 F^{ii} U_{ii}^2 +
   \frac{C b^2}{\zeta^2} F^{ii} (\nabla_i \zeta)^2 + C b^2 \sum F^{ii} \\
  \leq \,& C \delta^2 F^{ii} U_{ii}^2 + \frac{C b^2}{\zeta} \sum F^{ii}.
\end{aligned}
\end{equation}

Under the MTW condition~\eqref{A3} we have
\[ A^{ii}_{p_1 p_1} \leq - c_0 < 0. \]
It therefore follows from \eqref{gj-S90},  \eqref{gj-S100} and
\eqref{gj-S105} that
\begin{equation}
\label{gj-S110}
\begin{aligned}
 b \mathcal{L} \eta
   \leq \,& \Big(\frac{C}{U_{11}} + C \delta^2- \frac{\delta}{2}\Big) F^{ii} U_{ii}^2
    - c_0 U_{11}  \sum_{i \geq 2} F^{ii} \\
    & + C b^2 \Big(\frac{1}{\zeta} + \frac{1}{U_{11}}\Big) \sum F^{ii} + C.
\end{aligned}
\end{equation}

From the assumption $\ul u \in C^2 (\ol B_r)$ is admissible so
for $B > 0$ sufficiently large,
\[ \lambda (B g + \ul U) \in \Gamma_n \;\; \mbox{in $\ol B_r$} \]
and therefore,
\[ F (2 B g + \ul U) \geq F (B g)   \;\; \mbox{in $\ol B_r$}. \]
By the concavity of $F$,
\begin{equation}
\label{g12}
\begin{aligned}
F^{ii} (\ul{U}_{ii}- U_{ii}) 
 & \geq F (2 B g + \ul U) - F (U) - 2 B \sum F^{ii}\\
& \geq F (Bg) - 2 B \sum F^{ii} - \psi (x, u, \nabla u).
\end{aligned}
\end{equation}

For $B > 0$ sufficiently large we have
\begin{equation}
\label{gj-S120}
\begin{aligned}
 \mathcal{L} \eta
  \geq \,& \mathcal{L} (\ul u - u) - \frac{C}{\zeta} \sum F^{ii} \\
  \geq \,& F^{ii} (\ul U_{ii} - U_{ii}) - \frac{C}{\zeta} \sum F^{ii} - C \\
  \geq  \,& F (B g) - \Big(2 B + \frac{C}{\zeta}\Big) \sum F^{ii} - C.
\end{aligned}
\end{equation}
From  \eqref{gj-S110}, \eqref{gj-S120} we derive a bound
$\zeta (x_0) U_{11} (x_0) \leq C$ which yields
$W (x_0) \leq C$ when we fix $\delta$ small and $B$ large.
  This proves
\[ |\nabla^2 u| \leq \frac{C}{\zeta} \;\; \mbox{in $B_r$}. \]
The proof of \eqref{gj-G185} in Theorem~\ref{gj-th4} is complete.

\bigskip

\section{Boundary estimates for second derivatives}
\label{gj-B}
\setcounter{equation}{0}

\medskip

In this section we establish the boundary estimate \eqref{hess-a10b}
in Theorem~\ref{gj-th2}.
We shall assume that the function $\varphi \in C^4 (\partial M)$ is
extended to a $C^4$ function on $\bM$, still denoted $\varphi$.

For a point $x_0$ on $\partial M$, we shall choose
smooth orthonormal local frames $e_1, \ldots, e_n$ around $x_0$ such that
when restricted to $\partial M$, $e_n$ is normal to $\partial M$.
For $x \in \bM$ let $\rho (x)$ and $d (x)$ denote the distances
from $x$ to $x_0$ and $\partial M$, respectively,
\[ \rho (x) \equiv \mbox{dist}_{M^n} (x, x_0), \;\;
      d (x) \equiv \mbox{dist}_{M^n} (x, \partial M) \]
and $M_{\delta} = \{x \in M : \rho (x) < \delta \}$.

Since $u - \ul{u} = 0$ on $\partial M$ we have
\begin{equation}
\label{hess-a200}
\nabla_{\alpha \beta} (u - \ul{u})
 = -  \nabla_n (u - \ul{u}) \varPi (e_{\alpha}, e_{\beta}), \;\;
\forall \; 1 \leq \alpha, \beta < n \;\;
\mbox{on  $\partial M$}
\end{equation}
where 
$\varPi$ denotes the second fundamental form of $\partial M$.
Therefore,
\begin{equation}
\label{hess-E130}
|\nabla_{\alpha \beta} u| \leq  C,  \;\; \forall \; 1 \leq \alpha, \beta < n
\;\;\mbox{on} \;\; \partial M.
\end{equation}


To proceed we calculate using \eqref{gj-S60} and \eqref{hess-A70}
for each $1 \leq k \leq n$,
\begin{equation}
\label{hess-E170}
\begin{aligned}
|\mathcal{L} \nabla_k (u - \varphi)|
\leq \,& C \Big(1 + \sum f_i |\lambda_i| + \sum f_i\Big),
\end{aligned}
\end{equation}
and
\begin{equation}
\begin{aligned}
\mathcal{L} |\nabla_k (u - \varphi)|^2
\geq \,& F^{ij} U_{i \beta} U_{j \beta}
         - C \Big(1 + \sum f_i |\lambda_i| + \sum f_i\Big).
\end{aligned}
\end{equation}

As in \cite{Guan12a} we need the following crucial lemma.

\begin{lemma}
\label{ma-lemma-E10}
There exist some uniform positive constants
$t, \delta, \varepsilon$ sufficiently small and $N$ sufficiently large
such that the function
\begin{equation}
\label{ma-E85}
v = (u - \ul{u}) + t d - \frac{N d^2}{2}
\end{equation}
satisfies $v \geq 0$ on $\bar{M_{\delta}}$ and
\begin{equation}
\label{ma-E86}
\mathcal{L} v 
\leq - \varepsilon \Big(1 + \sum F^{ii}\Big)
   \;\; \mbox{in $M_{\delta}$}.
\end{equation}
\end{lemma}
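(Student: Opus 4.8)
The barrier function $v=(u-\underline u)+td-\tfrac12Nd^2$ is designed so that $td-\tfrac12Nd^2\geq0$ precisely when $d\leq 2t/N$; choosing $\delta$ small (and $N$ large relative to $t$) and using $u\geq\underline u$ gives $v\geq0$ on $\overline{M_\delta}$, so the nonnegativity claim is the easy part. The real content is the differential inequality \eqref{ma-E86}. The strategy is to estimate $\mathcal{L}v$ term by term, splitting into a ``good'' region where $\lambda(U)$ is large and a ``bad'' region where it is bounded. First I would compute $\mathcal{L}(td-\tfrac12Nd^2)=tF^{ij}\nabla_{ij}d-NF^{ij}\nabla_i d\nabla_j d-Nd\,F^{ij}\nabla_{ij}d+(\text{first order terms})$. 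Since $d$ is smooth near $\partial M$ with $|\nabla d|=1$, the dominant term is $-NF^{ij}\nabla_i d\,\nabla_j d$; this is bounded above by $-N\sum F^{ii}\cdot(\text{eigenvalue lower bound})$, but more usefully one keeps it as a negative multiple of $F^{nn}$ (taking $e_n$ roughly normal) or, after summing, a negative multiple of $\sum F^{ii}$ up to the bounded error $tC\sum F^{ii}$ from $\nabla_{ij}d$. So for $N$ large the $d$-part contributes at most $-\tfrac12N\sum F^{ii}$ plus lower-order terms.

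**Handling the $\mathcal{L}(u-\underline u)$ term.** For this I would invoke Proposition~\ref{gj-lem3}: whenever $|\lambda(U)|\geq R$ we have $\mathcal{L}(\underline u-u)\geq\theta\sum F^{ii}+\theta$, i.e. $\mathcal{L}(u-\underline u)\leq-\theta\sum F^{ii}-\theta$, which is already of the desired sign and size; combined with the $d$-terms this gives $\mathcal{L}v\leq-\varepsilon(1+\sum F^{ii})$ immediately on the set $\{|\lambda(U)|\geq R\}$. On the complementary set $\{|\lambda(U)|\leq R\}$, $\lambda(U)$ lies in a fixed compact subset of $\Gamma$, so $F^{ij}$ and $\psi$-data are all controlled, and in particular $\sum F^{ii}$ is bounded both above and below by positive constants; here one uses the $-N F^{ij}\nabla_i d\,\nabla_j d$ term, which is negative of definite size $-c N$, to dominate all the bounded contributions from $\mathcal{L}(u-\underline u)$ and the lower-order terms, again yielding \eqref{ma-E86} after choosing $N$ large and then $\varepsilon$ small. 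The concavity inequality $F^{ij}(\underline U_{ij}-U_{ij})\geq F(\underline U)-F(U)=F(\underline U)-\psi(x,u,\nabla u)$ together with \eqref{A2}, \eqref{A4} to convert $A^{ij}_{p_k},\psi_{p_k}$ terms (exactly as in the proof of Proposition~\ref{gj-lem3}) is what makes the bounded region manageable.

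**Main obstacle.** The delicate point is the interplay of the constants: $N$ must be large enough to make both the gradient-squared term $-NF^{ij}\nabla_i d\,\nabla_j d$ beat the bounded errors \emph{and} to keep $v\geq0$, but $N$ also multiplies the term $-Nd\,F^{ij}\nabla_{ij}d$ which is only sign-indefinite; this is controlled by shrinking $\delta$ so that $Nd\leq N\delta$ is still small, i.e. one chooses $t$, then $N$, then $\delta=\delta(t,N)$, then $\varepsilon$, in that order. A secondary subtlety is that on the bad region one needs a genuine positive lower bound on $F^{ij}\nabla_i d\,\nabla_j d$; since $\{F^{ij}\}$ is positive definite with eigenvalues $f_i>0$ and $|\nabla d|=1$, this follows from the uniform positive lower bound on $\min_i f_i(\lambda)$ over the compact set $\{|\lambda|\leq R\}\cap\Gamma^{[\psi_{\min},\psi_{\max}]}$ — so one should note that $\lambda(U)$ stays away from $\partial\Gamma$ because $f(\lambda(U))=\psi>0$ is bounded below, which is what keeps all the $f_i$ bounded below on the bad region. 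Everything else is routine term-by-term bookkeeping using \eqref{gj-P10}, \eqref{hess-A70}, the smoothness of $d$, and the bounds $|u|_{C^1}$, $|\underline u|_{C^2}$.
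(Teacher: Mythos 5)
Your proposal is correct and takes essentially the same approach as the paper intends: the paper omits the proof, referring to Lemma 4.1 of \cite{Guan12a}, whose argument is exactly the two-region split you describe (invoke Proposition~\ref{gj-lem3} when $|\lambda(U)|\geq R$, and use $-NF^{ij}\nabla_i d\,\nabla_j d\leq -Nc_0$ when $|\lambda(U)|\leq R$ and hence $\lambda(U)$ lies in a compact subset of $\Gamma$ by $f(\lambda(U))=\psi\geq \min\psi>0$), with $t$, $N$, $\delta\leq 2t/N$, $\varepsilon$ chosen in that order. One minor imprecision: the concavity inequality and conditions \eqref{A2}, \eqref{A4} are what go into Proposition~\ref{gj-lem3} and hence the large-$|\lambda|$ region; in the bounded region one needs only that $F^{ij}$, $U_{ij}$, and the first-order coefficients are uniformly controlled, which is more elementary.
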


The proof of Lemma~\ref{ma-lemma-E10} is similar to that of Lemma 4.1
in \cite{Guan12a} using Proposition~\ref{gj-lem3},
so we omit it here.

Let
\begin{equation}
\label{hess-E176}
 \varPsi
   = A_1 v + A_2 \rho^2 - A_3 \sum_{\beta < n} |\nabla_{\beta} (u - \varphi)|^2.
\end{equation}
For fixed $ 1 \leq \alpha < n$, we derive using
Lemma~\ref{ma-lemma-E10} and Proposition~\ref{g-prop1} as in \cite{Guan12a}
\begin{equation}
\label{hess-E170'}
  \left\{ \begin{aligned}
& \mathcal{L} (\varPsi \pm \nabla_{\alpha} (u - \varphi)) \leq 0 \;\;
 \mbox{in $M_{\delta}$}, \\
 & \varPsi \pm \nabla_{\alpha} (u - \varphi) \geq 0
\;\; \mbox{ on $\partial M_{\delta}$}
\end{aligned} \right.
\end{equation}
when $A_1 \gg A_2 \gg A_3 \gg 1$. By the maximum principle we derive
$\varPsi \pm \nabla_{\alpha} (u - \varphi) \geq 0$
in $M_{\delta}$ and therefore
\begin{equation}
\label{hess-E130'}
|\nabla_{n\alpha} u (x_0)| \leq \nabla_n \varPsi (x_0) \leq C,
\;\; \forall \; \alpha < n.
\end{equation}

The rest of this section is devoted to derive
\begin{equation}
\label{cma-200}
 \nabla_{n n} u (x_0) \leq C.
\end{equation}
The idea is similar to that used in \cite{Guan12a} but the proof is much
more complicated due to the dependence of $\psi$ on $u$ and $\nabla u$.
So we shall carry out the proof in detail.

As in \cite{Guan12a}, following an idea of Trudinger~\cite{Trudinger95}
we prove that there are uniform constants $c_0, R_0$
such that for all $R > R_0$,
$(\lambda' [\{U_{\alpha \beta} (x_0)\}], R) \in \Gamma$ and
\begin{equation}
\label{cma-201}
f (\lambda' [\{U_{\alpha \beta} (x_0)\}], R) \geq \psi [u] (x_0) + c_0
\end{equation}
which implies \eqref{cma-200} by Lemma 1.2 in \cite{CNS3},
where
$\lambda' [\{U_{\alpha \beta}\}] = (\lambda'_1, \cdots, \lambda'_{n-1})$
denotes the eigenvalues of the $(n-1) \times (n-1)$ matrix
$\{U_{\alpha \beta}\}$ ($1 \leq \alpha, \beta \leq n-1$),
and $\psi [u] = \psi (\cdot, u, \nabla u)$.

Let 
\[ \tilde{m} \equiv \min_{x_0 \in \partial M} 
    (\lim_{R \rightarrow + \infty} f (\lambda' [\{U_{\alpha \beta} (x_0)\}], R)
       - \psi (x_0)), \]
\[  \tilde{c} \equiv \min_{x_0 \in \partial M} 
     (\lim_{R \rightarrow + \infty} 
         f (\lambda' [\{\ul U_{\alpha \beta} (x_0) \}], R)
        - F (\ul U_{ij} (x_0))) > 0. \]
We wish to show $\tilde{m} > 0$. Without loss of generality
we assume $\tilde{m} < \tilde{c}/2$ (otherwise we are done) and
suppose $\tilde{m}$ is achieved at a point $x_0 \in \partial M$.
Choose local orthonormal frames around $x_0$ as before and assume
$\nabla_{n n} u (x_0) \geq \nabla_{n n} \ul u (x_0)$.

For a symmetric $(n-1)^2$ matrix $\{r_{\alpha  {\beta}}\}$ such that $(\lambda' [\{r_{\alpha \beta}\}], R) \in \Gamma$ when $R$ is 
sufficientluy large, define
\[ \tF [r_{\alpha \beta}]
   \equiv \lim_{R \rightarrow + \infty} f (\lambda' [\{r_{\alpha \beta}\}], R) \]
Note that  $\tF$ is concave by \eqref{3I-30}.
There exists a positive semidefinite matrix $\{\tF^{\alpha {\beta}}_0\}$
such that 

\begin{equation}
\label{c-200}
\tF^{\alpha {\beta}}_0 (r_{\alpha  {\beta}} - U_{\alpha {\beta}} (x_0))
    \geq \tF [r_{\alpha  {\beta}}] - \tF [U_{\alpha  {\beta}} (x_0)]
 \end{equation}
for any symmetric matrix $\{r_{\alpha  \beta}\}$ with
$(\lambda' [\{r_{\alpha  \beta}\}], R) \in \Gamma$ when $R$ is 
sufficiently large.
In particular,
\begin{equation}
\label{c-210}
\tF^{\alpha {\beta}}_0 U_{\alpha  {\beta}} - \psi [u]
- \tF^{\alpha {\beta}}_0 U_{\alpha  {\beta}} (x_0) + \psi [u] (x_0)
\geq \tF [U_{\alpha  {\beta}}] - \psi [u] - \tilde{m} \geq 0 \;\;
\mbox{on $\partial M$}.
\end{equation}

By \eqref{hess-a200} we have on $\partial M$,
\begin{equation}
\label{c-220}
 U_{\alpha {\beta}} = \ul{U}_{\alpha {\beta}}
    - \nabla_n (u - \ul{u}) \sigma_{\alpha {\beta}}
+ A^{\alpha \beta}[u] - A^{\alpha \beta}[\ul u]
\end{equation}
where
$\sigma_{\alpha {\beta}} = \langle \nabla_{\alpha} e_{\beta}, e_n \rangle$;
note that
$\sigma_{\alpha \beta} = \varPi (e_\alpha, e_\beta)$ on
$\partial M$. 
It follows that at $x_0$,
\begin{equation}
\label{c-225}
\begin{aligned}
 \nabla_n (u - \ul{u}) \tF^{\alpha {\beta}}_0 \sigma_{\alpha {\beta}}
   = \,& \tF^{\alpha {\beta}}_0 (\ul{U}_{\alpha \beta} - U_{\alpha  {\beta}})
          + \tF^{\alpha {\beta}}_0 (A^{\alpha \beta}[u] - A^{\alpha \beta}[\ul u]) \\
\geq \,& \tF[\ul{U}_{\alpha {\beta}}] - \tF[U_{\alpha {\beta}}]
          + \tF^{\alpha {\beta}}_0 (A^{\alpha \beta}[u] - A^{\alpha \beta}[\ul u]) \\
  =  \,& \tF[\ul{U}_{\alpha {\beta}}] - \psi [u] - \tilde{m}
          + \tF^{\alpha {\beta}}_0 (A^{\alpha \beta}[u] - A^{\alpha \beta}[\ul u]) \\
 \geq \,& \tilde{c} - \tilde{m} + \psi [\ul u] - \psi [u]
         + \tF^{\alpha {\beta}}_0 (A^{\alpha \beta}[u] - A^{\alpha \beta}[\ul u]) \\
 \geq \,& \frac{\tilde{c}}{2} + H [u] - H[\ul u]
\end{aligned}
\end{equation}
where $H [u] = \tF^{\alpha {\beta}}_0 A^{\alpha \beta} [u] - \psi [u]$.

Define
\[ \varPhi = - \eta \nabla_n (u - \ul u) + H [u] + Q \]
where $\eta = \tF^{\alpha {\beta}}_0 \sigma_{\alpha {\beta}}$ and
\[ Q \equiv \tF^{\alpha {\beta}}_0 \nabla_{\alpha {\beta}} \ul u
          - \tF^{\alpha {\beta}}_0 U_{\alpha {\beta}} (x_0) + \psi [u] (x_0). \]
From \eqref{c-210} and \eqref{c-220}
we see that $\varPhi (x_0) = 0$ and
$\varPhi \geq 0$ on $\partial M$ near $x_0$.

By \eqref{hess-E170} and assumption \eqref{A2} we have
\[  \begin{aligned}
\mathcal{L} H \leq \,& H_z [u] \mathcal{L} u
   + H_{p_k} [u] \mathcal{L} \nabla_k u
   + F^{ij} H_{p_k p_l} [u] \nabla_{ki} u \nabla_{lj} u + C \sum F^{ii} + C \\
   \leq \, & C \sum f_i + C \sum f_i |\lambda_i| + C.
   \end{aligned} \]
Therefore,
\begin{equation}
\label{gblq-B360}
 \begin{aligned}
\mathcal{L} \varPhi
  \leq   C \sum f_i + C \sum f_i |\lambda_i| + C.
\end{aligned}
\end{equation}

Consider the function $\varPsi$ defined in \eqref{hess-E176}.
Applying Lemma~\ref{ma-lemma-E10} and Proposition~\ref{g-prop1}
again for $A_1 \gg A_2 \gg A_3 \gg 1$ we derive
\begin{equation}
\label{cma-106}
  \left\{ \begin{aligned}
  & \mathcal{L} (\varPsi + \varPhi) \leq  0 \;\; \mbox{in $M_{\delta}$},  \\
        & \varPsi + \varPhi \geq 0 \;\; \mbox{on $\partial M_{\delta}$}.
\end{aligned} \right.
\end{equation}
By the maximum principle,
$\varPsi +  \varPhi \geq 0$ in $M_{\delta}$. Thus
$\nabla_n \varPhi (x_0) \geq - \nabla_n \varPsi (x_0) \geq -C$.

Write $u^t = t u + (1-t) \ul u$ and
\[ H [u^t] = \tF^{\alpha {\beta}}_0 A^{\alpha \beta} [u^t] - \psi [u^t].  \]
We have
\[  \begin{aligned}
H [u] - H[\ul u]
     = \,& \int_0^1 \frac{d H[u^t]}{dt} dt \\
     = \,& (u - \ul{u})  \int _0^1 H_z [u^t] dt
       + \sum \nabla_k (u - \ul{u}) \int _0^1 H_{p_k} [u^t] dt.
       \end{aligned}  \]
Therefore, at $x_0$,
\begin{equation}
\label{c-235}
 H [u] - H[\ul u]
     =  \nabla_n (u - \ul{u}) \int _0^1 H_{p_n} [u^t] dt
\end{equation}
and
\begin{equation}
\label{c-245}
\begin{aligned}
 \nabla_n H [u]
 = \,& \nabla_n H [\ul u]
        +  \sum \nabla_{kn} (u - \ul{u}) \int _0^1 H_{p_k} [u^t] dt \\
   \,& + \nabla_n (u-\ul{u}) \int _0^1 (H_z [u^t]
        + \nabla_n' H_{p_n} [u^t] + H_{z p_n} [u^t] \nabla_n u^t) dt \\
   \,& + \nabla_n (u-\ul{u}) \sum \int _0^1 H_{p_n p_l} [u^t] \nabla_{ln} u^t dt \\
  \leq \,&  \nabla_{nn} (u - \ul{u})
          \int _0^1 (H_{p_n} [u^t] + t H_{p_n p_n} [u^t] \nabla_n (u - \ul{u})) dt
           + C \\
   \leq \,&  \nabla_{nn} (u - \ul{u}) \int _0^1 H_{p_n} [u^t] dt + C
       \end{aligned}
 \end{equation}
since $H_{p_n p_n} \leq 0$, $\nabla_{nn} (u - \ul{u}) \geq 0$
and $\nabla_{n} (u - \ul{u}) \geq 0$.
It follows that
\begin{equation}
\label{c-255}
\begin{aligned}
 \nabla_n \varPhi (x_0)
   \leq \,& - \eta (x_0) \nabla_{nn} (x_0) +  \nabla_n H [u] (x_0) + C \\
   \leq \,& \Big(- \eta (x_0) + \int _0^1 H_{p_n} [u^t] (x_0) dt\Big)
               \nabla_{nn} u (x_0) + C.
       \end{aligned}
 \end{equation}
 By \eqref{c-225} and \eqref{c-235},
\begin{equation}
\label{c-230}
\eta (x_0) -  \int _0^1 H_{p_n} [u^t] (x_0) dt
                \geq \frac{\tilde{c}}{2 \nabla_n (u - \ul{u}) (x_0)}
                \geq \epsilon_1 \tilde{c} > 0
\end{equation}
for some uniform $\epsilon_1 > 0$.
This gives
\begin{equation}
\label{cma-310}
\nabla_{nn} u (x_0) \leq  \frac{C}{\epsilon_1 \tilde{c}}.
\end{equation}

So we have an {\em a priori}
upper bound for all eigenvalues of $\{U_{ij} (x_0)\}$.
Consequently, $\lambda [\{U_{ij} (x_0)\}]$ is contained in a
compact subset of $\Gamma$ by \eqref{3I-40}, and therefore
\[ m_R \equiv f (\lambda [U_{\alpha  {\beta}} (x_0)], R) 
     - \psi [u] (x_0) > 0 \]
when $R$ is sufficiently large.
This proves \eqref{cma-201} and  the proof of \eqref{hess-a10b}
is complete.

\bigskip

\section{The gradient estimates}
\label{gj-G}
\setcounter{equation}{0}

\medskip

In this section we consider the gradient estimates.
Throughout the section,
and in Theorems~\ref{gj-th1}-\ref{gj-th1b} below in particular,
we assume \eqref{3I-20}-\eqref{3I-40}, \eqref{A2}
and the following growth conditions hold
\begin{equation}
\label{A1}
\left\{ \begin{aligned}
    p \cdot \nabla_x A^{\xi \xi} (x, z, p) + |p|^2  A^{\xi \xi}_z (x, z, p)
         \,& \leq \bar{\psi}_1 (x, z) |\xi|^2 (1 + |p|^{\gamma_1}), \\
    p \cdot \nabla_x \psi (x, z, p)  + |p|^2 \psi_z (x, z, p)
         \,& \geq - \bar{\psi}_2 (x, z) (1 + |p|^{\gamma_2}),
  \end{aligned} \right.
\end{equation}
for some functions $\bar{\psi}_1, \bar{\psi}_2 > 0$ and constants
$\gamma_1, \gamma_2  > 0$.
Let $u \in C^3 (\bM)$  be an admissible solution of ~\eqref{3I-10}.

\begin{theorem}
\label{gj-th1}
Assume, in addition, that
\eqref{A3} and \eqref{gj-I105} hold, $\gamma_1 < 4$, $\gamma_2 = 2$
in \eqref{A1},
and that there is an admissible function $\ul u \in C^2 (\bM)$.
 Then
 \begin{equation}
\label{3I-R60}
 \max_{\bM} |\nabla u|
     \leq C_1 \big(1 + \max_{\partial M} |\nabla u|\big)
\end{equation}
where $C_1$ depends on $|u|_{C^0 (\bM)}$ and
$|\ul u|_{C^2 (\bM)}$.
\end{theorem}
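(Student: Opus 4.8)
The plan is to establish the interior gradient bound \eqref{3I-R60} via a test function of the form $\log|\nabla u|^2 + \phi(|\nabla u|^2) + \varrho(u - \underline u)$, where the auxiliary pieces are chosen to exploit the subsolution and the structural hypotheses \eqref{A1}, \eqref{A3}, \eqref{gj-I105}. First I would suppose the maximum of the relevant quantity $w := |\nabla u|^2 e^{\phi}$ (with $\phi$ to be determined, depending on $u$ and $\underline u - u$) is attained at an interior point $x_0 \in M$; otherwise the boundary term on the right of \eqref{3I-R60} dominates and we are done. At $x_0$ choose an orthonormal frame diagonalizing $U = \nabla^2 u + A[u]$, and assume $|\nabla u|(x_0)$ is large (say $\geq 1$), since otherwise the estimate is trivial.

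The core computation is to apply the linear operator $\mathcal{L}$ from \eqref{gj-lin} to $\log w$ and use the first- and second-order conditions at $x_0$. Differentiating the equation \eqref{3I-10'} once in the direction of $\nabla u$ and feeding in \eqref{A1} (with $\gamma_2 = 2$ for the $\psi$ term and $\gamma_1 < 4$ for the $A$ term) produces terms controlled by $\sum f_i|\lambda_i|$, $\sum f_i$, and powers of $|\nabla u|$. The key gains come from three sources: (i) the concavity of $A^{\xi\xi}$ in $p$ from \eqref{A2}, which handles the bad second-derivative-of-$A$ terms in the right direction; (ii) the MTW condition \eqref{A3}, which gives a strictly negative contribution $-c_0 |\nabla u|^2 \sum_{i} f_i$ (or at least $\sum_{i\neq r}$) that absorbs the dangerous positive terms of order $|\nabla u|^2 \sum f_i$; and (iii) the subsolution term, where I would add $\varrho(u - \underline u)$ or build $\underline u - u$ into $\phi$ and invoke Proposition~\ref{gj-lem3}: when $|\lambda(U)| \geq R$ we get $\mathcal{L}(\underline u - u) \geq \theta \sum F^{ii} + \theta$, which supplies a definite positive multiple of $\sum f_i$ to beat the error terms. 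The assumption \eqref{gj-I105}, $\lim_{t\to\infty} f(t\mathbf{1}) = +\infty$, is used to guarantee that when $\lambda(U)$ is large in a controlled cone one cannot have $\sum f_i$ degenerate relative to $f(\lambda)$, closing the argument on the region $|\lambda(U)| \geq R$; on the complementary region $|\lambda(U)| \leq R$ the Hessian is already bounded and the equation together with the $C^0$ bound forces a gradient bound directly.

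Combining these, at $x_0$ one should arrive at an inequality of the schematic shape
\[
0 \,\geq\, \mathcal{L}(\log w) \,\geq\, c_0 |\nabla u|^2 \sum_{i} f_i \,-\, C\big(1 + |\nabla u|^{\gamma_1/2} + |\nabla u|\big)\sum f_i \,-\, C\sum f_i|\lambda_i| \,-\, C,
\]
and then Proposition~\ref{g-prop1} (inequality \eqref{C210}) is used to swallow $\sum f_i|\lambda_i|$ into $\epsilon\sum_{i\neq r}f_i\lambda_i^2 + C(1 + \epsilon^{-1}\sum f_i)$, with the $\sum_{i\neq r}f_i\lambda_i^2$ term in turn controlled via \eqref{eq5.170} and the quantity already appearing; since $\gamma_1 < 4$ means $\gamma_1/2 < 2$, the $c_0|\nabla u|^2\sum f_i$ term strictly dominates for $|\nabla u|(x_0)$ large, a contradiction unless $|\nabla u|(x_0) \leq C$. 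Tracking the frozen weight $e^{\phi}$ then gives \eqref{3I-R60} with $C_1$ depending only on $|u|_{C^0(\bM)}$ and $|\underline u|_{C^2(\bM)}$.

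\textbf{Main obstacle.} I expect the delicate point to be the bookkeeping of the terms of order $|\nabla u|^2\sum f_i$: the MTW condition \eqref{A3} only gives the negative term along directions $\xi \perp \eta$, so one must carefully split the sum over the frame indices, separate the single bad direction (the index $r$ of Proposition~\ref{g-prop1}), and verify that the positive contributions from $p\cdot\nabla_x A^{\xi\xi}$, the $\psi$-gradient terms, and the curvature of $M$ are genuinely beaten — not merely formally — by $c_0|\nabla u|^2\sum_{i\neq r}f_i$ plus the $\theta\sum f_i$ from Proposition~\ref{gj-lem3}. The interplay with the condition $\gamma_1 < 4$ (rather than $\gamma_1 \leq 2$) is what forces the use of $\log|\nabla u|^2$ rather than $|\nabla u|^2$ as the base test function, and getting the exponents to balance there is where the argument is tightest.
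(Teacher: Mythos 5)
Your proposal contains a genuine gap at the central step. You plan to invoke Proposition~\ref{gj-lem3} to obtain $\mathcal{L}(\ul u - u) \geq \theta \sum F^{ii} + \theta$ when $|\lambda(U)| \geq R$, but that proposition rests on Theorem~\ref{3I-th3} and hence requires both the structural hypothesis \eqref{gj-I100} and that $\ul u$ be a subsolution in the sense of \eqref{3I-11s}. Neither is assumed in Theorem~\ref{gj-th1}: the hypothesis here is only that $\ul u \in C^2(\bM)$ is \emph{admissible}, and \eqref{gj-I100} is not in the list of standing assumptions for this section (which are \eqref{3I-20}--\eqref{3I-40}, \eqref{A2}, \eqref{A1}). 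So the key positive term $\theta\sum F^{ii} + \theta$ you rely on is simply not available, and your case split on $|\lambda(U)| \lessgtr R$ (which the paper does not perform) does not rescue the argument: on the region $|\lambda(U)|\leq R$ a bounded Hessian at the single max point does not by itself yield a gradient bound.

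The paper's mechanism for the subsolution-like gain is weaker but sufficient, and is exactly where \eqref{gj-I105} enters. Using only admissibility of $\ul u$ and concavity of $F$, one writes
\begin{equation*}
F^{ii}(\ul{U}_{ii}- U_{ii}) \geq F(2Bg + \ul U) - F(U) - 2B\sum F^{ii} \geq F(Bg) - 2B\sum F^{ii} - \psi(x,u,\nabla u),
\end{equation*}
valid for $B$ large enough that $\lambda(Bg + \ul U) \in \Gamma_n$ on $\bM$; this is inequality \eqref{g12}. The role of \eqref{gj-I105} is then to make $F(Bg) = f(B\mathbf{1})$ arbitrarily large by taking $B$ large, which is what absorbs the leftover constant term $-C\phi|\nabla u|^{\gamma_2 - 2} = -C\phi$ (here $\gamma_2 = 2$ is essential), while the $-2B\sum F^{ii}$ and the other $\sum F^{ii}$-terms of order $|\nabla u|^{\gamma_1 - 2}$ (with $\gamma_1 - 2 < 2$) are dominated by $c_0'\phi|\nabla\phi|^2\sum F^{ii}$ coming from the MTW condition \eqref{A3} together with the first- and second-order conditions at the max of $\log|\nabla u| - a\log\phi$, $\phi = (u-\ul u) + b$. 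Your reading of \eqref{gj-I105} as preventing degeneracy of $\sum f_i$ relative to $f(\lambda)$ is not how it is used, and Proposition~\ref{g-prop1} (in particular \eqref{C210}, which needs \eqref{3I-45}) plays no role in this proof. To repair your argument, replace the appeal to Proposition~\ref{gj-lem3} by the concavity/admissibility estimate \eqref{g12} and let \eqref{gj-I105} do the work of generating the large positive constant.
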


\begin{proof}
Let $w = |\nabla u|$ and $\phi$ a positive function to be determined.
Suppose the function $w \phi^{-a}$
achieves a positive maximum at an interior point $x_0 \in M$
where $a < 1$ is constant.
Choose a smooth orthonormal local frame $e_1, \ldots, e_n$
about $x_0$ such that $\nabla_{e_i} e_j = 0$ at $x_0$
and $\{U_{ij} (x_0)\}$ is diagonal.


The function $\log w - a \log \phi$ attains its maximum
at $x_0$ where for $i = 1, \ldots, n$,
\begin{equation}
\label{g1} 
\frac{\nabla_i w}{w} - \frac{a \nabla_i \phi}{\phi} = 0,
\end{equation}
\begin{equation}
\label{g2}
\frac{\nabla_{ii} w}{w} + \frac{(a - a^2) |\nabla_{i} \phi|^2}{\phi^2}
   - \frac{a \nabla_{ii} \phi}{\phi} \leq 0.
\end{equation}
Next,
\[ w \nabla_i w = \nabla_{l} u \nabla_{il} u \]
and, by (\ref{hess-A70}) and (\ref{g1}),
\begin{equation}
\label{g3}
\begin{aligned}
w \nabla_{ii} w
    = \,& \nabla_{l} u \nabla_{iil} u + \nabla_{il} u \nabla_{il} u
          - \nabla_i w \nabla_i w \\
    = \,& (\nabla_{lii}u+ R^{k}_{iil} \nabla_{k} u) \nabla_{l} u
          + \Big(\delta_{kl} - \frac{\nabla_{k} u \nabla_{l} u}{w^2} \Big)
            \nabla_{ik} u \nabla_{il} u \\
 \geq \,& (\nabla_l U_{ii} - A^{ii}_{p_k} \nabla_{lk} u - A^{ii}_{u} \nabla_l u - \nabla'_l A^{ii}) \nabla_l u
          - C |\nabla u|^2 \\
    = \,& \nabla_{l} u \nabla_l U_{ii} - \frac{w^2}{\phi} (a A^{ii}_{p_{k}} \nabla_{k} \phi
          + \phi A^{ii}_{u}) - \nabla_l u \nabla'_l A^{ii} - C w^2.\\
\end{aligned}
\end{equation}
By \eqref{gj-S60} and (\ref{g1}),
\begin{equation}
\label{g11'}
\begin{aligned}
 F^{ii} \nabla_{l} u \nabla_l U_{ii}
    = \,& \nabla_{l} u \nabla'_{l} \psi + \psi_u |\nabla u|^2
          + \psi_{p_k} \nabla_{l} u \nabla_{lk} u \\
    = \,& \nabla_{l} u \nabla'_{l} \psi + \psi_u |\nabla u|^2
          + \frac{a w^2}{\phi} \psi_{p_k} \nabla_k \phi.
        \end{aligned}
\end{equation}

Let
$\phi = (u - \ul u) + b > 0$ where $b = 1 + \sup_{M} (\ul u - u)$.
By the MTW condition \eqref{A3} we have
\begin{equation}
\label{g3.5}
\begin{aligned}
  - A^{ii}_{p_{k}} \nabla_{k} \phi
      = \,&  A^{ii}_{p_{k}} (x, u, \nabla u) \nabla_{k} (\ul u - u) \\
   \geq \,& A^{ii} (x, u, \nabla \ul{u}) - A^{ii} (x, u, \nabla u)
             + c_0 (|\nabla \phi|^2 - |\nabla_i \phi|^2) \\
   \geq \,& A^{ii} (x, \ul u, \nabla \ul{u}) - A^{ii} (x, u, \nabla u)
             + c_0 (|\nabla \phi|^2 - |\nabla_i \phi|^2) - C.
\end{aligned}
\end{equation}
By (\ref{g2}), \eqref{g3} and (\ref{g3.5}),
\begin{equation}
\label{g10}
\begin{aligned}
0 \geq \,& \frac{\nabla_{l} u}{w^2} F^{ii} \nabla_l U_{ii}
           + \frac{a}{\phi} F^{ii} (\ul{U}_{ii}- U_{ii})
           + \frac{a c_0 |\nabla \phi|^2}{\phi} \sum F^{ii} \\
       \,& + \frac{a - a^2 - c_0 a \phi}{\phi^2} F^{ii} |\nabla_{i} \phi|^2
           - F^{ii} A^{ii}_u - \frac{\nabla_{l} u}{w^2} F^{ii}\nabla'_l A^{ii}
           - C \sum F^{ii}.
\end{aligned}
\end{equation}

Note that for $a \in (0, 1)$,
\begin{equation}
\label{g13}
 \frac{a c_0 |\nabla \phi|^2}{\phi} \sum F^{ii}
      + \frac{a - a^2 - c_0 a \phi}{\phi^2} F^{ii} |\nabla_{i} \phi|^2
\geq  c_0' |\nabla \phi|^2 \sum F^{ii}
\end{equation}
for some $c_0' > 0$.
 This is obvious if $c_0 \phi < 1$; if $c_0 \phi \geq 1$ then
\[  \frac{a c_0 |\nabla \phi|^2}{\phi} \sum F^{ii}
    + \frac{a - a^2 - c_0 a \phi}{\phi^2} F^{ii} |\nabla_{i} \phi|^2
\geq  \frac{a -a^2}{\phi^2}  |\nabla \phi|^2 \sum F^{ii}. \]
By (\ref{g11'}) and the convexity of $\psi (x, z, p)$
in $p$,
\begin{equation}
\label{g11}
\begin{aligned}
 F^{ii} \nabla_{l} u \nabla_l U_{ii}
    \geq \,& \nabla_{l} u \nabla'_{l} \psi + \psi_u |\nabla u|^2
      + \frac{a w^2}{\phi}  (\psi (x, u, \nabla u) - \psi (x, u, \nabla \ul u)).
        \end{aligned}
\end{equation}

Plugging \eqref{g13}, \eqref{g11} and \eqref{g12}
into \eqref{g10},
we derive for $B$ sufficiently large
\begin{equation}
\label{g14}
\begin{aligned}
 0 \geq \,& \frac{\nabla_{l} u \nabla'_{l} \psi}{w^2} + \psi_u
                   + \frac{a}{\phi} \Big(F(Bg)
                    - \psi (x, u, \nabla \ul u) - 2B \sum F^{ii}\Big)   \\
          & + c_0' |\nabla \phi|^2 \sum F^{ii} - F^{ii} A^{ii}_u
            - \frac{\nabla_{l} u}{w^2} F^{ii}\nabla'_l A^{ii} - C \sum F^{ii}.
\end{aligned}
\end{equation}
By \eqref{A1} we obtain
\begin{equation}
\label{g14'}
\begin{aligned}
 0 \geq \,& a F (Bg) - a \psi (x, u, \nabla \ul u)
       - C \phi |\nabla u|^{\gamma_2 - 2} \\
          & + (c_0' \phi |\nabla \phi|^2  - C \phi |\nabla u|^{\gamma_1 - 2}
              - C \phi - 2 a B) \sum F^{ii}.
\end{aligned}
\end{equation}
Since $\gamma_1 < 4$ and $\gamma_2 = 2$, by \eqref{gj-I105} this yields a bound $|\nabla u (x_0)| \leq C$ if
$B$ is chosen sufficiently large.
\end{proof}

\begin{theorem}
\label{gj-th1a}
Assume, in addition, that
{\bf (i)} $\psi = \psi (x, p)$, $A = A (x, p)$;
{\bf (ii)} $(M^n, g)$ has nonnegative sectional curvature; and
{\bf (iii)} \eqref{3I-11s}, \eqref{gj-I100} \eqref{A1} hold
for $\gamma_1, \gamma_2 < 2$ in \eqref{A1}, and
that there exist constants
$K > 0$ and $c_1 > 0$ such that
\begin{equation}
\label{A6}
\psi (x, p) \geq c_1, \;\; \forall \, x \in \bM, p \in T_x \bM; |p| \geq K.
\end{equation}
 Then \eqref{3I-R60} holds.
\end{theorem}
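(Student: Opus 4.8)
The plan is to run the argument of Theorem~\ref{gj-th1}, trading the two hypotheses that are no longer available --- the MTW condition \eqref{A3} and the growth condition \eqref{gj-I105} --- for the subsolution inequality \eqref{3I-11s} together with \eqref{gj-I100} via Proposition~\ref{gj-lem3}, and using the nonnegativity of the sectional curvature to absorb the curvature terms that the weaker growth $\gamma_1,\gamma_2<2$ can no longer handle. As before, I set $w=|\nabla u|$, $\phi=(u-\ul u)+b$ with $b=1+\sup_M(\ul u-u)$ (so $\phi\geq1$), suppose $w\phi^{-a}$ (for fixed $a\in(0,1)$) attains a positive interior maximum at $x_0$ --- if the maximum is attained on $\partial M$ then \eqref{3I-R60} is immediate --- and choose an orthonormal frame at $x_0$ with $\nabla_{e_i}e_j=0$ and $\{U_{ij}(x_0)\}$ diagonal, so that \eqref{g1} and \eqref{g2} hold.

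First I compute $w\nabla_{ii}w$ as in \eqref{g3}, commuting third derivatives by \eqref{hess-A70}. The only $O(w^2)$ term without an obviously favourable sign is the curvature term $R^m_{iil}\nabla_m u\,\nabla_l u$; under the curvature convention of this paper it equals $g(R(e_i,\nabla u)e_i,\nabla u)$, which is nonnegative precisely because $(M^n,g)$ has nonnegative sectional curvature, hence may be dropped from the lower bound. This yields $w\nabla_{ii}w\geq(\nabla_l U_{ii}-A^{ii}_{p_k}\nabla_{lk}u-\nabla'_l A^{ii})\nabla_l u$ with no $-Cw^2$ remainder --- this is exactly the point at which $\gamma_1,\gamma_2<2$ suffices. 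Differentiating \eqref{3I-10'} once as in \eqref{gj-S60} (the $\psi_u$ and $A^{ii}_u$ terms vanish since $\psi=\psi(x,p)$, $A=A(x,p)$), substituting \eqref{g1}, and using the concavity of $A^{\xi\xi}$ and convexity of $\psi$ in $p$ from \eqref{A2} to bound $-A^{ii}_{p_k}\nabla_k\phi$ and $\psi_{p_k}\nabla_k\phi$ below by $A^{ii}(x,\nabla\ul u)-A^{ii}(x,\nabla u)$ and $\psi(x,\nabla u)-\psi(x,\nabla\ul u)$ respectively, I arrive --- after the $A$-comparison terms cancel against those produced by $F^{ii}\nabla_{ii}\phi$ --- at an inequality of the form
\[ \frac{a}{\phi}F^{ii}(\ul U_{ii}-U_{ii})+\frac{a}{\phi}\big(\psi(x,\nabla u)-\psi(x,\nabla\ul u)\big)+\frac{\nabla_l u\,\nabla'_l\psi}{w^2}\leq\frac{C(1+w^{\gamma_1})}{w^2}\sum F^{ii}. \]

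Now the two new ingredients enter. If $w(x_0)$ exceeds a uniform constant then, by \eqref{g1} (which forces the eigenvalue of $U(x_0)$ in the dominant gradient direction to grow like $w(x_0)^2$) together with the lower bound $\psi\geq c_1$ from \eqref{A6} that keeps $F(U)=\psi$ in the admissible range, one has $|\lambda(U(x_0))|\geq R$; hence Proposition~\ref{gj-lem3} (equivalently \eqref{gj-01}) applies and gives $F^{ii}(\ul U_{ii}-U_{ii})\geq F(\ul U)-F(U)+\theta\sum F^{ii}+\theta$. Using $F(\ul U)\geq\psi(x,\nabla\ul u)$ (subsolution) and $F(U)=\psi(x,\nabla u)$, the $\psi$-comparison terms cancel and the displayed inequality collapses to
\[ \frac{a\theta}{\phi}\Big(\sum F^{ii}+1\Big)\leq\frac{C(1+w^{\gamma_1})}{w^2}\sum F^{ii}+\frac{C(1+w^{\gamma_2})}{w^2}, \]
where the last term bounds $-\nabla_l u\,\nabla'_l\psi/w^2$ by \eqref{A1}. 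Since $\phi$ is bounded above and $\gamma_1,\gamma_2<2$, both terms on the right tend to $0$ as $w\to\infty$, so this is impossible for $w(x_0)$ large; when instead $|\lambda(U(x_0))|<R$ the matrix $\{U_{ij}(x_0)\}$ is bounded and $w(x_0)\leq C$ is read directly from \eqref{g1}. Either way $w(x_0)\leq C$, and \eqref{3I-R60} follows.

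The step I expect to be the main obstacle is the interplay with Proposition~\ref{gj-lem3} in this setting: one must verify that a large value of $w$ at the interior maximum genuinely pushes $|\lambda(U)|$ past the threshold $R$ while keeping $F(U)=\psi$ in the interval required by Theorem~\ref{3I-th3}, with $\theta$ and $R$ uniform even though $\nabla u$ is not yet controlled --- this is exactly the purpose of the uniform positive lower bound \eqref{A6} for $\psi$ at large $|p|$. A secondary but genuine point is checking the sign of the sectional-curvature term under the paper's conventions; the rest is the same chain of substitutions as in the proof of Theorem~\ref{gj-th1}.
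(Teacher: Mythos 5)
Your overall strategy is sound in outline --- use nonnegative sectional curvature to drop the curvature term in $w\nabla_{ii}w$, keep only $\gamma_1,\gamma_2<2$ growth, and invoke Proposition~\ref{gj-lem3} through the subsolution $\ul u$ --- and your treatment of the case $|\lambda(U(x_0))|\geq R$ matches the paper's. However, the case reduction you rely on does not hold, and your handling of the complementary case is wrong, so the proof has a genuine gap.

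The implication ``$w(x_0)$ large $\Rightarrow|\lambda(U(x_0))|\geq R$'' that you assert via \eqref{g1} and \eqref{A6} is not justified. Relation \eqref{g1} only encodes $\nabla_lu\,\nabla_{il}u=\frac{aw^2\nabla_i\phi}{\phi}$, which controls $\nabla^2u$, not $U=\nabla^2u+A$; since \eqref{A2} gives concavity of $A^{\xi\xi}$ in $p$ (an upper bound only) and \eqref{A1} controls only $\nabla_x A$ and $A_z$, nothing prevents $A$ from being large and negative in a way that keeps $\lambda(U)$ bounded while $|\nabla u|$ grows. The role of \eqref{A6} in the paper is not to push $|\lambda(U)|$ past $R$ but to handle the \emph{other} case: when $|\lambda(U(x_0))|\leq R$ and $|\nabla u(x_0)|\geq K$, the lower bound $\psi\geq c_1$ places $\lambda(U)$ in a compact subset of $\Gamma$ bounded away from $\partial\Gamma$, giving uniform ellipticity $C_2^{-1}I\leq\{F^{ij}\}\leq C_2I$. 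Your alternative claim for that case --- ``$w(x_0)\leq C$ is read directly from \eqref{g1}'' --- does not hold: the relation \eqref{g1} gives a first-order identity at a critical point, not a bound, and again $A$ large spoils any attempt to deduce $w\leq C$ from $U$ bounded.

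There is a related deletion that matters: in your reduced inequality you dropped the term $\frac{(a-a^2)}{\phi^2}F^{ii}|\nabla_i\phi|^2$ (which comes from the second-order test \eqref{g2}, not from the MTW condition, and hence is still available). This positive term is precisely what makes the second case work. In the paper, when $|\lambda(U(x_0))|\leq R$, one uses $\mathcal{L}(\ul u-u)\geq0$ (which follows from concavity of $F$, concavity of $A^{\xi\xi}$ and convexity of $\psi$ in $p$ together with \eqref{3I-11s}, with $A,\psi$ independent of $u$) and the uniform ellipticity to reduce \eqref{g16} to $c_0C_2^{-1}|\nabla\phi|^2\leq nCC_2|\nabla u|^{\gamma_1-2}+C|\nabla u|^{\gamma_2-2}$, which yields $|\nabla u(x_0)|\leq C$ because $\gamma_1,\gamma_2<2$. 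To repair your argument, restore the $F^{ii}|\nabla_i\phi|^2$ term, keep the genuine two cases based on the size of $|\lambda(U(x_0))|$ (not of $w(x_0)$), and supply the uniform-ellipticity argument via \eqref{A6} in the bounded-$U$ case.
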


\begin{proof}
Since $(M, g)$ has nonnegative sectional curvature, in orthonormal local
frame,
\[ R^{k}_{iil} \nabla_{k} u \nabla_{l} u \geq 0. \]
In the proof of Theorem~\ref{gj-th1},  we therefore have in place of
\eqref{g3},
\begin{equation}
\label{g15}
\begin{aligned}
w \nabla_{ii} w
   \geq \,& \nabla_{lii} u \nabla_l u + R^{k}_{iil} \nabla_{k} u \nabla_{l} u \\
   \geq \,& \nabla_{l} u \nabla_l U_{ii}
           - \frac{w^2}{\phi} (a A^{ii}_{p_{k}} \nabla_{k} \phi
           + \phi A^{ii}_{u}) - \nabla_l u \nabla'_l A^{ii} .
\end{aligned}
\end{equation}
By \eqref{A1}, \eqref{g2}, \eqref{g15} and \eqref{g11}, we obtain
\begin{equation}
\label{g16}
\begin{aligned}
 0 \geq \,& - \frac{a}{\phi} \mathcal{L} \phi
                  + \frac{\nabla_{l} u \nabla'_{l} \psi}{w^2}
                  - \frac{1}{w^2} F^{ii} \nabla_l u \nabla'_l A^{ii}
               + \frac{(a - a^2)}{\phi^2} F^{ii} |\nabla_i \phi|^2\\
  \geq \,&  \frac{a}{\phi} \mathcal{L} (\ul u - u) + c_0F^{ii} |\nabla_i \phi|^2
              - C |\nabla u|^{\gamma_1 - 2}\sum F^{ii}
              - C |\nabla u|^{\gamma_2 - 2}.
\end{aligned}
\end{equation}

Suppose $|\lambda (U (x_0))| \geq R$ for $R$ sufficiently large.
As $\psi$ and $A$ are independent of $u$, by the comparison
principle $u \geq \ul u$ in $M$.
Consequently, we may apply Proposition~\ref{gj-lem3} to derive
a bound $|\nabla u (x_0)| \leq C$ from (\ref{g16}).

Suppose now that $|\lambda (U (x_0))| \leq R$ and
$|\nabla u (x_0)| \geq K$ for $K$ sufficiently large. Then there is
$C_2 > 0$ depending on $R$ and $K$ such that
\[ C_2^{-1} I \leq \{F^{ij}\} \leq C_2 I. \]
Since $\mathcal{L} (\ul u - u) \geq 0$, it follows from (\ref{g16})
that
$$
\begin{aligned}
 c_0 C_2^{-1} |\nabla \phi|^2
        - n C C_2 |\nabla u|^{\gamma_1-2} - C |\nabla u|^{\gamma_2-2} \leq 0.
\end{aligned}
$$
This proves $|\nabla u (x_0)| \leq C$.
\end{proof}

An alternative assumption
which is commonly used in deriving gradient estimate is the following
\begin{equation}
\label{3I-50}
f_{j} (\lambda) \geq \nu_{0} \Big(1 + \sum f_{i} (\lambda)\Big) \;\;
\mbox{if $\lambda_{j} < 0$}, \; \forall \, \lambda \in \Gamma^{\sigma}
\end{equation}
for any $\sigma > 0$ where $\nu_0  > 0$ depends on $\sigma$;
see e.g.
\cite{GS91}, \cite{Korevaar87}, \cite{LiYY91}, \cite{Trudinger90},
 and \cite{Urbas02}.

\begin{theorem}
\label{gj-th1b}
Assume, in addition, that $\gamma_1, \gamma_2 < 4$,
\eqref{3I-45}, \eqref{3I-50} hold, and that
\begin{equation}
\label{A5}
- \psi_z (x, z, p), \; p \cdot D_p \psi (x, z, p),
    \; - p \cdot D_p A^{\xi \xi} (x, p)/|\xi|^2
     \leq \bar{\psi}(x, z)  (1 + |p|^{\gamma}),
\end{equation}
\begin{equation}
\label{gj-G20**}
|A^{\xi \eta} (x, z, p)|
     \leq \bar{\psi} (x, z) |\xi||\eta| (1 + |p|^{\gamma}),
\;\; \forall \, \xi, \eta \in T^*\bM; \xi \perp \eta.
\end{equation}
for some function $\bar{\psi} \geq 0$ and constant $\gamma \in (0, 2)$.
There exists a constant $C_1$ depending on $|u|_{C^0 (\bM)}$ and other
known data such that \eqref{3I-R60} holds.
\end{theorem}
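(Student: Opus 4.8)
The plan is to carry out the interior gradient estimate in the spirit of the proofs of Theorems~\ref{gj-th1} and \ref{gj-th1a}, with assumption \eqref{3I-50} replacing the role played there by the subsolution. Set $w=|\nabla u|$, fix $\bar b = 1+\max_{\bM}u$, take $\phi(u)=-a\log(\bar b-u)$ with $a\in(0,\frac12)$ (to be fixed), and suppose $\log w+\phi(u)$ attains an interior maximum at $x_0\in M$ with $w(x_0)$ large; the goal is a contradiction. Here $\phi'>0$ and $\phi''-2(\phi')^2>0$ on the range of $u$, with $\phi'\ge \phi'_{\min}>0$ depending only on $a$ and $|u|_{C^0(\bM)}$. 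Choose an orthonormal frame at $x_0$ with $\nabla_{e_i}e_j=0$ and $\{U_{ij}(x_0)\}$ diagonal. The first-order condition $\nabla_i w=-w\phi'\nabla_i u$ forces $\nabla^2 u$ to have $\nabla u$ as an eigendirection with eigenvalue $-w^2\phi'$; hence, using \eqref{A2} and \eqref{A5} to bound $|A^{\xi\xi}|\le C|\xi|^2(1+|p|^{\gamma'})$ with $\gamma':=\max\{1,\gamma\}<2$,
\[ \min_i\lambda_i\le \frac{U(\nabla u,\nabla u)}{w^2}=-w^2\phi'+O(w^{\gamma'})\le -\tfrac12 w^2\phi'_{\min}<0 \]
once $w$ is large. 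So some eigenvalue of $U(x_0)$ is negative, and \eqref{3I-50} (applied on $\Gamma^{\sigma_0}$, where $\sigma_0>0$ is a lower bound for $\psi(\cdot,u,\nabla u)$) gives an index $r$ with $\lambda_r<0$ and $F^{rr}=f_r\ge \nu_0(1+\sum F^{ii})$.

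Next I would assemble the differential inequality at $x_0$. Multiplying the second-order condition by $F^{ii}$ and summing, and using $w\nabla_i w=\nabla_l u\,\nabla_{il}u$, the Ricci identity \eqref{hess-A70}, the expansion \eqref{gj-P10}, and the once-differentiated equation \eqref{gj-S60}, one obtains
\[ 0\ge \frac{1}{w^2}\sum_i F^{ii}\!\sum_l(\nabla_{il}u)^2+\frac{1}{w^2}\sum_i F^{ii}\,\nabla_l u\,\nabla_{iil}u+(\phi''-2(\phi')^2)\sum_i F^{ii}(\nabla_i u)^2+\phi'\Big(\sum f_i\lambda_i-\sum_i F^{ii}A^{ii}\Big). \]
Because $a<\frac12$ the third term is nonnegative, and by \eqref{3I-45} the term $\phi'\sum f_i\lambda_i\ge 0$; both may be discarded. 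The term $\nabla_l u\,\nabla_{iil}u$, after \eqref{hess-A70} and \eqref{gj-S60} together with $\nabla_l u\,\nabla_{lk}u=w\nabla_k w=-w^2\phi'\nabla_k u$, produces precisely the combinations $p\cdot\nabla_x A^{\xi\xi}+|p|^2A^{\xi\xi}_z$ and $p\cdot\nabla_x\psi+|p|^2\psi_z$ (controlled by \eqref{A1}) together with $p\cdot D_p A^{\xi\xi}$, $p\cdot D_p\psi$ (controlled by \eqref{A5}) and a curvature term of size $O(w^2)\sum F^{ii}$. Since the $p\cdot\nabla_x$, $|p|^2\partial_z$ contributions come divided by $w^2$, they are $O(w^{\gamma_1-2}+w^{\gamma_2-2})$, and the $D_p$-contributions are $O(w^{\gamma})$; by $\gamma_1,\gamma_2<4$ and $\gamma<2$ these are all $O(w^\beta)$ with $\beta<2$ (times $1+\sum F^{ii}$, or alone). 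Likewise $|A^{ii}|\le C(1+w^{\gamma'})$ gives $-\phi'\sum F^{ii}A^{ii}\ge -C(1+w^{\gamma'})\sum F^{ii}$.

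The decisive term is the first. In the $U$-diagonal frame $\sum_l(\nabla_{il}u)^2\ge(\nabla_{ii}u)^2=(\lambda_i-A^{ii})^2\ge\frac12\lambda_i^2-C(1+w^{2\gamma'})$, hence
\[ \frac{1}{w^2}\sum_i F^{ii}\!\sum_l(\nabla_{il}u)^2\ge \frac{1}{2w^2}f_r\lambda_r^2-C(1+w^{2\gamma'-2})\sum F^{ii}. \]
Now $f_r\ge\nu_0\sum F^{ii}$ and $\lambda_r^2\ge\frac14 w^4(\phi'_{\min})^2$, so the leading piece is $\ge c\,w^2\sum F^{ii}$ with $c>0$ depending on $\nu_0$ and $\phi'_{\min}$; moreover $\sum F^{ii}=\sum f_i\ge f_r\ge\nu_0>0$, so every lower-order term above is dominated by $C(1+w^\beta)\sum F^{ii}$ with $\beta<2$. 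Feeding all this into the inequality yields $0\ge(c\,w^2-C-Cw^\beta)\sum F^{ii}$, impossible for $w(x_0)$ large. Thus $w(x_0)\le C$, and comparing $\log w+\phi(u)$ at its maximum with its values on $\partial M$ gives \eqref{3I-R60}.

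I expect the main obstacles to be, first, ensuring that $\nu_0$ in \eqref{3I-50} stays uniform along the argument — equivalently that $f(\lambda(U(x_0)))=\psi(x_0,u,\nabla u)$ is bounded below by a fixed $\sigma_0>0$, which uses $\psi>0$ and the structure of $f$ and is automatic for the standard examples — and, second, the bookkeeping of the $|\nabla u|$-powers created by $A$ and $\psi$: one must verify that every such power is either divided by $w^2$ or multiplied only by $w^\gamma$ or $w^{\gamma'}$, so that all of them stay strictly below the order-$w^2$ good term furnished by \eqref{3I-50}; this is exactly what forces $\gamma_1,\gamma_2<4$ and $\gamma<2$. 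A further point of care is that $A$ grows in $|\nabla u|$, so one must track the difference between $U_{ij}$ and $\nabla_{ij}u$ throughout, using \eqref{gj-G20**} for the off-diagonal entries and \eqref{A2} for the diagonal ones.
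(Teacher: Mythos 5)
Your proposal is essentially correct and takes a genuinely different route from the paper's proof, which is worth noting.

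The paper uses the test function $\log w - a\log\phi$ with $\phi = -u+\sup_M u+1$, keeps the quadratic term $\frac{(a-a^2)}{\phi^2}F^{ii}|\nabla_i\phi|^2 = c_0 F^{ii}|\nabla_i u|^2$ as the decisive positive quantity, and \emph{discards} the Hessian-squared contribution $\big(\delta_{kl}-\tfrac{\nabla_k u\nabla_l u}{w^2}\big)\nabla_{ik}u\nabla_{il}u\ge 0$ from $w\nabla_{ii}w$. To make the good term large it then fixes an index $1$ with $\nabla_1 u\ge\frac1n|\nabla u|$, computes $U_{11}$ via the first-order condition and \eqref{gj-G20**} (this is where the off-diagonal bound is actually used, in \eqref{gj-G30}), and splits into cases: if $U_{11}\ge0$ the estimate follows from \eqref{gj-G30} directly, while if $U_{11}<0$ one invokes \eqref{3I-50} to get $F^{11}\ge\nu_0(1+\sum f_i)$ and concludes from \eqref{g18}. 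You instead keep $\tfrac{1}{w^2}F^{ii}\sum_l(\nabla_{il}u)^2$, discard the nonnegative $(\phi''-2(\phi')^2)F^{ii}|\nabla_i u|^2$ (using the log-barrier with $a<\tfrac12$), and observe that the first-order condition forces $\nabla u$ to be an eigenvector of $\nabla^2 u$ with eigenvalue $-w^2\phi'$, so $\min_i\lambda_i(U)\le U(\tfrac{\nabla u}{w},\tfrac{\nabla u}{w})\le -\tfrac12 w^2\phi'_{\min}$ once $w$ is large; \eqref{3I-50} then applies automatically to the index $r$ realizing the minimum, $f_r(\nabla_{rr}u)^2/w^2\gtrsim w^2(1+\sum f_i)$, and no case analysis is needed. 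This is a clean and slightly more robust variant; notably, as far as I can see it never requires the off-diagonal condition \eqref{gj-G20**} (your last paragraph mentions it, but your actual computations only involve $A^{\xi\xi}$ and $A^{rr}$, which are diagonal quantities controlled by \eqref{A2} and \eqref{A5}), so your argument appears to prove the statement under strictly weaker hypotheses. You also correctly flag the one genuine subtlety shared by both proofs: \eqref{3I-50} requires a fixed $\sigma_0>0$ with $\psi\ge\sigma_0$ along the solution, which is not a stated hypothesis of the theorem (it holds for the standard $f$ and is implicit throughout the literature, but it should be acknowledged). Overall the argument is sound; the bookkeeping of the exponents $\gamma_1,\gamma_2-2<2$, $\gamma,\gamma'<2$ against the $w^2(1+\sum F^{ii})$ good term is exactly right.
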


\begin{proof}
In the proof of Theorem~\ref{gj-th1} we take
$\phi = - u + \sup_M u + 1$.
By the concavity of $A^{ii} (x, z, p)$ in $p$ we have
\begin{equation}
\label{gj-G20}
 A^{ii} = A^{ii} (x, u, \nabla u)
\leq A^{ii} (x, u, 0) + A^{ii}_{p_k} (x, u, 0) \nabla_k u.
\end{equation}
By assumption~\eqref{3I-45},
\begin{equation}
\label{g17}
- F^{ii} \nabla_{ii} \phi = F^{ii} \nabla_{ii} u = F^{ii} U_{ii} - F^{ii} A^{ii}
     \geq -F^{ii} A^{ii} \geq - C (1 + |\nabla u|) \sum F^{ii}.
\end{equation}
It follows from \eqref{g2}, \eqref{g3}, \eqref{g11'},
\eqref{g17}, \eqref{A1} and \eqref{A5} that for $a < 1$,
\begin{equation}
\label{g18}
\begin{aligned}
0 \geq & \frac{(a - a^2)}{\phi^2} F^{ii} |\nabla_{i} u|^2
       + \frac{\nabla_{l} u \nabla'_{l} \psi}{w^2} + \psi_u
           - \frac{a}{\phi} \psi_{p_k} \nabla_k u  \\
        & + \frac{a}{\phi} F^{ii} A^{ii}_{p_{k}} \nabla_{k} u
           -  F^{ii} A^{ii}_{u} - F^{ii} \frac{\nabla_l u \nabla'_l A^{ii}}{w^2}
            - C (1 + |\nabla u|)\sum F^{ii} \\
  \geq & c_0 F^{ii} |\nabla_{i} u|^2 - C |\nabla u|^{\gamma - 2}
             - C(1 + |\nabla u| + |\nabla u|^{\gamma - 2})  \sum F^{ii}.
\end{aligned}
\end{equation}

Without loss of generality we assume
$\nabla_{1} u (x_{0}) \geq \frac{1}{n} |\nabla u (x_{0})| > 0$.
Recall that $U_{ij} (x_0)$ is diagonal. By \eqref{g1}, \eqref{gj-G20}
and \eqref{gj-G20**} we derive
\begin{equation}
\label{gj-G30}
U_{11} = - \frac{a}{\phi} |\nabla u|^2 + A_{11}
 + \frac{1}{\nabla_1 u} \sum_{k \geq 2} \nabla_k u A^{1k}
 \leq - \frac{a}{\phi} |\nabla u|^2
      + C (1 + |\nabla u| + |\nabla u|^{\gamma-2}).
\end{equation}

If $U_{11} (x_0) \geq 0$ we obtain a bound $|\nabla u (x_0)| \leq C$ from
\eqref{gj-G30}. If $U_{11} (x_0) < 0$ then by \eqref{3I-50},
\[ f_{1} \geq \nu_{0} \Big(1 + \sum f_{i}\Big) \]
and a bound $|\nabla u (x_0)| \leq C$ follows from \eqref{g18}.
\end{proof}

\bigskip

\section{The Dirichlet Problem}
\label{gj-E}
\setcounter{equation}{0}

\medskip

We now turn to existence of solutions to the Dirichlet
problem~\eqref{3I-10} and \eqref{3I-10d}.
We first consider the special case
$A= A (x, p)$ and $\psi= \psi (x, p)$.

\begin{theorem}
\label{gj-th3}
Suppose $(M^n, g)$ is a compact Riemannian manifold of nonnegative sectional
curvature with smooth boundary $\partial M$,
$A = A (x, p)$ and $\psi = \psi (x, p)$ are smooth,  and
$\varphi \in C^{\infty} (\partial M)$.
Assume that \eqref{3I-20}-\eqref{3I-40},
\eqref{A2}, \eqref{3I-11s}, \eqref{3I-45}, \eqref{A1} and
\eqref{A6} hold for $\gamma_1, \gamma_2 < 2$ in \eqref{A1}.
Then there exists an admissible solution $u \in C^{\infty} (\bar{M})$ of
equation~\eqref{3I-10} satisfying the boundary condition \eqref{3I-10d}.
\end{theorem}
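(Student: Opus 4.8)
The plan is to use the standard continuity method together with degree theory, exactly as indicated in the introduction. Since $A$ and $\psi$ depend only on $(x,p)$ and not on $z$, one has a comparison principle at one's disposal, which removes many of the technical headaches. The first step is to set up a one-parameter family of Dirichlet problems connecting the given equation to one whose solvability is known. A natural choice is to interpolate via $f(\lambda(\nabla^2 u + A[u])) = t\,\psi(x,\nabla u) + (1-t)\psi_0(x)$ together with boundary data $t\varphi + (1-t)\ul u|_{\partial M}$, or alternatively to deform $A$ and $\psi$ toward the data associated with the subsolution $\ul u$; in either case $t=0$ is solvable (e.g. $u=\ul u$ solves the $t=0$ problem after suitable normalization) and $t=1$ is the problem of interest. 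One must check that $\ul u$ remains a subsolution along the family and that the structure conditions \eqref{3I-20}--\eqref{3I-40}, \eqref{A2}, \eqref{A1}, \eqref{A6} are preserved; this is routine given the hypotheses.

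The heart of the argument is the set of \emph{a priori} estimates in $C^{2,\alpha}(\bM)$, uniform in $t$, for admissible solutions. The $C^0$ bound follows from the comparison principle: $\ul u \leq u$ from below (using that $\ul u$ is a subsolution and $A,\psi$ are independent of $z$), and an upper barrier constructed from a supersolution or from $\sup_{\partial M}\varphi$ plus a suitable function. The $C^1$ bound comes from Theorem~\ref{gj-th1a}, whose hypotheses — nonnegative sectional curvature, $\gamma_1,\gamma_2<2$, \eqref{A6}, and existence of the subsolution — are precisely what is assumed here. The $C^2$ bound then follows from Theorem~\ref{gj-th2}: the global estimate \eqref{hess-a10} uses \eqref{A2}, \eqref{A4} (trivially satisfied since $A,\psi$ are $z$-independent, so $A^{\xi\xi}_z = \psi_z = 0$), \eqref{gj-I100}, and the subsolution; the boundary estimate \eqref{hess-a10b} additionally uses \eqref{3I-45}. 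One must verify \eqref{gj-I100} for the given $f$ — but this is a hypothesis built into the standing assumptions of Theorem~\ref{gj-th2}, which we are invoking. Once $\|u\|_{C^2(\bM)}$ is bounded, conditions \eqref{3I-20} and \eqref{3I-40} make the equation uniformly elliptic on the relevant compact subset of $\Gamma$, and the concavity \eqref{3I-30} lets us apply the Evans--Krylov theorem (interior and up to the boundary) to obtain a uniform $C^{2,\alpha}(\bM)$ bound; Schauder theory then upgrades this to $C^{\infty}$.

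With uniform $C^{2,\alpha}$ estimates in hand, the set $T$ of $t\in[0,1]$ for which the $t$-problem has an admissible solution is nonempty (contains $0$) and closed (by the estimates and Arzel\`a--Ascoli, noting the limit remains admissible because the estimates keep $\lambda(U)$ in a fixed compact subset of $\Gamma$ away from $\partial\Gamma$). Openness follows from the implicit function theorem: linearization of the equation at an admissible solution is the operator $\mathcal L$ of \eqref{gj-lin} plus a zeroth-order term; since $A$ and $\psi$ do not depend on $z$ and, more importantly, the sign conditions give the zeroth-order coefficient the right sign (or one works on the space of functions vanishing on $\partial M$ where the maximum principle applies directly), $\mathcal L$ is invertible between the appropriate H\"older spaces. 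Hence $T=[0,1]$ and in particular $1\in T$, giving the desired $u\in C^{\infty}(\bM)$.

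\textbf{Main obstacle.} The delicate point is not any single estimate — those are quoted from Theorems~\ref{gj-th1a} and \ref{gj-th2} — but rather ensuring that the \emph{continuity path itself} is admissible throughout: one needs the deformed equations to retain an admissible subsolution with the same boundary values, to keep satisfying \eqref{A1}, \eqref{A6} with the same exponents, and to remain genuinely elliptic (i.e. keep $\lambda(U)\in\Gamma$) along the whole family, so that the $t$-uniform constants in \eqref{3I-R60} and \eqref{hess-a10}--\eqref{hess-a10b} are legitimate. Equivalently, one must choose the homotopy carefully enough that all structural hypotheses degrade continuously and never fail; with the natural linear interpolations this is a verification rather than a real difficulty, but it is where the argument must be written with care. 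A secondary technical point is the openness step: if the linearized operator has a zeroth-order term of unfavorable sign one cannot invoke Fredholm theory directly, so one either arranges the homotopy so this term vanishes or is nonpositive, or replaces the implicit function theorem argument by a Leray--Schauder degree computation (as the introduction suggests), using the \emph{a priori} estimates to show the degree is constant and nonzero along the path.
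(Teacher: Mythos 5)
Your proposal follows essentially the same route as the paper: $C^0$ and boundary gradient bounds from the maximum/comparison principle (using the $z$-independence of $A$ and $\psi$), global gradient from Theorem~\ref{gj-th1a}, $C^2$ bounds from Theorem~\ref{gj-th2}, Evans--Krylov for $C^{2,\alpha}$, Schauder for higher regularity, and then the standard continuity method. The paper states all of this in a few lines and leaves the continuity step as ``standard,'' whereas you spell out the homotopy, closedness, and openness; that extra detail is correct, and your observation that one must verify the structure conditions persist along the path is the right thing to flag. One small point worth noting: the statement of Theorem~\ref{gj-th3} as written does not list \eqref{gj-I100} among its hypotheses, yet both Theorem~\ref{gj-th2} and Theorem~\ref{gj-th1a} (via its condition (iii)) require it, so \eqref{gj-I100} must in fact be assumed here --- your remark that it is ``built into'' Theorem~\ref{gj-th2} hints at this, but it would be cleaner to say outright that \eqref{gj-I100} needs to be added to the hypotheses of Theorem~\ref{gj-th3}.
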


As $A$ and $\psi$ are assumed to be independent of $u$
in Theorem~\ref{gj-th3}, by the maximum principle it is easy to derive
the estimate
\begin{equation}
\label{gj-I115}
 \max_{\bM} |u| + \max_{\partial M} |\nabla u| \leq C.
\end{equation}
By Theorems~\ref{gj-th2} and \ref{gj-th1a}
we obtain
\begin{equation}
\label{gj-I120}
|u|_{C^2 (\bM)} \leq C.
\end{equation}
From \eqref{3I-40} and the fact that $\psi > 0$  we see that
equation~\eqref{3I-10} becomes
uniformly elliptic for admissible solutions satisfying
\eqref{gj-I120}.
Consequently, the concavity condition \eqref{3I-30} allows us to apply
Evans-Krylov theorem in order to obtain
$C^{2, \alpha}$ estimates; and higher order estimates follow from the
Schauder theory.
Theorem~\ref{gj-th3} may then be proved using the standard continuity
method.

\begin{theorem}
\label{gj-th5}
Let $(M^n, g)$ be a compact Riemannian manifold with smooth
boundary $\partial M$. Suppose $A = A (x, p)$, $\psi = \psi (x, z, p)$ are
smooth and $\varphi \in C^{\infty} (\partial M)$.
In addition to \eqref{3I-20}-\eqref{3I-40}, \eqref{A2}-\eqref{3I-11s},
\eqref{3I-45}, \eqref{A1}, \eqref{3I-50} and \eqref{A5}, assume that
\begin{equation}
\label{gj-G20*}
|A^{\xi \eta} (x, p)|
     \leq \bar{\psi} (x) |\xi||\eta| (1 + |p|^{\gamma}),
\;\; \forall \, \xi, \eta \in T^*\bM, \xi \perp \eta
\end{equation}
for some function $\bar{\psi} \geq 0$ and constant $\gamma \in (0, 2)$.
Then the Dirichlet problem~\eqref{3I-10} and \eqref{3I-10d} admits
an admissible solution $u \in C^{\infty} (\bar{M})$ satisfying $u \geq \ul u$
on $\bM$.
\end{theorem}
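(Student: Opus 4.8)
The plan is to derive Theorem~\ref{gj-th5} from the a priori estimates of the previous sections by the method of continuity and the topological degree, as already indicated for Theorem~\ref{gj-th3}. The essential new difficulty is that $\psi$ depends on $z = u$; condition~\eqref{A4}, which is needed for the a priori estimates, forces $\psi$ to be \emph{non-increasing} in $z$ --- the ``wrong'' sign for a comparison principle --- so the $C^0$ bound and the inequality $u \ge \ul u$ must be obtained through the subsolution and barriers rather than by monotonicity.

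First I would set up the homotopy. Fix a small $\epsilon > 0$ and a function $g \in C^{\infty}(\bM)$ of $x$ alone with $g \le F (U[\ul u])$ and $g$ uniformly close to $F(U[\ul u])$, and consider for $t \in [0,1]$ the problems
\begin{equation*}
F (U[u^t]) = \psi^t (x, u^t, \nabla u^t) \quad \text{in } M, \qquad u^t = \varphi \quad \text{on } \partial M,
\end{equation*}
where $\psi^t := t\psi + (1-t)(g - \epsilon)$. Since each structural and growth condition in the hypotheses is stable under affine combination with a function of $x$, $\psi^t$ satisfies \eqref{3I-20}--\eqref{3I-40}, \eqref{A2}, \eqref{A4}, \eqref{3I-45}, \eqref{A1}, \eqref{3I-50}, \eqref{A5}, \eqref{gj-G20*} uniformly in $t$, with $\psi^t > 0$ for $\epsilon$ small and $t$ near $1$; moreover $\ul u$ is a subsolution of the $t$-problem for every $t$ --- which reduces to $t(F(U[\ul u]) - \psi(x, \ul u, \nabla\ul u)) \ge -(1-t)\epsilon$, true by \eqref{3I-11s} --- and a \emph{strict} subsolution for $0 \le t < 1$. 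At $t = 0$ the equation is $z$-independent, hence classically solvable (a Hessian-type equation with fixed right-hand side admitting a strict subsolution), with $u^0 \ge \ul u$ by the comparison principle; the linearization at $u^0$, namely $v \mapsto F^{ij}(\nabla_{ij} v + A^{ij}_{p_k}\nabla_k v)$, has no zeroth order term and is therefore invertible on the Dirichlet problem, so $u^0$ is a nondegenerate zero and the degree at $t = 0$ is $\pm 1$.

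Next I would collect the uniform a priori estimates for admissible solutions $u^t$ of the $t$-problem satisfying $u^t \ge \ul u$: the $C^0$ bound and the bound for $\nabla u^t$ on $\partial M$ from the maximum principle and barriers built from $\ul u$ and $\varphi$ (using \eqref{3I-40}, $\psi^t > 0$ and $\partial_z\psi^t \le 0$), as in Theorem~\ref{gj-th3}; the interior gradient bound from Theorem~\ref{gj-th1b}; the global and boundary bounds for $\nabla^2 u^t$ from Theorem~\ref{gj-th2}; and then, the equation being uniformly elliptic (by \eqref{3I-40} and $\psi^t > 0$) and concave (by \eqref{3I-30}), the $C^{2,\alpha}(\bM)$ bound from the Evans--Krylov theorem and all higher bounds from Schauder theory, uniformly in $t$, with $\lambda(U[u^t])$ confined to a fixed compact subset of $\Gamma$. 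I would also verify that such a solution cannot touch $\ul u$ in the interior: if $u^t(x_0) = \ul u(x_0)$ at an interior point $x_0$, then $w := u^t - \ul u \ge 0$, $w(x_0) = 0$, and subtracting the subsolution inequality from the equation --- writing $F(U[u^t]) - F(U[\ul u]) = \hat a^{ij}\nabla_{ij} w + \hat b^k\nabla_k w$ with $\{\hat a^{ij}\}$ positive definite by concavity of $F$, and $\psi^t(x, u^t, \nabla u^t) - \psi^t(x, \ul u, \nabla\ul u) = \hat c\, w + \hat d^k\nabla_k w$ with $\hat c \le 0$ since $\partial_z\psi^t \le 0$ --- gives $\hat a^{ij}\nabla_{ij} w + (\hat b^k - \hat d^k)\nabla_k w \le \hat c\, w \le 0$ near $x_0$, so $w \equiv 0$ by the strong maximum principle, contradicting that $\ul u$ is a strict subsolution when $t < 1$ (and for $t = 1$ the identity $u^1 \equiv \ul u$ simply exhibits a solution).

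Consequently the admissible solutions with $u^t \ge \ul u$ all lie in a fixed bounded open subset $\mathcal{O}$ of $\{v \in C^{2,\alpha}(\bM) : v = \varphi \text{ on }\partial M,\ v > \ul u \text{ in } M\}$ on which $\lambda(U[v])$ stays in a compact subset of $\Gamma$, and no solution of any $t$-problem meets $\partial\mathcal{O}$; hence the degree of the associated map over $\mathcal{O}$ is well-defined and independent of $t$, equal to $\pm 1$ by the $t = 0$ computation, so the $t = 1$ problem has an admissible solution $u \in \mathcal{O}$, i.e. a solution of \eqref{3I-10}--\eqref{3I-10d} with $u \ge \ul u$ on $\bM$, and $u \in C^{\infty}(\bM)$ by elliptic regularity and the smoothness of the data. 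The main obstacle is precisely the absent comparison principle: the $C^0$ bound and, above all, keeping the solution above $\ul u$ along the homotopy --- which is what makes Theorem~\ref{gj-th2} applicable --- have to be handled via the strict subsolution and barrier constructions; checking the uniform stability of all the structure and growth conditions along the homotopy is routine but necessary, while the genuinely deep analytic input is supplied by Theorems~\ref{gj-th2} and~\ref{gj-th1b}.
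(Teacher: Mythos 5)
The paper never actually writes out a proof of Theorem~\ref{gj-th5}: Section~\ref{gj-E} only observes that the existence results follow from the a priori estimates ``using the standard method of continuity and degree theory,'' and gives the barrier-plus-Evans--Krylov argument only for Theorem~\ref{gj-th3}. Your proposal is therefore a fleshing-out of the approach the paper describes rather than an alternative to it, and it is in the right spirit. In particular, your observation that \eqref{A4} gives $\psi_z \le 0$, which is the \emph{wrong} sign for the comparison principle, is exactly the subtlety that forces one to use degree theory (rather than continuity with uniqueness) and to keep the constraint $u \ge \ul u$ alive via the strong maximum principle; and your ``no interior touching'' argument --- writing $F(U[u^t])-F(U[\ul u]) \le \psi^t[u^t]-\psi^t[\ul u]$, absorbing the $\hat c w \le 0$ term, and invoking the strong maximum principle for an operator with no zeroth-order term --- is correct.

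The one place where the argument is genuinely incomplete is the base of the homotopy. With $\psi^t = t\psi + (1-t)(g-\epsilon)$, the $t=0$ problem $F(\nabla^2 u + A(x,\nabla u)) = g-\epsilon$ is still a fully nonlinear PDE with gradient-dependent $A$, and its solvability cannot simply be asserted as ``classically solvable with fixed right-hand side.'' You cannot invoke Theorem~\ref{gj-th3} either, since Theorem~\ref{gj-th5} does not assume nonnegative sectional curvature, $\gamma_1,\gamma_2<2$, or \eqref{A6}. What saves the day here is precisely that $\psi^0$ is $z$-independent: the linearization has no zeroth-order term, so uniqueness and the usual comparison principle \emph{do} hold, and a further (sub-)homotopy from the explicit solution $\ul u$ of $F(U[v]) = F(U[\ul u])$ to the $t=0$ problem goes through by the plain method of continuity, using Theorems~\ref{gj-th2} and~\ref{gj-th1b} for the a priori estimates. (Equivalently, one can collapse both homotopies into a single one $\psi^t = t\,\psi + (1-t)F(U[\ul u])$ with $u^0 = \ul u$ explicit, at the cost of some regularization since $F(U[\ul u])$ is only $C^0$.) You should make this base-case step explicit; once it is, the rest of your argument --- the uniform stability of \eqref{A1}, \eqref{3I-50}, \eqref{A5}, \eqref{gj-G20*} along the homotopy, the a priori $C^{2,\alpha}$ bounds, the nondegeneracy of the linearization at $t=0$, and the degree invariance --- is sound and matches what the authors intend.

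Two smaller remarks worth being explicit about. First, the $C^0$ upper bound with $\psi_z \le 0$ cannot be obtained by comparison with a supersolution of the equation (the sign is again wrong); it has to come from a ``hard'' barrier $\bar u$ with $\lambda(\nabla^2\bar u + A(x,\nabla\bar u)) \notin \overline\Gamma$, which exists because $\Gamma \subset \Gamma_1$ and because concavity of $A^{\xi\xi}$ in $p$ (condition~\eqref{A2}) gives $\operatorname{tr} A(x,p) \le C(1+|p|)$, so that a quasilinear barrier $\Delta\bar u + C(1+|\nabla\bar u|) \le 0$ suffices. Second, in setting up the degree you should take care that the initial solution $u^0$ sits in the \emph{open} set $\mathcal{O}$, which is why the strict-subsolution device (your $\epsilon>0$, or a smooth strict subsolution $\ul u_\delta$ close to $\ul u$) is needed and not merely cosmetic.
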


\bigskip

\small


\begin{thebibliography}{99}

\bibitem{Alexandrov56}
A. D. Alexandrov,
{\em Uniqueness theorems for surfaces in the large, I},
Vestnik Leningrad. Univ. {\bf 11} (1956), 5-17.

\bibitem{Andrews94}
B. Andrews,
{\em Contraction of convex hypersurfaces in Euclidean space},
Calc. Var. PDE  {\bf 2} (1994), 151--171.

\bibitem{CNS1}
L. A. Caffarelli, L. Nirenberg and J. Spruck,
{\em The Dirichlet problem for nonlinear second-order elliptic equations I.
Monge-Amp\`{e}re equations},
{ Comm. Pure Applied Math.} {\bf 37} (1984), 369--402.

\bibitem{CNS3}
L. A. Caffarelli, L. Nirenberg and J. Spruck,
{\em The Dirichlet problem for nonlinear second-order elliptic equations
 III: Functions of eigenvalues of the Hessians},
{ Acta Math.} {\bf 155} (1985), 261--301.


\bibitem{CGY02a}
A. Chang, M. Gursky and P. Yang,
{\em An equation of Monge-Amp\`ere type in conformal geometry, and
four-manifolds of positive Ricci curvature},
Ann. Math. (2) {\bf 155} (2002), 709-787.


\bibitem{ChenS05}
S.-Y. S. Chen,
{\em Local estimates for some fully nonlinear elliptic equations},
Int. Math. Res. Not. {\bf 2005} (2005) no. 55, 3403--3425.

\bibitem{CY76b}
S. Y. Cheng and S. T. Yau,
{\em On the regularity of the solution of the n-dimensional Minkowski problem},
Comm. Pure Applied Math. {\bf 29} (1976), 495--516.

\bibitem{Chern59}
S. S. Chern,
{\em Integral formulas for hypersurfaces in Euclidean space and their
applications to uniqueness theorems},
J. Math. Mech., {\bf 8} (1959), 947--955.

\bibitem{CW01}
K.-S. Chou and X.-J. Wang,
{\em A variational theory of the Hessian equation},
Comm. Pure Appl. Math. {\bf 54} (2001), 1029--1064.

\bibitem{GeW06}
Y.-X. Ge and G.-F. Wang,
{\em On a fully nonlinear Yamabe problem},
Ann. Sci. École Norm. Sup. (4) {\bf 39} (2006), 569--598.


\bibitem{Gerhardt96}
C. Gerhardt,
{\em Closed Weingarten hypersurfaces in Riemannian manifolds},
J. Differential Geometry  {\bf 43} (1996), 612--641.

\bibitem{Guan94}
B. Guan,
{\em The Dirichlet problem for a class of fully nonlinear elliptic equations},
{ Comm. Partial Diff. Equations} {\bf 19} (1994), 399--416.

\bibitem{Guan98a}
B. Guan,
{\em The Dirichlet problem for Monge-Amp\`ere equations in non-convex
domains and spacelike hypersurfaces of constant Gauss curvature},
Trans. Amer. Math. Soc. {\bf 350} (1998), 4955--4971.

\bibitem{Guan99a}
B. Guan,
{\em The Dirichlet problem for Hessian equations on Riemannian manifolds},
Calc. Var. PDE {\bf 8} (1999) 45--69.

\bibitem{Guan12a}
B. Guan,
{\em Second order estimates and regularity for fully nonlinear ellitpic
equations on Riemannian manifolds}, to appear in Duke Math. J.

\bibitem{Guan}
B. Guan,
{\em The Dirichlet problem for Hessian equations in general domains},
preprint.

\bibitem{GG02}
B. Guan and P.-F. Guan,
{\em Closed hypersurfaces of prescribed curvatures},
Ann. Math. (2) {\bf 156} (2002) 655--673.

\bibitem{GS91}
B.~Guan and J.~Spruck,
{\em Interior gradient estimates for solutions of prescribed curvature
equations of parabolic type},
Indiana Univ. Math. J. {\bf 40} (1991) 1471--1481.

\bibitem{GS93}
B.~Guan and J.~Spruck,
{\em Boundary value problem on $\bfS^n$ for surfaces of constant Gauss
 curvature},
{ Ann. Math.}  (2) {\bf 138} (1993), 601--624.



\bibitem{GLL12}
P.-F. Guan, J.-F. Li and Y.-Y. Li,
{\em Hypersurfaces of prescribed curvature measures},
Duke Math. J. {\bf 161} (2012), 1927--1942.

\bibitem{GL94}
P.-F. Guan and Y.-Y. Li,
{\em On Weyl problem with nonnegative Gauss curvature},
J. Differential Geometry {\bf 39} (1994), 331--342.

\bibitem{GL97}
P.-F. Guan and Y.-Y. Li,
{\em  $C^{1,1}$ Regularity for solutions of a problem of Alexandrov},
Comm. Pure Applied Math. {\bf 50} (1997), 789--811.

\bibitem{GM03}
P.-F. Guan and X.-N. Ma,
{\em The Christoffel-Minkowski problem. I. Convexity of solutions of a
Hessian equation},
Invent. Math. {\bf 151} (2003), 553--577.

\bibitem{GRW13}
P.-F. Guan, C.-Y. Ren and Z.-Z Wang,
{\em Global $C^2$ estimates for convex solutions of curvature equations},
to apear in Comm. Pure Applied Math.

\bibitem{GW03a}
P.-F. Guan and G.-F. Wang,
{\em Local estimates for a class of fully nonlinear equations arising from
conformal geometry},
Int. Math. Res. Not. {\bf 2003}, no.26, 1413--1432.

\bibitem{GW03b}
P.-F. Guan and G.-F. Wang,
{\em A fully nonlinear conformal flow on locally conformally flat manifolds}
J. Reine Angew. Math. {\bf 557}  (2003), 219--238.

\bibitem{GW98}
P.-F. Guan and X.-J. Wang,
{\em On a Monge-Amp\`ere equation arising in geometric optics},
J. Differential Geometry {\bf 48} (1998), 205--222.


\bibitem{GV07}
M. J. Gursky and J. A. Viaclovsky,
{\em  Prescribing symmetric functions of the eigenvalues of the Ricci
tensor},
Ann. Math. (2) {\bf 166} (2007), 475--531.

\bibitem{HZ95}
J.-X. Hong and C. Zuily,
{\em Isometric embedding of the 2-sphere with nonnegative curvature in $\bfR^3$},
Math. Z. {\bf 219} (1995), 323--334.


\bibitem{Ivochkina80}
N. M.~Ivochkina,
{\em The integral method of barrier functions and the Dirichlet problem
for equations with operators of the Monge-Amp\`ere type},
(Russian) Mat. Sb. (N.S.) 112 (1980), 193–-206;
English transl.: Math. USSR Sb. {\bf 40} (1981) 179–-192.

\bibitem{Ivochkina85}
N. M.~Ivochkina,
{\em Solution of the Dirichlet problem for certain equations of
 Monge-Amp\`ere type},
Mat. Sb. (N.S.) {\bf 128 (170)} (1985), 403--415.


\bibitem{ITW04}
N. M. Ivochkina, N. Trudinger and  X.-J. Wang,
{\em The Dirichlet problem for degenerate Hessian equations},
Comm. Partial Diff. Equations {\bf 29} (2004), 219--235.

\bibitem{Korevaar87}
N. J. Korevaar,
{\em A priori gradient bounds for solutions to elliptic Weingarten equations},
Ann. Inst. Henri Poincar\'e, Analyse Non Lin\'eaire {\bf 4} (1987), 405--421.

\bibitem{Krylov83}
N. V.~Krylov,
{\em Boundedly nonhomogeneous elliptic and parabolic equations in a domain},
{Izvestia Math. Ser.} {\bf 47} (1983), 75--108.

\bibitem{LL03}
A.-B. Li, and Y.-Y. Li,
{\em On some conformally invariant fully nonlinear equations},
Comm. Pure Appl. Math. {\bf 56} (2003), 1416--1464.

\bibitem{LL05}
A.-B. Li and Y.-Y. Li,
{\em On some conformally invariant fully nonlinear equations.
Part II: Liouville, Harnack and Yamabe},
Acta Math. {\bf 195} (2005), 117--154.



\bibitem{LiYY90}
Y.-Y. Li,
{\em Some existence results of fully nonlinear elliptic equations
of Monge-Amp\`ere type},
{ Comm. Pure Applied Math.}  {\bf 43} (1990), 233--271.

\bibitem{LiYY91}
Y.-Y. Li,
{\em Interior gradient estimates for solutions of certain
fully nonlinear elliptic equations},
J. Diff. Equations {\bf 90} (1991), 172--185.


\bibitem{MTW05}
X.-N. Ma, N. S. Trudinger, X.-J. Wang,
{\em Regularity of potential functions of the optimal transportation problem},
Arch. Rational Mech. Anal. {\bf 177} (2005), 151--183.

\bibitem{Nirenberg53}
L. Nirenberg,
{\em The Weyl and Minkowski problems in differential geometry in the large},
Comm. Pure Applied Math. {\bf 6} (1953), 337-394.

\bibitem{Pogorelov52}
A. V. Pogorelov,
{\em Regularity of a convex surface with given Gaussian curvature},
Mat. Sb.  {\bf 31} (1952), 88-103.

\bibitem{Pogorelov78}
A. V. Pogorelov,
{\em The Minkowski Multidimentional Problem},
Winston, Washington, 1978.

\bibitem{STW07}
W.-M. Sheng, N. Trudinger and X.-J. Wang,
{\em The Yamabe problem for higher order curvatures},
J. Differential Geometry {\bf 77} (2007), 515--553.

\bibitem{SUW04}
W.-M. Sheng, J. Urbas and X.-J. Wang,
{\em Interior curvature bounds for a class of curvature equations},
Duke J. Math. {\bf 123} (2004), 235--264.

\bibitem{Trudinger90}
N. S. Trudinger,
{\em The Dirichlet problem for the prescribed curvature equations},
Arch. National Mech. Anal. {\bf 111} (1990) 153--179.

\bibitem{Trudinger95}
N. S. Trudinger,
{\em On the Dirichlet problem for Hessian equations},
Acta Math. {\bf 175} (1995), 151--164.

\bibitem{Trudinger06}
N. S. Trudinger,
{\em Recent developments in elliptic partial differential equations
of Monge-Amp\`ere type}, ICM Madrid {\bf 3} (2006), 291--302.

\bibitem{TW99}
N. S. Trudinger and X.-J. Wang,
{\em Hessian measures. II.},
Ann. Math. (2) {\bf 150} (1999), 579–-604.

\bibitem{Urbas02}
J. Urbas,
{\em Hessian equations on compact Riemannian manifolds},
Nonlinear Problems in Mathematical Physics and Related Topics II 367--377,
Kluwer/Plenum, New York, 2002.

\bibitem{Viaclovsky00}
J. A. Viaclovsky,
{\em Conformal geometry, contact geometry, and the calculus of variations},
Duke Math. J.  {\bf 101} (2000), 283--316.

\bibitem{Wang94}
X.-J. Wang,
{\em A class of fully nonlinear elliptic equations and related functionals},
Indiana Univ. Math. J. {\bf 43} (1994), 25--54.

\end{thebibliography}
\end{document}